\newtheorem{theorem}{Theorem}
\newtheorem{lemma}[theorem]{Lemma}
\newtheorem{proposition}[theorem]{Proposition}
\theoremstyle{definition}
\theoremstyle{remark}
\newtheorem{remark}[theorem]{Remark}
\numberwithin{equation}{section}
\numberwithin{theorem}{section}
\def\A{{\mathcal A}}
\def\AA{{\mathbb A}}
\def\C{{\mathbb C}}
\def\CC{{\mathcal C}}
\def\F{{\mathbb F}}
\def\FF{{\mathcal F}}
\def\FFF{{\mathcal F}}
\def\G{{\mathcal G}}
\def\GCC{{\G\CC}}
\def\NGCC{\overline{\GCC}}
\def\L{{\mathcal L}}
\def\O{{\mathcal O}}
\def\P{{\mathcal P}}
\def\T{{\mathbb T}}
\def\TA{{\mathcal T}}
\def\UU{\overline{\A}}
\def\X{{\mathcal X}}
\def\Y{{\mathcal Y}}
\def\Z{{\mathbb Z}}
\def\b{\mathfrak b}
\def\fy{\varphi}
\def\gl{\mathfrak g\mathfrak l}
\def\n{\mathfrak n}
\def\one{\mathbf 1}
\def\phhi{{\varphi}}
\def\q{{\bf q}}
\def\wx{{\widetilde{\bf x}}}
\def\x{{\bf x}}
\def\Gr{\operatorname{Gr}}
\def\Mat{\operatorname{Mat}}
\def\Poi{{\{\cdot,\cdot\}}}
\def\Tr{\operatorname{Tr}}
\def\diag{\operatorname{diag}}
\def\rank{\operatorname{rank}}
\def\sign{{\operatorname{sign}}}
\def\:{{:\ }}
\begin{document}

\title[Generalized Cluster Structures Related to the Double of $GL_n$]
{Generalized Cluster Structures Related to the Drinfeld Double of $GL_n$}

\author{Misha Gekhtman}
\address{Department of Mathematics, University of Notre Dame, Notre Dame,
IN 46556}
\email{mgekhtma@nd.edu}

\author{Michael Shapiro}
\address{Department of Mathematics, Michigan State University, East Lansing,
MI 48823, and National Resarch University Higher School of Economics, Russia}
\email{mshapiro@math.msu.edu}

\author{Alek Vainshtein}
\address{Department of Mathematics \& Department of Computer Science, University of Haifa, Haifa,
Mount Carmel 31905, Israel}
\email{alek@cs.haifa.ac.il}

\begin{abstract}
We prove that the regular generalized cluster structure on the Drinfeld double of $GL_n$ constructed in \cite{GSVstaircase}
is complete and compatible with the standard Poisson--Lie structure on the double. Moreover, we show that for $n=4$ this
structure is distinct from a previously known regular generalized cluster structure on the Drinfeld double, 
even though they have the same compatible Poisson structure and the same collection of frozen variables. Further, we prove that
the regular generalized cluster structure on band periodic matrices constructed in \cite{GSVstaircase} possesses similar 
compatibility and completeness properties.
\end{abstract}

\maketitle

\medskip

\section{Introduction}

 It is by now well-known that many important algebraic varieties arising in Lie theory, representation theory and theory of integrable systems support a cluster structure. The first example of this kind is already present in the foundational paper \cite{FZ2} where it was shown that the homogeneous coordinate ring  of the Grassmannian of $2$-planes in $\mathbb{C}^{n+3}$ is naturally isomorphic to the cluster algebra of finite type $A_n$. Among the examples that followed were Grassmannians \cite{GSVb, Scott}, double Bruhat cells \cite{CAIII} and strata in flag varieties \cite{Leclerc}. All of these examples share two key features: (i) the variety in question is equipped with a Poisson brackets {\em compatible} with the cluster structure in a sense reviewed in Section 2.1 below, and (ii) cluster transformations that connect distinguished coordinate charts within a ring of regular functions are modeled on three-term relations such as short Pl\"ucker relations, Desnanot--Jacobi identities and their Lie-theoretic generalizations. The first feature led us to development of  an approach for constructing a cluster structure in Poisson varieties possessing a particular nice coordinate chart (see, e.g.,  \cite{GSVb}). However, there are situations when reliance on three-term relations (equivalently, usual cluster transformations) turns out to be too restrictive and when certain multinomial versions of cluster transformation are needed. These were  first considered in \cite{CS} and termed {\em generalized cluster transformations}. 
The first geometric example of this sort was studied in \cite{GSVCR,GSVDouble} where we used a more general form of transformations defined in \cite{CS} to construct an initial seed $\Sigma_n$ for a complete generalized cluster structure $\GCC_n^D$  in the standard Drinfeld double  $D(GL_n)$ 
 and proved that this structure is compatible with the standard Poisson--Lie structure on $D(GL_n)$.

In \cite[Section 4]{GSVstaircase} we presented a rich source of identities that can serve as generalized cluster transformation and, as one of the applications, constructed a different seed $\bar\Sigma_n$ for a regular generalized cluster structure $\NGCC_n^D$ on $D(GL_n)$. 
In this paper we prove that $\NGCC_n^D$ shares all the properties of $\GCC_n^D$: it is complete and
compatible with the standard Poisson--Lie structure on $D(GL_n)$.  Moreover, we prove that the 
seeds $\bar\Sigma_4(X,Y)$ and $\Sigma_4(Y^T,X^T)$ are not mutationally equivalent. This answers the question posed
by S. Keel: "Do there exist two different regular cluster structures on the same variety with
the same compatible Poisson bracket and the same collection of frozen variables?"
by providing an explicit example of two different regular complete generalized cluster structures on $D(GL_4)$ 
with the same compatible Poisson structure and the same collection of frozen variables.
Further, from the above properties of $\NGCC_n^D$ we derive that the generalized cluster structure in the ring of regular functions on band periodic matrices built in \cite[Section 5]{GSVstaircase} is complete and compatible with the restriction of the standard Poisson--Lie structure.  Apart from possible applications to cluster integrable systems, the latter generalized cluster structure is closely related to the conjectural ones  in cyclic symmetry loci in Grassmannians considered in the recent preprint \cite{FraserNew} that appeared while this paper was under review, and in the Grothendieck rings of the quantum affine algebras $U_q\widehat{\mathfrak{sl}_k}$ at roots of unity  \cite{Gleitz}. These connections will be explored in a joint work of M.G. with C.~Fraser and K.~Trampel.

Section \ref{prelim} below contains all necessary information about generalized cluster structures borrowed mainly from 
\cite{GSVstaircase} to make this text self-contained. Section \ref{doublestruct} is devoted to the study of $\NGCC_n^D$.
The initial seed is described in Section \ref{init}. The main result of this section is Theorem \ref{newstructure} which
claims that $\NGCC_n^D$ is  compatible with the standard Poisson--Lie structure on $D(GL_n)$ and complete. The former statement
is proved in Sections \ref{logcan} and \ref{compat}, and the latter in Section \ref{complet}. In Section \ref{twogcs}  we
compare two generalized cluster structures on $D(GL_4)$: $\NGCC_4^D$ and $\GCC_4^D(Y^T,X^T)$ described
in \cite{GSVDouble}. These two structures have the same set of frozen variables, and we prove that they are distinct, that is, 
the two seeds are not mutationally equivalent. Finally, Section \ref{bandmat} treats the case of periodic band matrices. 
The initial seed $\Sigma_{kn}$ for the
generalized cluster structure on the space $\L_{kn}$ of $(k+1)$-diagonal $n$-periodic band matrices 
is described in Section \ref{bandinit}. The main result of this section is 
Theorem \ref{Band_structure} which claims that $\GCC(\Sigma_{kn})$ is compatible with the restriction of the 
standard Poisson--Lie structure on $D(\Mat_n)$ and complete. The former statement is proved in Section \ref{bandcompat}, and
the latter in Section \ref{bandcomplet}.

\section{Preliminaries} \label{prelim}
\subsection{Generalized cluster structures}
Following \cite{GSVDouble}, we remind the definition of a generalized cluster structure represented by a quiver 
with multiplicities. Let $(Q,d_1,\dots,d_N)$ be
a quiver on $N$ mutable and $M$ frozen vertices with positive integer multiplicities $d_i$ at mutable vertices. 
A vertex is called {\it special\/} if its multiplicity is greater than~1. A frozen vertex is called {\it isolated\/}
if it is not connected to any other vertices. Let $\F$ be the field of rational functions in $N+M$ independent variables
with rational coefficients. There are $M$  distinguished variables corresponding to frozen vertices; 
they are denoted $x_{N+1},\dots,x_{N+M}$ and called {\em stable}, or {\em frozen\/} variables. The {\it coefficient group\/} is a free multiplicative abelian group of Laurent monomials in stable variables, 
and its integer group ring is $\bar\AA=\Z[x_{N+1}^{\pm1},\dots,x_{N+M}^{\pm1}]$ (we write
$x^{\pm1}$ instead of $x,x^{-1}$).

An {\em extended seed\/} (of {\em geometric type\/}) in $\F$ is a triple
$\Sigma=(\x,Q,\P)$, where $\x=(x_1,\dots,x_N, x_{N+1},\dots, x_{N+M})$ is a transcendence basis of $\F$ over the field of
fractions of  $\bar\AA$ and $\P$ is a set of $N$ {\em strings}. The $i$th string is a collection of 
monomials $p_{ir}\in\AA=\Z[x_{N+1},\dots,x_{N+M}]$, $0\le r\le d_i$, such that  
$p_{i0}=p_{id_i}=1$; it is called {\em trivial\/} if $d_i=1$, and hence both elements of the string are equal to one.
The monomials $p_{ir}$ are called {\em exchange coefficients}.

Given a seed as above, the {\em adjacent cluster\/} in direction $k$, $1\le k\le N$,
is defined by $\x'=(\x\setminus\{x_k\})\cup\{x'_k\}$,
where the new cluster variable $x'_k$ is given by the {\em generalized exchange relation}
\begin{equation}\label{exchange}
x_kx'_k=\sum_{r=0}^{d_k}p_{kr}u_{k;>}^r v_{k;>}^{[r]}u_{k;<}^{d_k-r}v_{k;<}^{[d_k-r]};
\end{equation}
here $u_{k;>}$ and $u_{k;<}$, $1\le k\le N$, are %{\em cluster $\tau$-monomials\/} 
defined by
\begin{equation*}
u_{k;>}=\prod_{k\to i\in Q} x_i,\qquad  u_{k;<}=\prod_{i\to k \in Q}x_i,
\end{equation*}
where the products are taken over all edges between $k$ and mutable vertices,
and {\em stable $\tau$-monomials\/}
$v_{k;>}^{[r]}$ and $v_{k;<}^{[r]}$, $1\le k\le N$, $0\le r\le d_k$, defined by
\begin{equation}\label{stable}
v_{k;>}^{[r]}=\prod_{N+1\le i\le N+M}x_i^{\lfloor rb_{ki}/d_k\rfloor},\qquad
v_{k;<}^{[r]}=\prod_{N+1\le i\le N+M}x_i^{\lfloor rb_{ik}/d_k\rfloor},
\end{equation}
where $b_{ki}$ is the number of edges from $k$ to $i$ and $b_{ik}$ is the number of edges from $i$ to $k$; 
here, as usual, the product over the empty set is assumed to be
equal to~$1$.  In what follows we write $v_{k;>}$ instead of $v_{k;>}^{[d_k]}$ and
$v_{k;<}$ instead of $v_{k;<}^{[d_k]}$.
The right hand side of~\eqref{exchange} is called a {\it generalized exchange polynomial}.

The standard definition of the {\it quiver mutation\/} in direction $k$ is modified as follows: if both vertices $i$ and $j$
in a path $i\to k\to j$ are mutable, then this path contributes $d_k$ edges $i\to j$ to the mutated quiver $Q'$; if one of the vertices $i$ or $j$ is frozen then the path contributes $d_j$ or $d_i$ edges $i\to j$ to $Q'$. The multiplicities at the vertices do not change. Note that isolated vertices remain isolated in $Q'$.

The {\em exchange coefficient mutation\/} in direction $k$ is given by
\begin{equation}
\label{CoefMutation}
 p'_{ir}=\begin{cases}
          p_{i,d_i-r}, & \text{if $i=k$;}\\
           p_{ir}, &\text{otherwise.}
        \end{cases}
\end{equation}

Given an extended seed $\Sigma=(\x,Q,\P)$, we say that a seed
$\Sigma'=(\x',Q',\P')$ is {\em adjacent\/} to $\Sigma$ (in direction
$k$) if $\x'$, $Q'$ and $\P'$ are as above. 
Two such seeds are {\em mutation equivalent\/} if they can
be connected by a sequence of pairwise adjacent seeds. 
The set of all seeds mutation equivalent to $\Sigma$ is called the {\it generalized cluster structure\/} 
(of geometric type) in $\F$ associated with $\Sigma$ and denoted by $\GCC(\Sigma)$.

Fix a ground ring $\widehat{\AA}$ such that $\AA\subseteq\widehat\AA\subseteq\bar\AA$. The
{\it generalized upper cluster algebra\/}
$\UU(\GCC)=\UU(\GCC(\Sigma))$ is the intersection of the rings of Laurent polynomials over $\widehat{\AA}$ in cluster variables taken over all seeds in $\GCC(\Sigma)$. Let $V$ be a quasi-affine variety over $\C$, $\C(V)$ be the field of rational functions on $V$, and $\O(V)$ be the ring of regular functions on $V$. A generalized cluster structure $\GCC(\Sigma)$
in $\C(V)$ is an embedding of $\x$ into $\C(V)$ that can be extended to a field isomorphism between $\F_\C=\F\otimes\C$ and 
$\C(V)$. 
It is called {\it regular on $V$\/} if any cluster variable in any cluster belongs to $\O(V)$, and {\it complete\/} if 
$\UU(\GCC)$ tensored with $\C$ is isomorphic to $\O(V)$. The choice of the ground ring is discussed 
in~\cite[Section 2.1]{GSVDouble}.

Let $\Poi$ be a Poisson bracket on the ambient field $\F$, and $\GCC$ be a generalized cluster structure in $\F$. 
We say that the bracket and the generalized cluster structure are {\em compatible\/} if any extended
cluster $\widetilde{\x}=(x_1,\dots,x_{N+M})$ is {\em log-canonical\/} with respect to $\Poi$, that is,
$\{x_i,x_j\}=\omega_{ij} x_ix_j$,
where $\omega_{ij}\in\Z$ are constants for all $i,j$, $1\le i,j\le N+M$.

For any mutable vertex $k\in Q$ define the $y$-variable
\begin{equation}\label{yvar}
y_k=\frac{u_{k;>}^{d_k}v_{k;>}}{u_{k;<}^{d_k}v_{k;<}}.
\end{equation}
The following statement is an immediate corollary of \cite[Proposition 2.5]{GSVDouble}.

\begin{proposition}\label{compatchar}
Assume that for any mutable vertex $j\in Q$
\[
\{\log x_i,\log y_j\}=\lambda d_j\delta_{ij}\quad\text{for any $i\in Q$,}
\]
where $\lambda$ is a rational number not depending on $j$, $\delta_{ij}$ is the Kronecker symbol, 
and all Laurent monomials 
\[
\hat p_{kr}=\frac{\left(p_{kr}v_{k;>}^{[r]}v_{k;<}^{[d_k-r]}\right)^{d_k}}{v_{k;>}^r v_{k;<}^{d_k-r}} 
\]
are Casimirs of the bracket $\Poi$.
Then the bracket $\Poi$ is compatible with $\GCC(\Sigma)$.
\end{proposition}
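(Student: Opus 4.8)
The plan is to obtain the statement as an immediate consequence of \cite[Proposition 2.5]{GSVDouble}. In essence that proposition is a sufficient condition for a Poisson bracket $\Poi$ on $\F$ to be compatible with a generalized cluster structure represented by a quiver with multiplicities: granted that the extended cluster $\widetilde{\x}$ of the initial seed is log-canonical, $\Poi$ is compatible with $\GCC(\Sigma)$ as soon as (a) the Poisson coefficient matrix $\Omega=(\omega_{ij})$ is paired with the exchange data of $Q$ in a prescribed, multiplicity-normalized way --- a relation one may phrase through the $y$-variables \eqref{yvar} --- and (b) the Laurent monomials $\hat p_{kr}$ are Casimirs of $\Poi$. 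The proof there propagates log-canonicity from the initial seed to every seed by induction along mutations, the Casimir property of the $\hat p_{kr}$ being exactly what prevents the generalized exchange relation \eqref{exchange} from destroying log-canonicity. So the whole argument comes down to matching the two hypotheses above to conditions (a) and (b).

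The first step is to unwind the $y$-variables. Using \eqref{yvar} together with \eqref{stable} (for $r=d_j$ the floors are vacuous, since the relevant edge counts are integers) one finds that $\log y_j$ is the integer linear combination of $\log x_1,\dots,\log x_{N+M}$ in which the coefficient of $\log x_i$ equals $d_j(b_{ji}-b_{ij})$ when $i$ is a mutable vertex and $b_{ji}-b_{ij}$ when $i$ is frozen. Consequently, once the initial cluster is log-canonical, $\{\log x_i,\log y_j\}$ is the same linear combination of the entries $\omega_{il}=\{\log x_i,\log x_l\}$, and the hypothesis $\{\log x_i,\log y_j\}=\lambda d_j\delta_{ij}$ (for every mutable $j$ and every $i\in Q$) says precisely that the exchange matrix of $Q$, twisted by the multiplicities, pairs with $\Omega$ to yield $\lambda$ times a diagonal matrix on the mutable block and zero on the frozen block. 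This is condition (a): the rational number $\lambda$, independent of $j$, is the required global proportionality constant, and the factors $d_j$ on the right-hand side are exactly the normalization dictated by the multiplicities --- the same $d_j$ that already enters \eqref{yvar} as the exponent of $u_{j;>}$ and $u_{j;<}$. Condition (b) is imposed verbatim, so \cite[Proposition 2.5]{GSVDouble} applies and gives the compatibility of $\Poi$ with $\GCC(\Sigma)$.

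No step here is genuinely difficult; the work lies entirely in unwinding the definitions, and the one point to watch is the bookkeeping of the multiplicities $d_j$ --- one should check that the $d_j$ built into $y_j$ through \eqref{yvar} and the $d_j$ appearing in $\lambda d_j\delta_{ij}$ are literally the ones used in the normalization of \cite[Proposition 2.5]{GSVDouble}, so that no spurious factor creeps in. It is also worth recording that the deduction uses log-canonicity of the initial extended cluster (already needed for $\{\log x_i,\log y_j\}$ to be a well-defined constant); for the generalized cluster structures treated in this paper this is verified separately, for instance in Section~\ref{logcan}.
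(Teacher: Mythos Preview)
Your proposal is correct and follows exactly the route the paper takes: the statement is recorded there as ``an immediate corollary of \cite[Proposition 2.5]{GSVDouble}'' with no further argument, and your write-up simply unpacks why the hypothesis on $\{\log x_i,\log y_j\}$ matches the compatibility condition of that proposition. Your closing remark that log-canonicity of the initial extended cluster is an implicit prerequisite is apt and worth keeping.
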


The notion of compatibility  extends to Poisson brackets on $\F_\C$ without any changes.

 Fix an arbitrary extended cluster
$\wx=(x_1,\dots,x_{N+M})$ and define a {\it local toric action\/} of rank $s$ as a map 
$\TA^W_{\q}:\F_\C\to\F_\C$ given on the generators of $\F_\C=\C(x_1,\dots,x_{N+M})$ by the formula 
\begin{equation}
\TA^W_{\q}(\wx)=\left ( x_i \prod_{\alpha=1}^s q_\alpha^{w_{i\alpha}}\right )_{i=1}^{N+M},\qquad
\q=(q_1,\dots,q_s)\in (\C^*)^s,
\label{toricact}
\end{equation}
where $W=(w_{i\alpha})$ is an integer $(N+M)\times s$ {\it weight matrix\/} of full rank, and extended naturally to 
the whole $\F_\C$. 

Let $\wx'$ be another extended cluster in $\GCC$, then the corresponding local toric action defined by the weight matrix $W'$
is {\it compatible\/} with the local toric action \eqref{toricact} if it commutes with the sequence of (generalized) cluster transformations that takes $\wx$ to $\wx'$. If local toric actions at all clusters are compatible, they define a {\it global toric action\/} $\TA_{\q}$ on $\F_\C$ called a 
$\GCC$-extension of the local toric action \eqref{toricact}.  As shown in \cite[Section 5.2]{GSVb}, for a global toric action to be well-defined, it suffices that local toric actions at all seeds adjacent to the initial one are compatible. The following statement is equivalent to~\cite[Proposition 2.6]{GSVDouble}.

\begin{proposition}\label{globact}
The local toric action~\eqref{toricact} is uniquely $\GCC$-extendable
to a global action of $(\C^*)^s$  if  all $y$-variables $y_k$ and all Casimirs $\hat p_{ir}$
are invariant under~\eqref{toricact}.
 \end{proposition}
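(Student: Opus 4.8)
The plan is to prove Proposition~\ref{globact} by reducing the global extendability to a purely local compatibility check, and then verifying that check using the invariance hypotheses together with the explicit form of the generalized exchange relation~\eqref{exchange}. The only thing to show is that for any single mutation $\Sigma\to\Sigma'$ in direction $k$, the local toric action at $\Sigma$ with weight matrix $W$ is carried by the mutation to a well-defined local toric action at $\Sigma'$; once a single-step version is established, iterating along any mutation path produces a compatible family at all clusters, and the resulting global action is unique because it is determined on the initial cluster and the clusters generate $\F_\C$. So the heart of the matter is a one-mutation statement.

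\medskip

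\textbf{Key steps.} First I would fix the mutation direction $k$ and write down how the toric action must transform. For $i\ne k$ the new cluster variable is the old one, so its weight row $w_{i\alpha}$ is forced to be unchanged; the only freedom is the weight row assigned to $x_k'$. Define a candidate $(N+M)\times s$ matrix $W'$ by keeping all rows $i\ne k$ and setting the $k$th row to $w'_{k\alpha}=-w_{k\alpha}+\sum_{i}\mu^{(k)}_{i}w_{i\alpha}$ for the appropriate integer combination $\mu^{(k)}$ read off from~\eqref{exchange} (concretely, the exponents of the monomial $u_{k;>}^{d_k}v_{k;>}$, i.e.\ the numerator of $y_k$, since the exchange polynomial's two extreme terms are $u_{k;<}^{d_k}v_{k;<}$ and $p_{k0}\cdot$nothing versus $u_{k;>}^{d_k}v_{k;>}$). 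Second, I would check that with this $W'$ the map $\TA^{W'}_{\q}$ is indeed a toric action, i.e.\ $W'$ still has full rank: this follows because the transformation $W\mapsto W'$ is invertible over $\Z$ (it is its own inverse on the $k$th row up to the correction term, exactly mirroring the involutivity of mutation). Third — and this is where the hypotheses enter — I would verify that $\TA^{W'}_{\q}$ applied to~\eqref{exchange} is consistent: apply $\TA^W_{\q}$ to both sides of $x_k x_k' = (\text{exchange polynomial})$. The left side scales by $q^{w_{k\alpha}}q^{w'_{k\alpha}}$; the right side, being a sum of monomials in the $x_i$ ($i\ne k$), scales monomial-by-monomial, and the requirement that \emph{every} term scale the same way is precisely the statement that $y_k$ and all the Casimirs $\hat p_{kr}$ are toric-invariant. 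Indeed, dividing the general term in~\eqref{exchange} by the $r=0$ term gives, up to a Casimir factor $\hat p_{kr}$ (raised to an appropriate power), a power of $y_k$; invariance of $y_k$ and of $\hat p_{kr}$ forces all terms to have the same weight, hence the right side scales homogeneously, and matching exponents yields exactly the formula for $w'_{k\alpha}$ above. This shows $\TA^W_{\q}$ and $\TA^{W'}_{\q}$ intertwine the mutation, which is the definition of compatibility.

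\medskip

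\textbf{Assembling and uniqueness.} Having the one-step statement, I would argue that along any finite mutation sequence $\wx=\wx^{(0)}\to\wx^{(1)}\to\cdots\to\wx^{(m)}=\wx'$ the inductively produced weight matrices $W=W^{(0)},W^{(1)},\dots,W^{(m)}=W'$ give local actions each intertwined with the next, so their composition intertwines the whole sequence; since the initial $y$-variables and Casimirs are invariant, and invariance of $y$-variables and Casimirs is itself preserved under mutation (they are cluster-structure data, as recorded in Proposition~\ref{compatchar} and~\cite[Proposition~2.5]{GSVDouble}), the hypotheses propagate and no obstruction arises at any step. This yields a global toric action $\TA_{\q}$. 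Uniqueness is immediate: any $\GCC$-extension must agree with $\TA^W_{\q}$ on the cluster $\wx$, and the values on $\wx$ determine the automorphism of $\F_\C$.

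\medskip

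\textbf{Main obstacle.} The genuinely nontrivial point is the homogeneity check in the third step: one must see that the ratio of the generic term to the extreme term in~\eqref{exchange} is controlled \emph{entirely} by a power of $y_k$ and the Casimirs $\hat p_{kr}$, with no leftover dependence. This requires unwinding the definitions~\eqref{stable}, \eqref{yvar} and the formula for $\hat p_{kr}$ and checking that the floor-function exponents reassemble correctly; it is the same bookkeeping that underlies Proposition~\ref{compatchar}, so I would phrase it as a direct appeal to~\cite[Proposition~2.5]{GSVDouble} rather than redoing it, noting that the statement to be proved is declared in the excerpt to be equivalent to that result. Everything else — full rank of $W'$, involutivity, propagation of the hypotheses, uniqueness — is routine.
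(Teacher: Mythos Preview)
Your proposal is correct, but there is nothing in the paper to compare it against: the paper does not prove Proposition~\ref{globact} at all, it simply declares the statement equivalent to \cite[Proposition~2.5]{GSVDouble} and moves on. What you have written is a faithful reconstruction of the argument behind that cited result --- the reduction to a single mutation step, the definition of $W'$ via the weight of the top monomial in the exchange polynomial, and the homogeneity check via $(\text{ratio})^{d_k}=\hat p_{kr}\,y_k^{\,r}$ --- and you even acknowledge at the end that the paper's own ``proof'' is just this citation. So your approach is not different from the paper's; it is more detailed than the paper's, which contains no argument here whatsoever.

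One small remark: your propagation step (that invariance of the $y$-variables and the $\hat p_{kr}$ survives mutation) is indeed necessary to iterate, and it does require a short check --- the $\hat p_{kr}$ involve only frozen variables with unchanged weights, while the new $y$-variables are Laurent monomials in the old ones, so their weights remain zero. You gesture at this by citing \cite[Proposition~2.5]{GSVDouble}, which is exactly what the paper does, so this is fine.
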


\section{The structure $\NGCC_n^D$} \label{doublestruct}

In this section, we provide a description of the seed $\bar\Sigma_n$ and prove that the corresponding generalized cluster structure $\NGCC_n^D$ is complete and compatible with the standard Poisson--Lie structure on $D(GL_n)$.

 First, we list some terms and notations that will be used in what follows. A notation $A_I^J$ is reserved for a submatrix of a matrix $A$ with a row set $I$ and a column set $J$. If $I$ (resp. $J$) is not specified, it is assumed that all rows (resp. columns) are selected. An interval notation $[i, j]$ is used for the index set $[i, i+1,\ldots, j]$. We call a submatrix or minor of $A$  {\em dense} if both its row and column sets are intervals. A dense minor of $A$ is called {\em trailing} if it contains the lower right entry of $A$.
 
\subsection{The initial seed} \label{init}
Let $(X,Y)\in D(GL_n) = GL_n\times GL_n$. Following \cite{GSVstaircase}, define an $N\times N$  matrix 
\begin{equation}\label{Phi}
\Phi=\Phi\left ( X, Y\right)=\left (
\begin{array}{cccc}
Y_{[2,n]} & & &  \\
X_{[2,n]} & Y_{[2,n]} & &  \\
 & \ddots &\ddots &   \\
 & & X_{[2,n]} & Y_{[2,n]}  \\
  & & & X_{[2,n]} 
 \end{array}
\right )
\end{equation}
with $N=(n-1)n$ and  put $\phhi_i=\det\Phi_{[i,N]}^{[i,N]}$ for  $1\le i\le N$. 
Further, put
$\det \left ( \lambda Y + \mu X\right ) = \sum_{i=0}^n c_i(X,Y) \mu^i \lambda^{n-i}$.

Next, we define $g_{ij}=\det X_{[i,n]}^{[j,j+n-i]}$ for $1\le j\le i\le n$, 
and, $h_{ij}=\det Y_{[i,i+n-j]}^{[j,n]}$ for $1\le i\le j\le n$; note that $\phhi_i=g_{i-N+n-1,i-N+n-1}$ for $i>N-n+1$.
The family ${\bar\FFF_n}$ of $2n^2$ functions in the ring of regular functions on $D(GL_n)$ is defined as
\[
\bar\FFF_n=\left\{\{\phhi_i\}_{i=1}^{N-n+1};\ \{g_{ij}\}_{1\le j\le i\le n};\ \{h_{ij}\}_{1\le i\le j\le n};\ 
\{\tilde c_i\}_{i=1}^{n-1}\right\}
\]
with $\tilde c_i(X,Y)=(-1)^{i(n-1)}c_i(X,Y)$ for $1\le i\le n-1$.

The corresponding quiver $\bar Q_n$ is defined below and illustrated, for the $n=4$ case, in Figure \ref{example_quiver}. 
It has $2n^2$ vertices corresponding to the functions in $\bar\FFF_n$. The $n-1$ vertices corresponding to 
$\tilde c_i(X,Y)$, $1\le i\le n-1$, are isolated; they are not shown. There are $2n$ frozen vertices corresponding to 
$g_{i1}$, $1\le i\le n$, and $h_{1j}$, $1\le j\le n$;  they are shown as squares in the figure below.
All vertices except for one are arranged into a $(2n-1) \times n$ grid; we will 
refer to vertices of the grid using their position in the grid numbered top to bottom and left to right.
The edges of $\bar Q_n$ are $(i,j) \to (i+1,j+1)$ for $i=1,\dots, 2n-2$, $j=1,\ldots, n-1$, $(i,j) \to (i,j-1)$ and 
$(i,j) \to (i-1,j)$
for $i=2,\dots, 2n-1$, $j=2,\ldots, n$, and $(i,1)\to (i-1,1)$ for $i=2,\dots,n$. Additionally, there is an oriented path
\[
(n+1,n)\to (3,1)\to (n+2,n)\to (4,1)\to\cdots\to(n,1)\to(2n-1,n). 
\]
The edges in this path are depicted as dashed in Figure \ref{example_quiver} ( the dashed style does not indicate any special features of these edges, it is for visualization purposes only). The vertex $(2,1)$ is special;
it is shown as a hexagon in the figure. The last remaining 
vertex of $\bar Q_n$ is placed to the left of the special vertex and there is an edge pointing from the former one to the latter.

\begin{figure}[ht]
\begin{center}
\includegraphics[width=7cm]{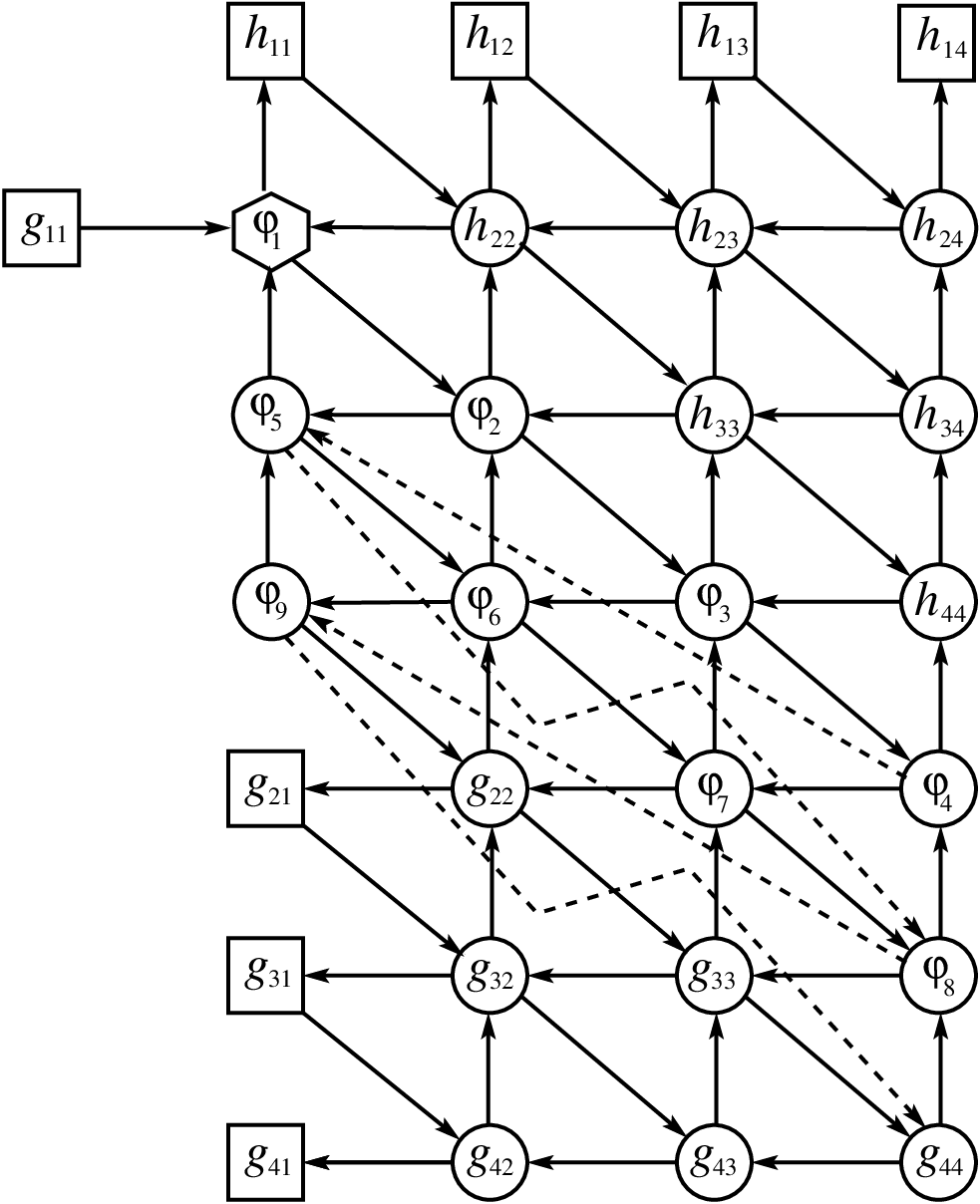}
\end{center}
\caption{Quiver $\bar Q_4$}
\label{example_quiver}
\end{figure}

Functions $h_{ij}$ are attached to the vertices $(i,j)$, $1\le i\le j\le n$, and all vertices in the upper row of $\bar Q_n$
are frozen. Functions $g_{ij}$ are attached to the vertices $(n+i-1,j)$, $1\le j\le i\le n$, $(i,j)\ne(1,1)$, and all such
vertices in the first column are frozen. The function $g_{11}$ is attached to the vertex to the left of the special one, and
this vertex is frozen. Functions $\phhi_{kn+i}$ are attached to the vertices $(i+k+1,i)$ for $1\le i\le n$, $0\le k\le n-3$;
the function $\phhi_{N-n+1}$ is attached to the vertex $(n,1)$. All these vertices are mutable. 
One can identify in $\bar Q_n$ three regions associated with three families
$\{ g_{ij}\}$, $\{h_{ij}\}$, $\{\phhi_{k}\}$. We will call vertices in these regions $g$-, $h$-, and $\fy$-vertices, respectively.
The set of strings $\bar\P_{n}$ contains a unique nontrivial string $(1,\tilde c_1(X,Y),\dots,\tilde c_{n-1}(X,Y),1)$ corresponding to the unique 
special vertex.

\begin{theorem}\label{newstructure}
The seed $\bar\Sigma_n=(\bar\FFF_n,\bar Q_n,\bar \P_n)$ defines a complete generalized cluster structure $\NGCC_n^D$ in the ring of regular functions on the Drinfeld double $D(GL_n)$ compatible with the standard Poisson--Lie structure on $D(GL_n)$.
\end{theorem}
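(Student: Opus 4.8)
The plan is to establish the two claims of Theorem \ref{newstructure} --- compatibility with the Poisson--Lie bracket and completeness --- by the same strategy that was used for $\GCC_n^D$ in \cite{GSVCR, GSVDouble}, reducing everything to explicit computations in the initial seed $\bar\Sigma_n$ together with a comparison to the already-understood structure $\GCC_n^D$. For compatibility, I would invoke Proposition \ref{compatchar}: it suffices to verify, in the initial extended cluster $\bar\FFF_n$, that $\{\log x_i,\log y_j\}=\lambda d_j\delta_{ij}$ for all mutable $j$ and a single constant $\lambda$, and that the Laurent monomials $\hat p_{kr}$ attached to the unique special vertex $(2,1)$ (built from the string $(1,\tilde c_1,\dots,\tilde c_{n-1},1)$) are Casimirs of the standard Poisson--Lie bracket on $D(GL_n)$. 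The Casimir claim should follow from the fact that the coefficients $c_i(X,Y)$ of $\det(\lambda Y+\mu X)$ are central functions for the Poisson--Lie structure on the double (they are, up to sign, the $\tilde c_i$), a fact already available from the companion papers; this is the content of Sections \ref{logcan} and \ref{compat}.

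The log-canonicity and the $y$-variable computation are the technical heart of the compatibility half. First I would record the Poisson brackets among the generators of $\bar\FFF_n$: the functions $\phhi_i=\det\Phi^{[i,N]}_{[i,N]}$ are trailing minors of the block-banded matrix $\Phi(X,Y)$, and the $g_{ij}$, $h_{ij}$ are solid minors of $X$ and $Y$ respectively. The bracket relations for such minors under the standard $r$-matrix bracket are classical (they are log-canonical among themselves), and the mixed brackets between $\phhi$-, $g$-, and $h$-families can be computed either directly or by transporting the known relations for $\Sigma_n$ through the relation $\Phi(X,Y)$ versus the matrix used in \cite{GSVDouble}. Once log-canonicity of the whole extended cluster is in hand, the $y$-variables $y_k=u_{k;>}^{d_k}v_{k;>}/(u_{k;<}^{d_k}v_{k;<})$ are read off from the quiver $\bar Q_n$ described above (the grid edges, the dashed path $(n+1,n)\to(3,1)\to\cdots\to(2n-1,n)$, and the extra edge into the special vertex), and one checks that $\{\log x_i,\log y_j\}$ vanishes unless $i=j$ and equals $\lambda d_j$ when $i=j$. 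I expect the main obstacle to be precisely this bookkeeping: getting every edge contribution in $\bar Q_n$ to cancel against the Poisson coefficients $\omega_{ij}$, especially around the special vertex $(2,1)$ and the ``extra'' vertex carrying $g_{11}$, and around the $\phhi$-region where the block structure of $\Phi$ makes the minor brackets least transparent. A cleaner route, if available, is to exhibit an explicit change of variables or a birational quasi-isomorphism relating $\bar\Sigma_n$ to $\Sigma_n$ in a way that manifestly preserves log-canonicity, but even absent that, the direct verification is finite and follows the template of \cite{GSVDouble}.

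For completeness --- that $\UU(\NGCC_n^D)\otimes\C\cong\O(D(GL_n))$ --- the strategy in Section \ref{complet} would be the standard two-sided inclusion argument. Regularity on $D(GL_n)$ of all cluster variables (each initial variable is manifestly a polynomial in the matrix entries, and one must check that the mutations in the directions adjacent to frozen and special vertices stay regular, using the Laurent phenomenon together with coprimality of exchange polynomials) gives $\O(D(GL_n))\subseteq\UU(\NGCC_n^D)\otimes\C$ after identifying the fields. For the reverse inclusion one shows $\UU(\NGCC_n^D)\otimes\C\subseteq\O(D(GL_n))$ by proving that the initial cluster variables generate $\O(D(GL_n))$ as a field and that the complement of the locus where all of them are nonzero has codimension $\ge 2$, or, more efficiently, by invoking the general completeness criterion from \cite{GSVDouble} once one checks its hypotheses: that there are enough frozen variables (the $2n$ minors $g_{i1},h_{1j}$ plus $g_{11}$ and the isolated $\tilde c_i$), that the toric action generated by the weight matrix is of the expected rank (here Proposition \ref{globact} applies, since the $y$-variables and Casimirs $\hat p_{ir}$ are toric-invariant by construction), and that a single well-chosen sequence of mutations recovers a known generating set of regular functions on the double. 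The likely friction point in this half is verifying regularity across the special vertex, where the generalized exchange relation involves the full string $(1,\tilde c_1,\dots,\tilde c_{n-1},1)$; one must confirm that the resulting cluster variable is again polynomial, which ties back to the identity $\det(\lambda Y+\mu X)=\sum c_i\mu^i\lambda^{n-i}$ and the interpretation of the exchange polynomial at $(2,1)$ as (a piece of) this characteristic polynomial.
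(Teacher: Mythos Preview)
Your compatibility outline matches the paper's approach: log-canonicity of $\bar\FFF_n$ is verified directly (Section~\ref{logcan}, with the $\phhi$--$\phhi$ brackets handled via a block computation in $\Phi$ and its gradients $\nabla^k_\Phi$), and then the hypothesis of Proposition~\ref{compatchar} is checked vertex by vertex (Section~\ref{compat}), with an auxiliary global toric action (Lemma~\ref{gtaind}) organizing the cancellations. The Casimir condition on the $\hat p_{kr}$ is indeed the same as for $\GCC_n^D$. Regularity is not reproved but quoted from \cite[Theorem~4.1]{GSVstaircase}.

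For completeness, however, your sketch has two problems. First, the inclusions are reversed: regularity of cluster variables feeds into $\UU\otimes\C\subseteq\O(D(GL_n))$, while the substantive direction $\O\subseteq\UU$ is precisely the statement that the matrix entries $x_{ij},y_{ij}$ lie in the upper cluster algebra. Second, and more seriously, you are missing the mechanism that actually produces the matrix entries. The paper does \emph{not} invoke a general completeness criterion from \cite{GSVDouble}. Instead (Propositions~\ref{tallmatrix} and~\ref{longmatrix}) it constructs explicit unipotent matrices $G$ and $H$, whose entries have denominators that are monomials in the particular variables $\phhi_{kn+1}$ (resp.\ $\phhi_{k(n-1)+1}$), so that the auxiliary matrices $S=\left[\begin{smallmatrix}Y\\ GX_{[2,n]}\end{smallmatrix}\right]$ and $T=[\,X_{[2,n]}\ Y_{[2,n]}H\,]$ have their trailing dense minors equal to the ratios $\phhi_{kn-i+1}/\phhi_{kn+1}$. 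Freezing exactly those $\phhi_{kn+1}$ turns the remaining subquiver of $\bar Q_n$ into the standard initial quiver for the ordinary cluster structure on $\Mat_{(2n-1)\times n}$ (resp.\ $\Mat_{(n-1)\times 2n}$), where every matrix entry is already known to be a cluster variable; one then transports this back via \cite[Lemma~8.4]{GSVMem} and an irreducibility argument to conclude that all entries of $Y$ and of $X_{[2,n]}$ are genuine cluster variables in $\NGCC_n^D$ (Proposition~\ref{matrixentries}). The entries $x_{1i}$ of the first row of $X$ are \emph{not} reachable this way and require a separate argument (Lemma~\ref{firstrow}): writing $c_j(X,Y)=c_j(\bar X,Y)+\sum_i x_{1i}z_{ij}$ as a linear system in the $x_{1i}$, one shows that the coefficient determinant $\det Z$ is a scalar multiple of $\phhi_1$, and then extracts each $x_{1i}$ as a Laurent polynomial both in $\bar\FFF_n$ and in the adjacent cluster $\bar\FFF'_n$. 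This two-step reduction --- freeze-and-compare with rectangular matrices, then a bespoke linear-algebra argument for the first row of $X$ --- is the content your proposal does not supply; a generic codimension-$2$ argument or an unspecified ``well-chosen sequence of mutations'' does not locate it.
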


\begin{proof} Regularity of $\NGCC_n^D$ is proved in \cite[Theorem 4.1]{GSVstaircase}. To prove compatibility, it suffices to check that the family $\bar\FFF_n$ is log-canonical with respect to the bracket $\Poi_D$, which is done in 
Section~\ref{logcan} below, and to check the compatibility conditions of Proposition \ref{compatchar}, which is done in 
Section~\ref{compat} below.
To prove the completeness, we establish a connection between cluster dynamics for standard cluster structures
on rectangular matrices and that for $\NGCC_n^D$ with certain vertices frozen, see Section \ref{complet} below. As a consequence we prove in Proposition \ref{matrixentries} that all matrix entries in $Y$, and all matrix entries in $X$ except for the 
first row are cluster variables in $\NGCC_n^D$. The entries in the first row of $X$ are treated separately in 
Lemma \ref{firstrow}.
\end{proof} 

\subsection{Log-canonicity}\label{logcan}
 Denote by $\b_\pm$ Borel  subalgebras of upper/lower triangular matrices in $\gl_n$ and by $\n_\pm$ the corresponding nilpotent ideals. Let $\pi_{>0}$, $\pi_{<0}$ be the projections of an element of $\gl_n$ onto $\n_+,\n_-$,
$\pi_0$ be the projection onto the diagonal subalgebra, 
$R_+ = \frac 1 2 \pi_0 + \pi_{>0}$. As explained in 
\cite[Section 2.2]{GSVDouble}, the {\em standard Poisson-Lie bracke}t $\Poi=\Poi_D$ on $D(GL_n)$ can be written as
\begin{equation}\label{sklyadoubleGL}
\begin{split}
\{f_1,f_2\}
=&\left\langle R_+(E_L f_1), E_L f_2\right\rangle -  \left\langle R_+(E_R f_1), E_R f_2\right\rangle\\
&+  \left\langle X \nabla_X  f_1, Y\nabla_Y f_2\right\rangle - \left\langle \nabla_X f_1 \cdot X, \nabla_Y f_2 \cdot Y\right\rangle\\
=&\left\langle R_+(E_L f_1), E_L f_2\right\rangle -  \left\langle R_+(E_R f_1), E_R f_2\right\rangle\\
&+  \left\langle E_R  f_1, Y\nabla_Y f_2\right\rangle - \left\langle E_L f_1, \nabla_Y f_2 \cdot Y\right\rangle,
\end{split}
\end{equation}
where $\nabla_X f=\left(\frac{\partial f}{\partial x_{ji}}\right)_{i,j=1}^n$ and
$\nabla_Y f=\left(\frac{\partial f}{\partial y_{ji}}\right)_{i,j=1}^n$ are the gradients of $f$ with respect to $X$ and $Y$,
respectively, the operators $E_R$ and $E_L$ are defined via
\[
E_R f = X \nabla_X f + Y\nabla_Y f,\qquad E_L f =  \nabla_X f\cdot X+ \nabla_Y f \cdot Y,
\]
and $\langle A,B\rangle=\Tr AB$ is the trace form; in what follows we
will omit the comma and write just $\langle AB\rangle$.

Note that the functions $\tilde c_i$ are Casimirs for $\Poi$.   One way to see this is by observing that any function $f$ on $D(GL_n)$ that has a property, shared by all $\tilde c_i$, that $f(A X B, A Y B)=\det A\det B\cdot f(X,Y)$ for any $X,Y,A,B\in GL_n$ is a Casimir. For such $f$, $E_R  f =  E_L  f$ is a scalar multiple of the identity matrix and so the claim follows from the second formula in \eqref{sklyadoubleGL} and the identity $\Tr AB=\Tr BA$. Thus, we only need to treat the functions in the three other subfamilies in $\bar\FFF_n$.

\begin{lemma}
\label{inv}
{\rm (i)} For any $1\le j\le i\le n$ and $1\le k \le N$,
\begin{equation}\label{inv_prop}
\begin{aligned}
&g_{ij}(X)=g_{ij}(N_+X), \qquad   h_{ji}(Y)=h_{ji}(YN_-), \\  
&\phhi_k(X,Y)= \phhi_k(N_+ X N_-, N_+ Y N_-),
\end{aligned}
\end{equation}
where  
$N_+$ is an arbitrary  unipotent upper-triangular matrix and $N_-$ is an arbitrary  unipotent lower-triangular matrix. In addition, 
$g_{ij}$ and $h_{ij}$ are homogeneous with respect to right and left multiplication of their arguments by arbitrary diagonal matrices, and $\phhi_k$ are homogeneous with respect to right and left multiplication of  $X$, $Y$ by the same pair of diagonal matrices.

{\rm (ii) } Let $g$, $h$ and $\phhi$ be three functions possessing invariance properties \eqref{inv_prop}, respectively.
Then
\begin{equation}\label{brafygh}
\begin{aligned}
\{\phhi,g\}&=\frac12\langle E_L\phhi,\nabla_X g\cdot X\rangle_0-\frac12\langle E_R\phhi, X\nabla_X g\rangle_0,\\
\{\phhi,h\}&=\frac12\langle E_R\phhi, Y\nabla_Y h\rangle_0-\frac12\langle E_L\phhi,\nabla_Y h\cdot Y\rangle_0,\\
\{g,h\}&=\frac12\langle X\nabla_X g, Y\nabla_Y h \rangle_0-\frac12\langle \nabla_X g\cdot X,\nabla_Y h\cdot Y
\rangle_0,
\end{aligned}
\end{equation}
where $\langle A,B\rangle_0=\langle AB\rangle_0=\langle\pi_0(A)\pi_0(B)\rangle$.
\end{lemma}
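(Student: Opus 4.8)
The plan is to prove Lemma \ref{inv} in two parts, mirroring its statement.

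\textbf{Part (i): Invariance properties.} These are the essential "symmetry" facts that will make the rest of the argument run, and they follow from direct inspection of the determinantal definitions. For $g_{ij}=\det X^{[j,j+n-i]}_{[i,n]}$, left-multiplying $X$ by a unipotent upper-triangular $N_+$ affects rows $i,\dots,n$ of $X$ by adding to each a combination of \emph{lower-indexed} rows; but since we are taking the submatrix on rows $[i,n]$, these added rows with index $<i$ do not all lie in the submatrix — so one has to be slightly careful. The point is that the relevant block of $N_+$ acting on rows $[i,n]$ is itself unipotent upper-triangular (it is the principal submatrix $(N_+)^{[i,n]}_{[i,n]}$ plus a part coming from rows above $i$, but those rows are not selected, so only the principal block contributes to the determinant of the selected rows). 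Hence $\det (N_+X)^{[j,j+n-i]}_{[i,n]} = \det (N_+)^{[i,n]}_{[i,n]}\cdot\det X^{[j,j+n-i]}_{[i,n]} = \det X^{[j,j+n-i]}_{[i,n]}$. The argument for $h_{ji}(Y)=h_{ji}(YN_-)$ is the transpose/column version of the same computation. For $\phhi_k$: each block of $\Phi$ is either $Y_{[2,n]}$ or $X_{[2,n]}$ (rows $2,\dots,n$, all columns), and conjugating $X\mapsto N_+XN_-$, $Y\mapsto N_+YN_-$ has the effect, on the rows $[2,n]$, of left-multiplying by the principal block $(N_+)^{[2,n]}_{[2,n]}$ and right-multiplying by $N_-$; assembling these block operations into a block-diagonal left factor and a block-diagonal right factor, both unipotent triangular of determinant $1$, shows $\det\Phi^{[i,N]}_{[i,N]}$ is unchanged — one just has to check the truncation to rows/columns $[i,N]$ of $\Phi$ is compatible with this block structure, which it is because $N=(n-1)n$ is a multiple of $n-1$ and the index $i$ need not be, so within a partially-truncated block the same "select-rows-kills-the-strictly-triangular-correction" principle applies. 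The homogeneity statements are immediate from multilinearity of the determinant in rows and columns.

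\textbf{Part (ii): The bracket formulas.} This is where the invariance properties of part (i) are put to work, via the Poisson bracket \eqref{sklyadoubleGL}. The strategy is: invariance of a function $f$ under left (resp. right) multiplication by unipotent upper-triangular matrices forces $\pi_{\le 0}(E_L f)$ (resp. $\pi_{\le 0}(E_R f)$) to vanish, and invariance under multiplication by unipotent lower-triangular matrices forces $\pi_{\ge 0}$ of the corresponding gradient combination to vanish; differentiating the homogeneity relations kills the Cartan parts too, so that $E_L g$, $E_R g$, etc., land in a single nilpotent subalgebra (strictly upper or strictly lower triangular) depending on the function. Concretely: $g$ satisfies $g(X)=g(N_+X)$, so $E_L g = \nabla_X g\cdot X$ has no strictly-lower-triangular or Cartan component when paired against things — more precisely $\pi_{<0}(\nabla_X g\cdot X)=0$ and $\pi_0(\nabla_X g\cdot X)=0$ by the diagonal homogeneity, hence $\nabla_X g\cdot X = E_L g$ is strictly upper triangular; similarly $X\nabla_X g = E_R g$ is strictly upper triangular, and $\nabla_Y g=0$. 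For $h$: $\nabla_X h=0$, and $\nabla_Y h\cdot Y = E_L h$, $Y\nabla_Y h = E_R h$ are strictly lower triangular. For $\phhi$: $N_+XN_-$-invariance plus diagonal homogeneity gives that $E_L\phhi=\nabla_X\phhi\cdot X+\nabla_Y\phhi\cdot Y$ is strictly upper triangular and $E_R\phhi = X\nabla_X\phhi+Y\nabla_Y\phhi$ is strictly lower triangular. Now substitute these facts into the second form of \eqref{sklyadoubleGL}. For $\{\phhi,g\}$: the terms $\langle R_+(E_L\phhi),E_Lg\rangle$ and $\langle R_+(E_R\phhi),E_Rg\rangle$ — use $R_+=\frac12\pi_0+\pi_{>0}$ and the trace-form pairing rule that strictly-upper pairs nontrivially only with strictly-lower, while $\pi_0$ of a strictly triangular matrix is $0$; combined with $\nabla_Y g=0$ killing the mixed $X$--$Y$ terms, everything collapses to a Cartan-projected expression. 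The key identity used repeatedly is $\langle R_+ A, B\rangle - \langle A, R_+^* B\rangle$-type manipulations together with $\langle \pi_{>0}(A)\pi_{>0}(B)\rangle=0$, $\langle\pi_{>0}(A)\pi_0(B)\rangle=0$, which reduce each surviving contribution to $\frac12\langle\pi_0(\cdot)\pi_0(\cdot)\rangle=\frac12\langle\cdot,\cdot\rangle_0$. Carrying this out line by line yields exactly the three displayed formulas. The formulas for $\{\phhi,h\}$ and $\{g,h\}$ come out of the same bookkeeping with the roles of upper/lower and $X$/$Y$ interchanged.

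\textbf{Main obstacle.} The routine-but-delicate part is the careful tracking in part (ii) of which projections ($\pi_{>0}$, $\pi_{<0}$, $\pi_0$) annihilate which gradient expressions, and ensuring the cross terms $\langle X\nabla_X\phhi, Y\nabla_Y g\rangle$-type contributions vanish or combine correctly — one must use \emph{both} the unipotent invariance (to control the strictly-triangular parts) \emph{and} the diagonal homogeneity (to kill the Cartan parts), and a sign/normalization slip is easy. The conceptual heart, though, is simply the observation that $g$, $h$, $\phhi$ each have "one-sided" gradients living in a single nilpotent subalgebra, after which the bracket \eqref{sklyadoubleGL} degenerates to its Cartan-projected shadow $\langle\cdot,\cdot\rangle_0$; I expect the write-up to first record the gradient-localization facts as a sublemma, then do the three substitutions in parallel.
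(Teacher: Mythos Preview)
Your overall strategy matches the paper's: part (i) is immediate from the determinantal definitions, and part (ii) follows by using these invariances to force the relevant gradients into Borel subalgebras and then simplifying the $R_+$ terms in \eqref{sklyadoubleGL}. However, your execution of part (ii) contains two concrete errors that would derail the computation.

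First, you have the Borel directions reversed. Left $N_+$-invariance $g(N_+X)=g(X)$, differentiated along $\xi\in\mathfrak n_+$, gives $\operatorname{Tr}(X\nabla_Xg\cdot\xi)=0$, hence $X\nabla_Xg=E_Rg\in\mathfrak b_+$; it says nothing about $E_Lg=\nabla_Xg\cdot X$. Likewise, right $N_-$-invariance of $\phhi$ forces $E_L\phhi\in\mathfrak b_-$, and left $N_+$-invariance gives $E_R\phhi\in\mathfrak b_+$ --- exactly the opposite of what you wrote. For $h$ one gets only $E_Lh=\nabla_Yh\cdot Y\in\mathfrak b_-$. The point is that for each pair you only need \emph{one} of $E_L,E_R$ to lie in a Borel; the other side is handled by moving $R_+$ across the trace form via $\langle R_+A,B\rangle=\langle A,R_-B\rangle$ with $R_-=\tfrac12\pi_0+\pi_{<0}$.

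Second, and more seriously, your claim that diagonal homogeneity forces $\pi_0(E_Lg)=0$, $\pi_0(E_R\phhi)=0$, etc., is wrong. Homogeneity under diagonal multiplication means these diagonal parts are the (nonzero) weight vectors of $g,h,\phhi$ --- constant when applied to $\log$ of the function, not zero. If they vanished, all three right-hand sides in \eqref{brafygh} would be identically zero, making the lemma vacuous. The entire content of \eqref{brafygh} is precisely that only the Cartan projections survive; the off-diagonal contributions cancel because the relevant gradients live in opposite Borels and $\operatorname{Tr}$ of a product of two upper (or two lower) triangular matrices reduces to the trace of the product of their diagonals. With the correct Borel placements, the computations for $\{\phhi,g\}$, $\{\phhi,h\}$, $\{g,h\}$ go through directly.
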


\begin{proof} 
(i) Invariance properties of functions $g_{ij}$, $h_{ij}$, $\phhi_k$ follow easily from their definition.

(ii) Infinitesimally, equations \eqref{inv_prop} imply that
$ X\nabla_X g, E_R\fy\in \b_+$ and  $ \nabla_Y h\cdot Y, E_L \fy\in\b_-$. 
Taking into account that $R_+(\xi)=\frac{1}{2}\pi_0(\xi)$ for any $\xi\in\b_\pm$ and that $\b_\pm \perp \n_\pm$ with respect to $\langle\cdot ,\cdot \rangle$, the result follows from \eqref{sklyadoubleGL}.
\end{proof}

The fact that any two of the three functions $g_{ij}$, $h_{pq}$, $\phhi_k$ are log-canonical follows immediately from the above
lemma.  Indeed,  the infinitesimal version of homogeneity properties described in Lemma \ref{inv}(i) states that for functions  $g=\log g_{ij}$, $h=\log h_{pq}$, $\phhi=\log \phhi_k$, projections of $\nabla_X g\cdot X$, $X\nabla_X g$,  
$Y\nabla_Y h$,  $\nabla_Y h\cdot Y$, $E_L\phhi$, $E_R\phhi$ onto the Cartan subalgebra are constant diagonal matrices and the claim then follows from \eqref{brafygh}. Log-canonicity of the families 
$\{ g_{ij}\}$ and $\{ h_{ij}\}$ is established in Lemma 5.4 in \cite{GSVDouble}.

It remains to show that  $\{\log\phhi_k, \log\phhi_l\}$ is constant for any $k$, $l$.  In fact, it suffices to consider
the case $1\le k\le l\le N-n+1$, since for $k>N-n+1$ the function $\phhi_k$ belongs to the family $\{g_{ij}\}$.

Denote $\nabla_X\log\phhi_k$, $\nabla_Y\log\phhi_k$,  $E_R \log\phhi_k$, $E_L\log\phhi_k$ by $\nabla_X^k$, 
$\nabla_Y^k$, $E_R^k$, $E_L^k$, respectively. 
As mentioned in the proof of Lemma~\ref{inv} (ii), $E_R^k \in \b_+$, $E_L^k  \in \b_-$. Consequently,  
\eqref{sklyadoubleGL} gives
\begin{equation}
\label{bra_k_l}
\begin{split}
\{\log\phhi_k, \log\phhi_l\} = &\frac 1 2 \left\langle E_L^k E_L^l\right\rangle_0 -  
\frac 1 2 \left\langle E_R^k E_R^l\right\rangle_0\\
&+  \left\langle X \nabla_X^k  Y\nabla_Y^l \right\rangle - \left\langle \nabla_X^k X \nabla_Y^l Y\right\rangle.
\end{split}
\end{equation}
It follows from  the homogeneity of functions $\fy_k$ (Lemma~\ref{inv}(i)) that  the first two terms above are constant. 
Thus, we need to evaluate 
\begin{equation}
\label{badpiece}
\left \langle X \nabla^k_X Y \nabla^l_Y \right \rangle - \left \langle  \nabla^k_X X \nabla^l_Y Y \right \rangle.
\end{equation}

For a smooth function $F$ on $\Mat_N$, we write its gradient $\nabla_\Phi F$ in a block form as
$\nabla_\Phi F = \left ( \nabla_{pq}\right )_{p=1, q=1}^{n-1, n}$, where the blocks $\nabla_{pq}$ 
are of size $n\times (n-1)$. 
For functions $\phhi_k$  viewed as functions on $\Mat_N$, we denote $\nabla_\Phi \log\phhi_k$ by 
$\nabla^k_\Phi =\left ( \nabla^k_{pq}\right )_{p=1, q=1}^{n-1, n}$. 

 Observe that if one views $l\times l$ trailing principal minor of a square matrix $A$ as a function $f(A)$ of $A$ then
\[
A \nabla f(A) = \left (\begin{array}{cc} 0 & \star \\0 & \one_{l}\end{array} \right ),\qquad 
\nabla f(A) A = \left (\begin{array}{cc} 0 & 0 \\ \star& \one_{l}\end{array} \right ).
\]
Thus
\begin{equation}
\label{nablaPhi}
\Phi \nabla_\Phi^k = \left (\begin{array}{cc} 0 & \star \\0 & \one_{N-k+1}\end{array} \right ),\qquad 
\nabla_\Phi^k \Phi = \left (\begin{array}{cc} 0 & 0 \\ \star& \one_{N-k+1}\end{array} \right ).
\end{equation}

Denote $\X = X_{[2,n]}$, $\Y = Y_{[2,n]}$.  Then \eqref{nablaPhi} translates into
\begin{equation}\label{nablaPhiEntries}
\begin{aligned}
&\X  \nabla^k_{pq} + \Y  \nabla^k_{p+1,q}=0,\quad p \geq q, \\ 
&\X  \nabla^k_{n-1, q} = 0, \quad q < n, \\
&\nabla^k_{pq} \Y + \nabla^k_{p,q+1} \X=0,\quad  p < q.
\end{aligned}
\end{equation}

Clearly, 
\begin{align*}
&\left \langle X \nabla^k_X Y \nabla^l_Y \right \rangle = \left \langle \X \sum_{p=1}^{n-1}\nabla^k_{p,p+1} \Y \sum_{q=1}^{n-1}\nabla^l_{qq} \right \rangle ,\\
&\left \langle  \nabla^k_X X \nabla^l_Y Y \right \rangle = \left \langle  \sum_{p=1}^{n-1}\nabla^k_{p,p+1} \X  \sum_{q=1}^{n-1}\nabla^l_{qq} \Y \right \rangle.
\end{align*}

Denote 
$A_{pq} = \left\langle \X \nabla^k_{p,p+1} \Y \nabla^l_{qq} \right\rangle$, 
$B_{pq} = \left\langle  \nabla^k_{p,p+1}\X \nabla^l_{qq} \Y \right\rangle$.
Using \eqref{nablaPhiEntries}, we obtain
\begin{equation*}
\begin{split}
A_{pq} = -\left \langle \X \nabla^k_{p,p+2} \X \nabla^l_{qq} \right \rangle = 
\left \langle \X \nabla^k_{p,p+2} \Y \nabla^l_{q+1,q} \right \rangle = \cdots \\
= -\left \langle \X \nabla^k_{p,p+s+1} \X \nabla^l_{q+s-1,q} \right \rangle
 = \left \langle \X \nabla^k_{p,p+s+1} \Y \nabla^l_{q+s,q} \right \rangle = \cdots\\ =
\begin{cases} 
-\left \langle \X \nabla^k_{p,p-q +n+1} \X \nabla^l_{n-1,q} \right \rangle=0, & p<q,\\
 \left \langle \X \nabla^k_{pn} \Y \nabla^l_{q-p+n-1,q} \right \rangle, & p\geq q.
\end{cases}
\end{split}
\end{equation*}

Similarly,
\[
B_{pq} = \begin{cases} 
0, & p\leq q,\\
- \left \langle\nabla^k_{pn} \Y \nabla^l_{q-p+n,q} \Y\right \rangle, & p > q.
\end{cases} 
\]
Therefore,
\begin{equation}\label{sums}
\begin{aligned}
&\left \langle X \nabla^k_X Y \nabla^l_Y \right \rangle - \left \langle  \nabla^k_X X \nabla^l_Y Y \right \rangle \\
&= \sum_{1\leq q\leq p\leq n-1} \left \langle \X \nabla^k_{pn} \Y \nabla^l_{q-p+n-1,q} \right \rangle \ 
+\sum_{1\leq q <  p\leq n-1} \left \langle \Y \nabla^k_{pn} \Y \nabla^l_{q-p+n,q} \right \rangle \\
&= \sum_{1\leq q\leq p\leq n-1} \left \langle \X \nabla^k_{pn} \Y \nabla^l_{q-p+n-1,q} \right \rangle \ 
+\sum_{1\leq q \leq  p\leq n-2} \left \langle \Y \nabla^k_{p+1,n} \Y \nabla^l_{q-p+n-1,q} \right \rangle \\
&= \sum_{1\leq q\leq  n-1} \left \langle \left ( \Phi \nabla_\Phi^k\right )_{nn} \Y \nabla^l_{qq} \right \rangle \ 
+\sum_{1\leq q \leq  p\leq n-2} \left \langle \left ( \Phi \nabla_\Phi^k\right )_{p+1,n} \Y \nabla^l_{q-p+n-1,q} 
\right \rangle .
\end{aligned}
\end{equation}

Since  $\left ( \Phi \nabla_\Phi^k\right )_{nn} =\one_{n-1}$ for $k\le N-n+1$, the first sum above is equal to 
$\sum_{1\leq q\leq  n-1} \left \langle \Y \nabla^l_{qq} \right \rangle$. Furthermore, 
$\nabla^l_{q-p+n-1,q}=0$ for $l>q(n-1)+1$, while
$\left ( \Phi \nabla_\Phi^k\right )_{p+1,n}=0$ for $k\le p(n-1)+1$. Thus, the summand in the second sum in~\eqref{sums} 
is zero unless
$p(n-1)+1 < k\le l \le q(n-1)+1$, which is impossible since $q\leq p$. We conclude that
\[
\left \langle X \nabla^k_X Y \nabla^l_Y \right \rangle - \left \langle  \nabla^k_X X \nabla^l_Y Y \right \rangle = \sum_{1\leq q\leq  n-1} \left \langle \Y \nabla^l_{qq} \right \rangle
\]
for $1\le k\le l\le N-n+1$.

Let us show that  the right hand side above is constant. To this end, first observe that 
\[
\sum_{1\leq q\leq  n-1} \left \langle \Y \nabla^l_{qq} \right \rangle = \left.\frac {d} {d t}\right|_{t=0} \log\phhi_l 
\left (X, e^t Y \right ). 
\]
Furthermore, 
\[
\Phi \left (X, e^t Y \right )= \diag\left ( e^{nt} \one_{n-1},\ldots, e^{t} \one_{n-1}\right ) \Phi \left (X, Y \right )\diag\left ( e^{-(n-1)t} \one_{n},\ldots, e^{-t} \one_{n}\right ) .
\] 
Then $\phhi_l\left (X, e^t Y \right )=\det \Phi\left (X, e^t Y \right )_{[l,N]}^{[l,N]} = e^{\varkappa_l t}
\phhi_l\left (X, Y \right )$, where constant coefficients $\varkappa_l$ can be arranged into a matrix
\begin{equation}
\label{kappas}
\left (\varkappa_{\mu (n-1) + \sigma}\right )_{\mu =0\ \sigma=1}^{n-1 \  n-1} = 
\left ( \begin{array} {ccccc}
{n \choose 2} & {n \choose 2}-1 & {n \choose 2}-2 & \cdots & {n-1\choose 2}+1\\
{n-1\choose 2} & {n-1\choose 2} & {n-1\choose 2}-1 & \cdots & {n-2\choose 2}+1\\
{n-2\choose 2} & {n-2\choose 2} & {n-2\choose 2} & \cdots & {n-3\choose 2}+1\\
\vdots & \vdots & \vdots & \ddots & \vdots\\
1 & 1 & 1 & \cdots & 1\\
0 & 0 & 0 & \cdots & 0
\end{array}
\right ).
\end{equation}
Therefore, $\sum_{1\leq q\leq  n-1} \left \langle \Y \nabla^l_{qq} \right \rangle = \varkappa_l$ and we are done.

\subsection{Compatibility} \label{compat}
To prove the compatibility statement, we start with the following lemma, which is a direct analog of \cite[Theorem 6.1]{GSVDouble}.

\begin{lemma}\label{gtaind}
The action 
\begin{equation}\label{lraction}
(X,Y)\mapsto (T_1 X T_2, T_1 Y T_2)
\end{equation}
 of right and left multiplication by diagonal matrices 
is $\NGCC_n^D$-extendable to a global toric action on $\FF_{\C}$.
\end{lemma}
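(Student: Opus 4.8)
The plan is to verify the criterion of Proposition \ref{globact}: it suffices to check that the toric action \eqref{lraction} of rank $2n$ (one parameter per diagonal entry of $T_1$ and one per diagonal entry of $T_2$) leaves invariant every $y$-variable $y_k$ of the initial seed $\bar\Sigma_n$ and every Casimir $\hat p_{ir}$ associated with the unique special vertex. By Proposition \ref{globact} this makes the local action uniquely $\NGCC_n^D$-extendable to a global toric action, which is exactly the assertion of the lemma. Since all defining functions of the seed are the regular functions collected in $\bar\FFF_n$, invariance of the $y$-variables and Casimirs reduces to a computation with the homogeneity weights of $g_{ij}$, $h_{ij}$, $\phhi_k$ and $\tilde c_i$ under \eqref{lraction}, which are all recorded (or immediate) from Lemma \ref{inv}(i).

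First I would tabulate the weights. Writing $T_1=\diag(s_1,\dots,s_n)$, $T_2=\diag(t_1,\dots,t_n)$, the function $g_{ij}=\det X_{[i,n]}^{[j,j+n-i]}$ picks up the factor $\prod_{a=i}^{n}s_a\cdot\prod_{b=j}^{j+n-i}t_b$ (a product of $n-i+1$ of the $s$'s and the same number of $t$'s), and $h_{ij}=\det Y_{[i,i+n-j]}^{[j,n]}$ picks up $\prod_{a=i}^{i+n-j}s_a\cdot\prod_{b=j}^{n}t_b$. Each $\phhi_k$ is a trailing principal minor of $\Phi$, whose rows and columns are indexed by the block structure in \eqref{Phi}; since $X$ and $Y$ scale the same way under \eqref{lraction}, $\phhi_k$ scales by $\prod_{a}s_a^{\alpha^k_a}\prod_{b}t_b^{\beta^k_b}$ for explicit multiplicities $\alpha^k_a,\beta^k_b$ determined by which rows/columns of the grid in \eqref{Phi} survive in $\Phi_{[k,N]}^{[k,N]}$. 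Finally each $\tilde c_i$, being (up to sign) a coefficient of $\det(\lambda Y+\mu X)$, scales by $\prod_a s_a\prod_b t_b$, the same weight for all $i$ — this is what makes the string Casimirs behave well.

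Next I would plug these weights into the formulas \eqref{yvar} and the Casimir formula of Proposition \ref{compatchar}. For a mutable vertex $k$, $y_k$ is a ratio in which the numerator $u_{k;>}^{d_k}v_{k;>}$ and denominator $u_{k;<}^{d_k}v_{k;<}$ run over the in- and out-neighbors of $k$ in $\bar Q_n$; invariance of $y_k$ is the statement that the total weight of the in-neighbors equals that of the out-neighbors, for both the $s$- and the $t$-gradings. Because the quiver $\bar Q_n$ is essentially the incidence pattern of the block matrix $\Phi$ together with the two "border" families $\{g_{ij}\}$, $\{h_{ij}\}$, these balancing identities are local telescoping relations among the weight vectors above — exactly the same bookkeeping that appears in \cite[Theorem 6.1]{GSVDouble} for $\GCC_n^D$, which is why the lemma is stated as a direct analog. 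For the special vertex $(2,1)$ with string $(1,\tilde c_1,\dots,\tilde c_{n-1},1)$, the Casimirs $\hat p_{ir}$ are monomials in the $\tilde c_i$ times stable $\tau$-monomials in the frozen $g_{i1},h_{1j}$; since every $\tilde c_i$ has the common weight $\prod_a s_a\prod_b t_b$ and, as one checks, the $\tau$-monomial part carries the complementary weight, each $\hat p_{ir}$ is invariant. Invoking Proposition \ref{globact} then finishes the proof.

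The main obstacle is the verification of the weight-balancing identities at the $\phhi$-vertices, where the geometry of the quiver — the "staircase" edges $(i,j)\to(i+1,j+1)$ together with the dashed path through the vertices $(n+k,n)$ and $(k+2,1)$ — interacts with the block structure of $\Phi$ in a way that is routine but genuinely case-dependent (vertices on the boundary of the grid, the vertex $(n,1)$ carrying $\phhi_{N-n+1}$, and the neighbors of the special vertex each need separate attention). I expect this to go through by the same argument as in \cite{GSVDouble}: one shows that the only nontrivial interactions are between consecutive $\phhi$'s and between a $\phhi$ and an adjacent $g$ or $h$, and in each such case the relevant weights telescope. I would also remark that the rank of the resulting global action is $2n-1$ rather than $2n$, because the overall scaling $(X,Y)\mapsto(cX,cY)$ acts trivially on all cluster and frozen variables up to the common factor already accounted for — but this does not affect the statement of the lemma, which only asserts extendability.
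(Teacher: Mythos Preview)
Your proposal is correct and follows essentially the same route as the paper: invoke Proposition~\ref{globact}, compute the left/right weights of $g_{ij}$, $h_{ij}$, and $\phhi_k$ under the action~\eqref{lraction}, and verify that every $y$-variable has total weight zero by a case-by-case check over the vertices of $\bar Q_n$. The paper organizes the weights as diagonal matrices $\xi_L(f)=\pi_0(E_L\log f)$, $\xi_R(f)=\pi_0(E_R\log f)$ (your $s$-- and $t$--exponents repackaged), records explicit formulas for them in~\eqref{weights}, and then carries out the $\phhi$-vertex verification in three cases according to how $\lambda_k,\lambda_{k\pm1}$ relate; for the Casimirs it simply notes that the $\hat p_{1r}$ coincide with those of $\GCC_n^D$ and are therefore already known to be invariant from \cite{GSVDouble}, rather than rechecking them directly as you sketch.
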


\begin{proof} As explained in \cite{GSVDouble}, Proposition \ref{globact} implies that it suffices to check that 
$y$-variables \eqref{yvar} are homogeneous functions of degree zero with respect to the action \eqref{lraction} (note
that the functions $\hat p_{1r}$ for $\NGCC_n^D$ are the same as for $\GCC_n^D$). To this effect, 
we define weights $\xi_L(f)=\pi_0(E_L\log f)$ and $\xi_R(f)=\pi_0(E_R\log f)$. For $1\leq i \leq j \leq n$, let $\Delta(i,j)$ denote a diagonal matrix with $1$'s in the entries $(i,i), \ldots, (j,j)$ and $0$'s everywhere else.
Direct computation shows that
\begin{equation}\label{weights}
\begin{aligned}
& \xi_L(g_{ij})= \Delta(j, n+j -i),\quad\xi_R(g_{ij})= \Delta(i,n),  \\
& \xi_L(h_{ij})= \Delta(j, n),\quad   \xi_R(h_{ij})= \Delta(i,n+i-j),\\
& \xi_L(\phhi_k)=(n-2-\lambda_k)\Delta(1,n)+\Delta(\rho_k,n),\\
& \xi_R(\phhi_k)=(n-1-\mu_k)\Delta(2,n)+\Delta(\sigma_k+1,n),
\end{aligned}
\end{equation}
where  $\lambda_k$, $\rho_k$, $\mu_k$, and $\sigma_k$ are defined via $k=\lambda_kn+\rho_k$, $1\le\rho_k\le n$, and $k=\mu_k(n-1)+\sigma_k$, $1\le\sigma_k\le n-1$. 
Now the verification of the claim above becomes straightforward. It is  
based on the description of $\bar Q_n$ in Section~\ref{init} 
and the fact that for a Laurent monomial in 
homogeneous functions $M=\psi_1^{\alpha_1}\psi_2^{\alpha_2}\cdots$ the 
right and left weights are  $\xi_{R,L}(M) = \alpha_1 \xi_{R,L}(\psi_1) + \alpha_2 \xi_{R,L}(\psi_2) + \cdots$. 

For example, let $v$ be the vertex associated with the function $\phhi_k$, $n+1\le k\le N-n$. To treat the left weight
of $y_v$ we have to consider 
the following three cases.

{\it Case 1:} $\lambda_{k-1}=\lambda_k=\lambda_{k+1}=\lambda$. Consequently, $\lambda_{k-n}=\lambda_{k-n+1}=\lambda-1$, 
$\lambda_{k+n-1}=\lambda_{k+n}=\lambda+1$ and $\rho_{k-1}=\rho_{k+n-1}=\rho$, $\rho_{k-n}=\rho_{k+n}=\rho+1$, $\rho_{k+1}=\rho_{k-n+1}=
\rho+2$. Therefore, \eqref{weights} yields
\begin{align*}
\xi_L&(y_v)\\
&=\xi_L(\phhi_{k-1})+\xi_L(\phhi_{k+n})+\xi_L(\phhi_{k-n+1})-\xi_L(\phhi_{k+1})-\xi_L(\phhi_{k-n})-\xi_L(\phhi_{k+n-1})\\
&=(n-2-\lambda)\Delta(1,n)+\Delta(\rho,n)+(n-2-\lambda-1)\Delta(1,n)+\Delta(\rho+1,n)\\
&+(n-2-\lambda+1)\Delta(1,n)+\Delta(\rho+2,n)-(n-2-\lambda)\Delta(1,n)-\Delta(\rho+2,n)\\
&-(n-2-\lambda+1)\Delta(1,n)-\Delta(\rho+1,n)-(n-2-\lambda-1)\Delta(1,n)-\Delta(\rho,n)=0.
\end{align*}

{\it Case 2:} $\lambda_{k-1}=\lambda_k=\lambda$, $\lambda_{k+1}=\lambda+1$. Consequently, $\lambda_{k-n}=\lambda-1$, $\lambda_{k-n+1}=\lambda$, 
$\lambda_{k+n-1}=\lambda_{k+n}=\lambda+1$ and $\rho_{k-1}=\rho_{k+n-1}=n-1$, $\rho_{k-n}=\rho_{k+n}=n$, $\rho_{k+1}=\rho_{k-n+1}=1$. Therefore, \eqref{weights} yields
\begin{align*}
\xi_L&(y_v)\\
&=\xi_L(\phhi_{k-1})+\xi_L(\phhi_{k+n})+\xi_L(\phhi_{k-n+1})-\xi_L(\phhi_{k+1})-\xi_L(\phhi_{k-n})-\xi_L(\phhi_{k+n-1})\\
&=(n-2-\lambda)\Delta(1,n)+\Delta(n-1,n)+(n-2-\lambda-1)\Delta(1,n)+\Delta(n,n)\\
&+(n-2-\lambda)\Delta(1,n)+\Delta(1,n)-(n-2-\lambda-1)\Delta(1,n)-\Delta(1,n)\\
&-(n-2-\lambda+1)\Delta(1,n)-\Delta(n,n)-(n-2-\lambda-1)\Delta(1,n)-\Delta(n-1,n)=0.
\end{align*}

{\it Case 3:} $\lambda_{k-1}=\lambda$, $\lambda_k=\lambda_{k+1}=\lambda+1$. Consequently, $\lambda_{k-n}=\lambda_{k-n+1}=\lambda$,  
$\lambda_{k+n-1}=\lambda+1$, $\lambda_{k+n}=\lambda+2$ and $\rho_{k-1}=\rho_{k+n-1}=n$, $\rho_{k-n}=\rho_{k+n}=1$, 
$\rho_{k+1}=\rho_{k-n+1}=2$. Therefore, \eqref{weights} yields
\begin{align*}
\xi_L&(y_v)\\
&=\xi_L(\phhi_{k-1})+\xi_L(\phhi_{k+n})+\xi_L(\phhi_{k-n+1})-\xi_L(\phhi_{k+1})-\xi_L(\phhi_{k-n})-\xi_L(\phhi_{k+n-1})\\
&=(n-2-\lambda)\Delta(1,n)+\Delta(n,n)+(n-2-\lambda-2)\Delta(1,n)+\Delta(1,n)\\
&+(n-2-\lambda)\Delta(1,n)+\Delta(2,n)-(n-2-\lambda-1)\Delta(1,n)-\Delta(2,n)\\
&-(n-2-\lambda)\Delta(1,n)-\Delta(1,n)-(n-2-\lambda-1)\Delta(1,n)-\Delta(n,n)=0.
\end{align*}

The right weight of $y_v$ is treated in a similar way.
\end{proof}

The verification of compatibility conditions in Proposition \ref{compatchar} is based on relations \eqref{brafygh}, \eqref{bra_k_l} and 
\eqref{weights}. The case when the vertices $i$ and $j$ belong to different regions in $\bar Q_n$, or are simultaneously non-diagonal 
$g$- or $h$-vertices, is treated in the same way as in \cite{GSVDouble}. For the case when both $i$ and $j$ are $\phhi$-vertices, the check is based on the following claim  in which we assume $\varkappa_{N+1}=0$.

\begin{proposition}\label{lambdamukappa}
{\rm (i)} For any $k$, $1\le k\le N$, either $\lambda_k=\mu_k$ and $\varkappa_k=\varkappa_{k+1}+1$ or $\lambda_k=\mu_k-1$ and
$\varkappa_k=\varkappa_{k+1}$.

 {\rm (ii)} For any $k$, $1\le k\le N-n$,
\begin{equation}\label{almostperiodic}
\varkappa_k-\varkappa_{k+1}=\begin{cases} \varkappa_{k+n}-\varkappa_{k+n+1}, \qquad \sigma_k\ne n-1,\\
                                          \varkappa_{k+n}-\varkappa_{k+n+1}+1, \qquad \sigma_k=n-1.
														\end{cases}
\end{equation}
\end{proposition}

\begin{proof} Follows immediately from \eqref{kappas} and the definitions of $\lambda_k$, $\rho_k$, $\mu_k$, 
and $\sigma_k$.
\end{proof}

Let us check the compatibility condition for the case when
 $v$ is the vertex associated with the function $\phhi_i$ and 
$u$ is the vertex associated with the function $\phhi_j$, $n+1\le j\le N-n$. There are five possible cases: (i) $i\le j-n$; 
(ii) $j-n+1\le i\le j-1$; (iii) $i=j$; (iv) $j+1\le i\le j+n-1$; (v) $j+n\le i$. 

In the first case, the left hand side of the condition in Proposition \ref{compatchar} is computed
directly via \eqref{bra_k_l}. The first term equals 
\begin{align*}
\frac12\langle\xi_L(\phhi_i)\xi_L(y_u)\rangle&=\frac12\langle\xi_L(\phhi_i)\xi_L\left({\phhi_{j-1}}/{\phhi_{j}}\right)\rangle-
\frac12\langle\xi_L(\phhi_i)\xi_L\left({\phhi_{j+n-1}}/{\phhi_{j+n}}\right)\rangle\\
&+
\frac12\langle\xi_L(\phhi_i)\xi_L\left({\phhi_{j}}/{\phhi_{j+1}}\right)\rangle-
\frac12\langle\xi_L(\phhi_i)\xi_L\left({\phhi_{j-n}}/{\phhi_{j-n+1}}\right)\rangle
\end{align*}
and vanishes by Lemma \ref{gtaind}. 
Similarly, the second term equals 
\begin{align*}
-\frac12\langle\xi_R(\phhi_i)\xi_R(y_u)\rangle&=-\frac12\langle\xi_R(\phhi_i)\xi_R\left({\phhi_{j-1}}/{\phhi_{j}}\right)\rangle+
\frac12\langle\xi_R(\phhi_i)\xi_R\left({\phhi_{j+n-1}}/{\phhi_{j+n}}\right)\rangle\\
&-
\frac12\langle\xi_R(\phhi_i)\xi_R\left({\phhi_{j}}/{\phhi_{j+1}}\right)\rangle+
\frac12\langle\xi_R(\phhi_i)\xi_R\left({\phhi_{j-n}}/{\phhi_{j-n+1}}\right)\rangle
\end{align*}
and vanishes by Lemma \ref{gtaind}. 
The third term equals 
\[
(\varkappa_{j-1}-\varkappa_j)-(\varkappa_{j+n-1}-\varkappa_{j+n})+(\varkappa_j-\varkappa_{j+1})-(\varkappa_{j-n}-\varkappa_{j-n+1})
\]
 and vanishes  by \eqref{almostperiodic} since $\sigma_{j-1}=\sigma_{j-n}$.

In the second case, in order to use \eqref{bra_k_l}, one has to swap the  arguments in the brackets $\{\log\phhi_i,\log\phhi_{j-n}\}_D$ and  
$\{\log\phhi_i,\log\phhi_{j-n+1}\}_D$. Consequently, the first term contributes 
\[
\langle\xi_L(\phhi_i)  \xi_L(\phhi_{j-n}/\phhi_{j-n+1})\rangle,
\] 
the second term contributes 
\[
-\langle\xi_R(\phhi_i)  \xi_R(\phhi_{j-n}/\phhi_{j-n+1})\rangle,
\]
 and the third term contributes $\varkappa_{j-n}-\varkappa_{j-n+1}$. 
By Proposition \ref{lambdamukappa}(i), either $\varkappa_{j-n}-\varkappa_{j-n+1}=0$ and  
$\lambda_{j-n}=\mu_{j-n}-1$ which implies
\[
\langle\xi_L(\phhi_i)  \xi_L(\phhi_{j-n}/\phhi_{j-n+1})\rangle=\langle\xi_R(\phhi_i) 
\xi_R(\phhi_{j-n}/\phhi_{j-n+1})\rangle=n-\lambda_{j-n}-2,
\]
 or
$\varkappa_{j-n}-\varkappa_{j-n+1}=1$ and  $\lambda_{j-n}=\mu_{j-n}$ which implies 
\begin{align*}
\langle\xi_L(\phhi_i)  \xi_L(\phhi_{j-n}/\phhi_{j-n+1})\rangle&=n-\lambda_{j-n}-2, \\
\langle\xi_R(\phhi_i) \xi_R(\phhi_{j-n}/\phhi_{j-n+1})\rangle&=n-\lambda_{j-n}-1.
\end{align*}
In both cases the total additional contribution vanishes.

Cases (iii)--(v) are treated in a similar way.

\subsection{Completeness} \label{complet}
We start with the following proposition.

\begin{proposition}
\label{tallmatrix}
There exists an $(n-1)\times(n-1)$ unipotent upper triangular matrix $G=G(X, Y)$ such that

{\rm(i)} entries of $G$ are rational functions in $X, Y$ whose denominators are monomials in cluster variables 
$\phhi_{k n +1}$, $k=1,\ldots, n-2$, and 

{\rm(ii)} the $(2n-1)\times n$ matrix $S=\begin{bmatrix} Y\\ G X_{[2,n]}\end{bmatrix}$ satisfies
\begin{equation}
\label{tallmatrixminors}
 \det S_{[ n-i+1 +k, n+k]}^{[n-i+1 , n]} = \frac{\phhi_{k n -i +1 }} {\phhi_{k n +1}}, 
\qquad k=1,\ldots, n-2,\quad i=1,\ldots, n.
\end{equation}
\end{proposition}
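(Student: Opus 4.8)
The plan is to construct $G$ explicitly by a Gauss-type elimination procedure on the matrix $\Phi = \Phi(X,Y)$, exploiting its block bidiagonal structure. First I would recall from \eqref{Phi} that $\Phi$ has $n-1$ block rows, with block column $j$ (each block of size $(n-1)\times(n-1)$) containing $\Y$ on the diagonal and $\X$ on the subdiagonal, where $\X = X_{[2,n]}$, $\Y = Y_{[2,n]}$. The key observation is that the minor $\phhi_{kn+1} = \det \Phi_{[kn+1,N]}^{[kn+1,N]}$ is the determinant of the trailing submatrix obtained by deleting the first $k$ block rows and the first $k-1$ block columns together with the first row of block column $k$; by the bidiagonal structure this trailing minor factors through $\det \Y^{n-1-k}$ times a Schur-complement-type expression. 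The matrix $G$ I want is precisely the change of basis (acting on the single block $\X$) that triangularizes the "staircase" so that the ratios $\phhi_{kn-i+1}/\phhi_{kn+1}$ become honest $i\times i$ trailing minors of the stacked matrix $S = \begin{bmatrix} Y \\ G\X\end{bmatrix}$.

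The key steps, in order, are: (1) Set up the block LU-type factorization of $\Phi$: perform block row reduction from the bottom up, using the invertible blocks $\Y$ (note $\Y$ need not be square — it is $(n-1)\times(n-1)$ only after the indexing convention, so I should verify invertibility of the relevant trailing minors, which holds generically and, more importantly, the denominators that appear are exactly powers of the $\phhi_{kn+1}$). (2) Identify $G$ as the product of the elementary unipotent factors that accumulate on the $\X$-block during this reduction; unipotence and upper-triangularity follow because each elimination step only adds multiples of later rows to earlier ones within a fixed block, and the pivots are arranged so that the diagonal stays $1$. (3) Track denominators: each division in the elimination is by a trailing minor of $\Phi$ of the form $\phhi_{kn+1}$, giving claim (i). (4) Read off \eqref{tallmatrixminors}: after the reduction, the minor $\det\Phi_{[kn-i+1,N]}^{[kn-i+1,N]} = \phhi_{kn-i+1}$ equals $\phhi_{kn+1}$ (the already-cleared part) times the $i\times i$ trailing minor of the row block where $\X$ has been replaced by $G\X$ stacked under $\Y$ — i.e. times $\det S_{[n-i+1+k, n+k]}^{[n-i+1,n]}$. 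This is really just Sylvester's identity / the Desnanot–Jacobi relation organized along the staircase, combined with the observation that the relevant window of $S$ reproduces the relevant window of the reduced $\Phi$.

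I would prove (4) by induction on $k$. For $k=1$ the window $S_{[n-i+2,n+1]}^{[n-i+1,n]}$ is literally $\begin{bmatrix}(\text{last } i-1 \text{ rows of } \Y)\\ (\text{first row of } G\X)\end{bmatrix}$ arranged so its determinant is $\phhi_{n-i+1}/\phhi_{n+1}$ — here $G = \one$ works if $\phhi_{n+1} = \det\Y$, and otherwise $G$ is forced by requiring this identity, which is a linear condition determining the entries of $G$ row by row (hence $G$ upper triangular unipotent, entries with denominators powers of $\phhi_{n+1}$ — wait, one must check the denominator is $\phhi_{n+1}$, not some other minor; this is where the precise indexing matters). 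For the inductive step, I would use that deleting one more block row and shifting one more block column in $\Phi$ corresponds to passing from $S$'s window at level $k$ to its window at level $k+1$, and the block bidiagonal structure means the only new data introduced is one more copy of $\X$, cleared by one more division by $\phhi_{(k+1)n+1}$.

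The main obstacle I anticipate is step (3)/(4): keeping the bookkeeping of \emph{which} trailing minor appears in each denominator, and verifying that the Schur complements collapse exactly to the stated ratios rather than to ratios involving extra spurious factors of $\det\Y$ or other minors. In other words, the combinatorics of the staircase — precisely matching the index windows $[n-i+1+k, n+k]$ in $S$ against the windows $[kn-i+1, N]$ in $\Phi$ — is delicate, and getting the off-by-one indices right (the role of the distinguished first row in block column $k$, reflected in the "$+1$" in $\phhi_{kn+1}$) is where the real work lies. The linear algebra (unipotence, rationality) is routine once the correct $G$ is written down; the content is that such a $G$ exists and its denominators are controlled by cluster variables already in the seed.
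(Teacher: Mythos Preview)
Your overall strategy --- row-reduce $\Phi$ while preserving its trailing principal minors, and read off $G$ as the accumulated unipotent factor on the $\X$-blocks --- is exactly the paper's approach. But the mechanism you propose to drive the elimination does not work as stated, and this is a genuine gap, not just bookkeeping.

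The block $\Y = Y_{[2,n]}$ is $(n-1)\times n$, not square; you note this yourself but then hand-wave past it. There is no ``block LU using the invertible blocks $\Y$'' to set up, and this is why your base case goes wrong (``$G=\one$ works if $\phhi_{n+1}=\det\Y$'' is meaningless: $\det\Y$ does not exist). What the paper actually does is clear \emph{one scalar row at a time}: at step $k$ it uses the entire trailing submatrix $\Psi_k = \Phi_{[kn+1,N]}^{[kn+1,N]}$ --- whose determinant is precisely $\phhi_{kn+1}$ --- to eliminate row $kn$, the row immediately above $\Psi_k$. This is why the denominators are $\phhi_{kn+1}$: it falls out of Cramer's rule for this single-row clearing, not from any Schur complement of a $\Y$-block. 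The subtle point you are missing is that to keep the staircase (block bidiagonal) shape after clearing that one row, one must left-multiply block rows $2$ through $n$ by a block \emph{T\"oplitz} upper-triangular matrix whose diagonal block $G_{k1}$ is unipotent and differs from the identity only in its $k$th row. The effect is that every $\X$-block gets multiplied on the left by the same $G_{k1}$, while the $Y$-blocks in block rows $\ge k+1$ have their top row zeroed out. After $n-2$ such steps, $G = G_{n-2,1}\cdots G_{11}$ is unipotent upper triangular (its $k$th row comes from $G_{k1}$), and the reduced $\Phi$ is block lower triangular in each relevant window, from which \eqref{tallmatrixminors} is immediate. Your inductive scheme in $k$ is then unnecessary: the identity holds for all $k$ simultaneously once the elimination is complete, because the minors in question sit inside already-triangularized windows.
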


\begin{proof} We will establish the claim by applying  to the matrix $\Phi$ a sequence of $n-2$ transformations that do not affect 
its trailing principal minors. After the $k$th transformation, every $X$-block of $\Phi$ will be multiplied on the left by the same 
unipotent upper triangular matrix $G_k$, while the $Y$-block in the $j$th block row of $\Phi$ will be replaced by 
$(G_k)_{[j,n-1]}^{[j,n-1]}Y_{[j+1,n]}$ preceded with $j-1$ zero rows for $j=2,\dots, k$ and by $Y_{[k+1,n]}$ 
preceded with $k-1$ zero rows for $j=k+1,\dots,n-1$.

On the first step, we use the submatrix $\Psi_{1} = \Phi_{[n+1, N]}^{[n+1, N]}$ to eliminate all the entries in the
row immediately above $\Psi_1$  in the submatrix $\Phi^{[n+1, N]}$. To this effect, we multiply the block rows from $2$ to $n$ of $\Phi$ on the left by a 
block T\"oplitz upper triangular matrix
\[
\begin{bmatrix} G_{11} & G_{12} &\dots & G_{1,n-1} \\ 0 & G_{11} &\dots & G_{1,n-2}\\
\vdots & \vdots& \ddots& \vdots\\0 & 0 & \dots & G_{11} \end{bmatrix}, 
\]
where $G_{1j}$ for $j>1$ are $(n-1)\times(n-1)$ matrices with the only non-zero entries lying in the first row 
and $G_{11}$ is  a unipotent upper triangular matrix with the only off-diagonal non-zero entries lying in the first row. 
Note that the denominator of the non-trivial entries in $G_{1j}$ is equal to $\det \Psi_{1} = \phhi_{n +1}$. 
As a result, all $X$-blocks of $\Phi$ are multiplied by $G_1=G_{11}$, and $Y$-blocks in the block rows $2,\dots,n-1$ 
are replaced by $Y_{[3,n]}$ preceded with the zero row. All zero blocks are not changed. The obtained matrix is block lower triangular, and hence \eqref{tallmatrixminors} is valid for $k=1$, $i=1,\dots,n$, for the matrix 
$S_1=\begin{bmatrix} Y\\ G_{1} X_{[2,n]}\end{bmatrix}$.

On the second step, we use the submatrix $\Psi_{2} = \Phi_{[2n+1, N]}^{[2n+1, N]}$  to eliminate all the entries in the
row immediately above $\Psi_2$ in the submatrix formed by columns $[2n+1, N]$ of the matrix obtained on the
previous step . To this effect, we multiply the block rows from $2$ to $n$ of this matrix on the left by a block
upper triangular matrix
\[
\begin{bmatrix} G_{21} & 0 & 0 & \dots & 0\\
0 & G_{21} & G_{22} &\dots & G_{2,n-2} \\
0 & 0 & G_{21} &\dots & G_{2,n-3} \\
\vdots & \vdots & \vdots& \ddots& \vdots\\0 & 0 & 0 & \dots & G_{21} \end{bmatrix}, 
\]
where $G_{2j}$ for $j>1$ are $(n-1)\times(n-1)$ matrices with the only non-zero entries lying in the second row 
and $G_{21}$ is a unipotent upper triangular with the same property. 
Note that the denominator of the non-trivial entries in $G_{2j}$ is equal to $\det \Psi_{2} = \phhi_{2n +1}$. 
As a result, all $X$-blocks  are multiplied by $G_2=G_{21}G_{11}$, the $Y$-block in the second block row is replaced
by $(G_2)_{[2,n-1]}^{[2,n-1]}Y_{[3,n]}$ preceded by the zero row, and $Y$-blocks in the block rows $3,\dots,n-1$ are replaced by $Y_{[4,n]}$ preceded by two zero rows. The submatrix of the obtained matrix lying in rows and columns $n+1,\dots,N$ is 
block lower triangular, and hence \eqref{tallmatrixminors} is valid for $k=2$, $i=1,\dots,n$, for the matrix 
$S_2=\begin{bmatrix} Y\\ G_{2} X_{[2,n]}\end{bmatrix}$. It is also valid for $k=1$, $i=1,\dots,n$, since the corresponding minors coincide with those for the matrix $S_1$.

Continuing in the same fashion, we define $G_{31}, \ldots, G_{n-2,1}$ such that the product $G= G_{n-2,1} \cdots G_{11}$ satisfies 
properties (i) and (ii) of the claim. Note that the $k$th row of $G$ coincides with the $k$th row of $G_{k1}$.
\end{proof}

The following proposition can be proved in a similar way.

\begin{proposition}
\label{longmatrix}
There exists an $n\times n$ unipotent lower triangular matrix $H=H(X, Y)$ such that

{\rm (i)} entries of $H$ are rational functions in $X$, $Y$ whose denominators are monomials in cluster variables 
$\phhi_{k (n-1) +1}$, $k=1,\ldots, n-1$, and 

{\rm (ii)} the $(n-1)\times 2 n$ matrix $T=\left [X_{[2,n]} \ Y_{[2,n]} H \right ]$ satisfies
\begin{equation}
\label{longmatrixminors}
 \det T_{[ n-i, n-1]}^{[n+k-i+1 , n+k]} = \frac{\phhi_{(n-k) (n -1) - i +1 }} {\phhi_{(n-k) (n-1) +1}}, \qquad k=1,\ldots, n-1, 
\quad i=1,\ldots, n-1.
\end{equation}
\end{proposition}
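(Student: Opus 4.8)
The plan is to mirror the proof of Proposition \ref{tallmatrix} with the roles of $X$ and $Y$, of rows and columns, and of upper and lower triangularity all interchanged. Concretely, I would apply to the matrix $\Phi=\Phi(X,Y)$ a sequence of $n-1$ right multiplications by block-Töplitz lower triangular matrices, each of which eliminates the entries in the column immediately to the \emph{right} of a suitable trailing submatrix $\Psi'_k$ of $\Phi$ and, crucially, does not change the trailing principal minors $\phhi_i$. On the $k$th step one uses $\Psi'_k=\Phi_{[k(n-1)+1,N]}^{[k(n-1)+1,N]}$, whose determinant is $\phhi_{k(n-1)+1}$; the column operations introduce denominators equal exactly to this minor, which accounts for the denominator statement (i) with the cluster variables $\phhi_{k(n-1)+1}$, $k=1,\dots,n-1$. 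Since every step multiplies each $Y$-block of $\Phi$ on the right by the same unipotent lower triangular matrix — call the cumulative product $H=H_{n-1,1}\cdots H_{11}$, an $n\times n$ unipotent lower triangular matrix whose $k$th column coincides with the $k$th column of $H_{k1}$ — while leaving the $X$-blocks and zero blocks appropriately modified, the transformed matrix becomes block \emph{upper} triangular in the relevant rows and columns, and this triangularity is what forces the minor identities \eqref{longmatrixminors}.

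The key steps, in order: (1) Set up the block column operation on the first step using $\Psi'_1=\Phi_{[n,N]}^{[n,N]}$ to clear the column just to its right, writing the multiplier as a block-Töplitz lower triangular matrix built from $(n-1)\times(n-1)$ blocks $H_{1j}$ supported in their first column, with nonzero entries having denominator $\det\Psi'_1=\phhi_n$ — wait, I should be careful to index so that the $\Psi'_k$ have determinants $\phhi_{k(n-1)+1}$ as claimed; the correct submatrix to use on step $k$ is $\Phi_{[k(n-1)+1,N]}^{[k(n-1)+1,N]}$. (2) Check that this column operation multiplies every $Y$-block on the right by $H_1$ and replaces the $X$-blocks in the appropriate block rows by $X_{[2,n]}$ bordered by a zero column, leaving the matrix block lower triangular in rows and columns $n,\dots,N$ so that \eqref{longmatrixminors} holds for $k=1$. (3) Iterate: on step $k$ use the trailing submatrix of the matrix produced on step $k-1$, observe that the previously established minor identities for $k'<k$ are preserved because the corresponding minors literally coincide with those of the earlier stage, and that the new identities for the current $k$ follow from the freshly produced block triangularity. (4) Assemble $H=H_{n-1,1}\cdots H_{11}$ and read off that $T=[X_{[2,n]}\ Y_{[2,n]}H]$ satisfies (i) and (ii).

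The only genuine obstacle is bookkeeping: getting the index shifts exactly right so that the trailing submatrices used at each step really do have determinants $\phhi_{k(n-1)+1}$ (rather than $\phhi_{k(n-1)}$ or $\phhi_{kn+1}$), and so that after $n-1$ steps the pattern of zero columns bordering the $X$-blocks and the surviving $(H_k)$-conjugated $Y$-blocks produces precisely the submatrix $T_{[n-i,n-1]}^{[n+k-i+1,n+k]}$ whose determinant is $\phhi_{(n-k)(n-1)-i+1}/\phhi_{(n-k)(n-1)+1}$. Here the reindexing $k\mapsto n-k$ on the right-hand side of \eqref{longmatrixminors} relative to the left-hand side of \eqref{tallmatrixminors} reflects that column elimination proceeds "from the bottom-right inward" in a way dual to the row elimination of Proposition \ref{tallmatrix}; verifying this dualization is bijective and exhaustive is the part that requires care, but it is entirely parallel to the argument already given, so I would present it by indicating the transformations and the resulting triangular shape and leaving the routine verification of the minor formulas to the reader, exactly as the phrase "can be proved in a similar way" suggests.
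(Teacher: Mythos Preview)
Your proposal is correct and is precisely what the paper does: the paper's own proof consists only of the remark that the proposition ``can be proved in a similar way'' to Proposition~\ref{tallmatrix}, and your outline---column rather than row elimination, effected by right multiplication by block-T\"oplitz lower triangular matrices, with the $Y$-blocks absorbing the cumulative unipotent lower triangular factor $H$ and the trailing submatrices $\Psi'_k=\Phi_{[k(n-1)+1,N]}^{[k(n-1)+1,N]}$ supplying the denominators $\phhi_{k(n-1)+1}$---is the intended dualization. One small slip worth fixing when you write it out: the column cleared at each step is the one immediately to the \emph{left} of $\Psi'_k$ (column $k(n-1)$), not to the right, since $\Psi'_k$ already occupies the bottom-right corner of $\Phi$; with that correction your bookkeeping sketch goes through, and the resulting matrix becomes block \emph{upper} triangular (as you say in your first paragraph, though you write ``lower'' in step~(2)).
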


We will use Propositions \ref{tallmatrix}, \ref{longmatrix} to establish the following

\begin{proposition}
\label{matrixentries}
All matrix entries of $X_{[2,n]}$ and $Y$ are cluster variables in $\NGCC_n^D$.
\end{proposition}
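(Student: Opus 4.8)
The idea is to exhibit a sequence of generalized mutations (actually ordinary mutations, since all the vertices involved are non-special) starting from the initial seed $\bar\Sigma_n$ that produces, as cluster variables, all the entries of $Y$ and of $X_{[2,n]}$. The two auxiliary matrices $S$ and $T$ from Propositions \ref{tallmatrix} and \ref{longmatrix} are the bridge: their relevant minors are ratios of $\varphi$-functions (cluster variables), and $S$, $T$ are built from $X_{[2,n]}$, $Y$ by left/right multiplication by unipotent triangular matrices whose entries have denominators that are monomials in the cluster variables $\varphi_{kn+1}$, resp.\ $\varphi_{k(n-1)+1}$. So any minor of $X_{[2,n]}$ or $Y$ can be recovered, up to such monomial factors, from minors of $S$ or $T$. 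The strategy is to recognize the $\varphi$-region of $\bar Q_n$ (with appropriate vertices frozen) as carrying the standard cluster structure on a rectangular matrix — more precisely two such structures, one for the "tall" matrix $S$ and one for the "long" matrix $T$ — and to import the known fact (from the Berenstein–Fomin–Zelevinsky / double Bruhat cell theory, as used in \cite{GSVstaircase}) that in the standard cluster structure on matrices all matrix entries are cluster variables.

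First I would set up the dictionary between the $\varphi$-vertices of $\bar Q_n$ and the vertices of a standard double-Bruhat-cell quiver. Using Proposition \ref{tallmatrix}, the $\varphi_{kn-i+1}/\varphi_{kn+1}$ are exactly the trailing minors $\det S_{[n-i+1+k,\,n+k]}^{[n-i+1,\,n]}$ of the $(2n-1)\times n$ matrix $S=\begin{bmatrix}Y\\ GX_{[2,n]}\end{bmatrix}$; these are the "initial minors along a staircase" for a matrix of that shape, and the subquiver of $\bar Q_n$ spanned by the relevant $\varphi$- and $h$-vertices (with the $g$-vertices and the bottom boundary frozen) should be isomorphic to the standard quiver whose mutations generate all minors — in particular all entries — of such a matrix. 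The same is done with Proposition \ref{longmatrix} for $T=[X_{[2,n]}\ Y_{[2,n]}H]$ to cover the entries of $X_{[2,n]}$ and the remaining entries of $Y$. Then I would invoke the standard result: in the standard cluster structure on the coordinate ring of a rectangular matrix (or on the corresponding double Bruhat cell), every matrix entry is a cluster variable obtained by a finite mutation sequence from the initial "solid minors" seed. Applying the isomorphism of quivers, those same mutation sequences are legitimate in $\NGCC_n^D$ (the special vertex $(2,1)$ is never touched, so no generalized exchange relations intervene, and the frozen $g$- and $h$-vertices just sit in the coefficients).

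The one subtlety is that $S$ and $T$ are not literally $X_{[2,n]}$, $Y$ but are obtained from them by multiplication by the unipotent factors $G$, $H$, whose entries are rational in $X$, $Y$ with $\varphi$-monomial denominators. So what the mutation sequence directly produces are the entries of $S$ (resp.\ $T$), i.e.\ polynomials in the entries of $Y$ and $GX_{[2,n]}$, not the entries of $X_{[2,n]}$ or $Y$ themselves. To finish I would argue that the entries of $Y$ sitting in the top block rows of $S$ and $T$ are unaffected by $G$, $H$ (those are genuinely entries of $Y$), so they come out as cluster variables directly; and for the entries of $X_{[2,n]}$ I would use that $G$ is unipotent upper triangular — so its bottom rows are trivial — to recover the bottom rows of $X_{[2,n]}$ first, then peel off rows of $G$ inductively, each time expressing the next entry of $X_{[2,n]}$ as a Laurent expression in already-obtained cluster variables and the $\varphi_{kn+1}$; a cluster variable times a Laurent monomial in cluster variables is still a Laurent polynomial in the cluster, hence lies in the upper cluster algebra — but to conclude it is itself a \emph{cluster variable} one needs the stronger statement, which is why I would instead route everything through the $T$-matrix of Proposition \ref{longmatrix}, whose construction is arranged (with $H$ unipotent \emph{lower} triangular, acting on the right) precisely so that the matrix entries of $X_{[2,n]}$ appear verbatim as entries of $T$. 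That last bookkeeping — checking that between $S$ and $T$ one genuinely catches every entry of $X_{[2,n]}$ and of $Y$ verbatim, not merely up to monomial correction — is the main obstacle, and it is exactly the content that Lemma \ref{firstrow} is reserved to handle for the first row of $X$, with $X_{[2,n]}$ and $Y$ covered here by the two propositions. I would therefore present the proof as: (1) quiver isomorphism $\varphi$-region $\cong$ standard matrix quiver, for $S$ and (separately) for $T$; (2) import "all entries are cluster variables" from the standard case; (3) pull back the mutation sequences to $\NGCC_n^D$; (4) read off that the verbatim entries of $Y$ (from $S$) and of $X_{[2,n]}$ (from $T$) are among the resulting cluster variables.
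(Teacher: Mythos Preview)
Your overall strategy coincides with the paper's: freeze the vertices corresponding to $\varphi_{kn+1}$ (respectively $\varphi_{k(n-1)+1}$), identify what remains with the standard cluster structure on $\Mat_{(2n-1)\times n}$ (respectively $\Mat_{(n-1)\times 2n}$) via the matrices $S$ and $T$, and import the fact that all matrix entries are cluster variables in that standard structure.

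There is, however, a genuine gap at your step (3)$\to$(4). You yourself observe that the minors of $S$ in \eqref{tallmatrixminors} are the \emph{ratios} $\varphi_{kn-i+1}/\varphi_{kn+1}$, not the $\varphi$'s themselves. Hence the initial seed of the standard structure on $S$ is obtained from the $\varphi$-seed not by a quiver isomorphism but by (i) deleting the dashed edges of $\bar Q_n$ and (ii) rescaling each initial variable by a monomial in the now-frozen $\varphi_{kn+1}$. Consequently, when the mutation sequence that produces $y_{ij}$ in $\widetilde{\mathcal C}_{(2n-1)\times n}$ is executed in $\NGCC_n^D$, the resulting cluster variable is $y_{ij}\cdot M$ for some Laurent monomial $M$ in the $\varphi_{kn+1}$ (this is exactly the mechanism of \cite[Lemma~8.4]{GSVMem} that the paper invokes). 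Your observation that ``the entries of $Y$ sit verbatim in the top block of $S$'' does not neutralize $M$: the obstruction is not $G$ or $H$, it is the rescaling of the initial cluster variables. The same issue occurs verbatim for $X_{[2,n]}$ via $T$.

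The paper closes this gap with a short but essential argument you are missing: since the resulting cluster variable $y_{ij}\cdot M$ is a regular function and each $\varphi_{kn+1}$ is irreducible (\cite[Lemma~4.2]{GSVstaircase}), all exponents in $M$ are nonnegative; on the other hand, an induction along the mutation sequence shows that every cluster variable obtained, viewed as a polynomial in each frozen $\varphi_{kn+1}$, has nonzero constant term, forcing all exponents in $M$ to vanish. Without this $M=1$ argument you only obtain $y_{ij},\,x_{ij}\in\UU(\NGCC_n^D)$, which is strictly weaker than the claim of the proposition.
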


\begin{proof}  We will use the comparison with the standard  cluster structure $\widetilde\CC_{l\times m}$ on  $\Mat_{l\times m}$ which is isomorphic to the cluster structure on the big cell in the Grassmannian $\Gr(l, l+m)$ described in \cite[Section 4.2.2]{GSVb} via the identification of a matrix $A\in \Mat_{l\times m}$ with a representative $\left [\one_l\  (A W_m) \right ]$ of an element in  $\Gr(l, l+m)$, where $W_m$ is the $m\times m$ antidiagonal matrix.
First, we  establish the claim for matrix entries of $Y=(y_{ij})_{i,j=1}^n$. Let us temporarily treat the vertices in $\bar Q_n$ that correspond to variables
$\phhi_{k n +1}$, $k=0,\ldots, n-2$, as frozen. Then the vertex corresponding to $g_{11}$ becomes isolated. Denote the quiver formed by the rest of the non-isolated vertices by $\widehat Q_n$, and let $\widehat \FFF_n\subset\bar \FFF_n$ be the corresponding subset of cluster variables. 

Define a new collection of variables $\tilde\FFF_n=\{\tilde f\: f\in\widehat\FFF_n\}$ via $\tilde\phhi_{k n -i +1 } = 
\frac{\phhi_{k n -i +1 }} {\phhi_{k n +1}}$, $k=1,\ldots, n-2$, $i=1,\ldots n$, and $\tilde f=f$ for all other 
$f\in\widehat \FFF_n$. Denote by $\tilde Q_n$ the quiver obtained from $\widehat Q_n$ via deletion of all edges 
that are dashed in Fig.~\ref{example_quiver}. 

Let us now define a $(2n-1)\times n$ matrix $S$  as in Proposition \ref{tallmatrix}. Then \eqref{tallmatrixminors} ensures that the collection $\tilde\FFF_n$ consists of  all dense minors of $S$ containing entries of the last row or column of $S$, where the minor that has an $(i,j)$-entry of $S$ in a top left corner is attached to the $(i,j)$ vertex in the grid that describes $\tilde Q_n$. Viewed this way, $\widetilde\Sigma=\left (\tilde\FFF_n, \tilde Q_n\right )$ becomes the initial seed for the standard cluster structure $\widetilde\CC_{(2n-1)\times n}$ on  $\Mat_{(2n-1)\times n}$. Note that every exchange relation in this seed is obtained via
dividing the corresponding exchange relation of $\NGCC_n^D$ by an appropriate monomial in variables $\phhi_{kn+1}$, 
$k=1,\dots,n-2$, which are frozen in $\widehat\Sigma=\left (\widehat\FFF_n, \hat Q_n\right )$. Applying 
\cite[Lemma 8.4]{GSVMem} repeatedly, we conclude that if $\phhi$ is a cluster variable in $\NGCC_n^D$ obtained via an 
arbitrary sequence of mutations not involving mutations at $\phhi_{k n +1}$,  $k=0,\ldots, n-2$, then the result of the 
same sequence of mutations in $\widetilde\CC_{(2n-1)\times n}$
%the cluster structure with the initial seed $\widetilde\Sigma$ 
is $\tilde\phhi=\frac {\phhi}{M}$, where $M$ is a Laurent monomial in  $\phhi_{k n +1}$, $k=1,\ldots, n-2$.

Since all matrix entries of $S$ are cluster variables in $\widetilde\CC_{(2n-1)\times n}$, the latter observation means 
that, for any $i, j \in [1,n]$, there is a cluster variable $\phhi\in \NGCC_n^D$ obtained via a
sequence of mutations not involving mutations at $\phhi_{k n +1}$, $k=0,\ldots, n-2$, and
such that $\phhi=y_{ij} M$, where $M$ is a Laurent monomial in  $\phhi_{k n +1}$, $k=1,\ldots, n-2$. 
We will now show that $M=1$. Indeed, since $\phhi$ is a regular function in $X, Y$, and all functions 
in $\FFF_n$ are irreducible via \cite[Lemma 4.2]{GSVstaircase},
all factors in $M$ have nonnegative  degrees. On the other hand,  in terms of elements of $\bar\FFF_n$, $\phhi$ is a 
polynomial in $\phhi_{k n +1}$, $k=1,\ldots, n-2$. Furthermore, as a polynomial in each $\phhi_{k n +1}$ it has a nonzero constant term. This claim is obvious for every single mutation away from the initial cluster, and then is verified inductively. This implies that $M=1$.
Thus all matrix entries of $Y$ are cluster variables in $\NGCC_n^D$.

\begin{figure}[ht]
\begin{center}
\includegraphics[width=10cm]{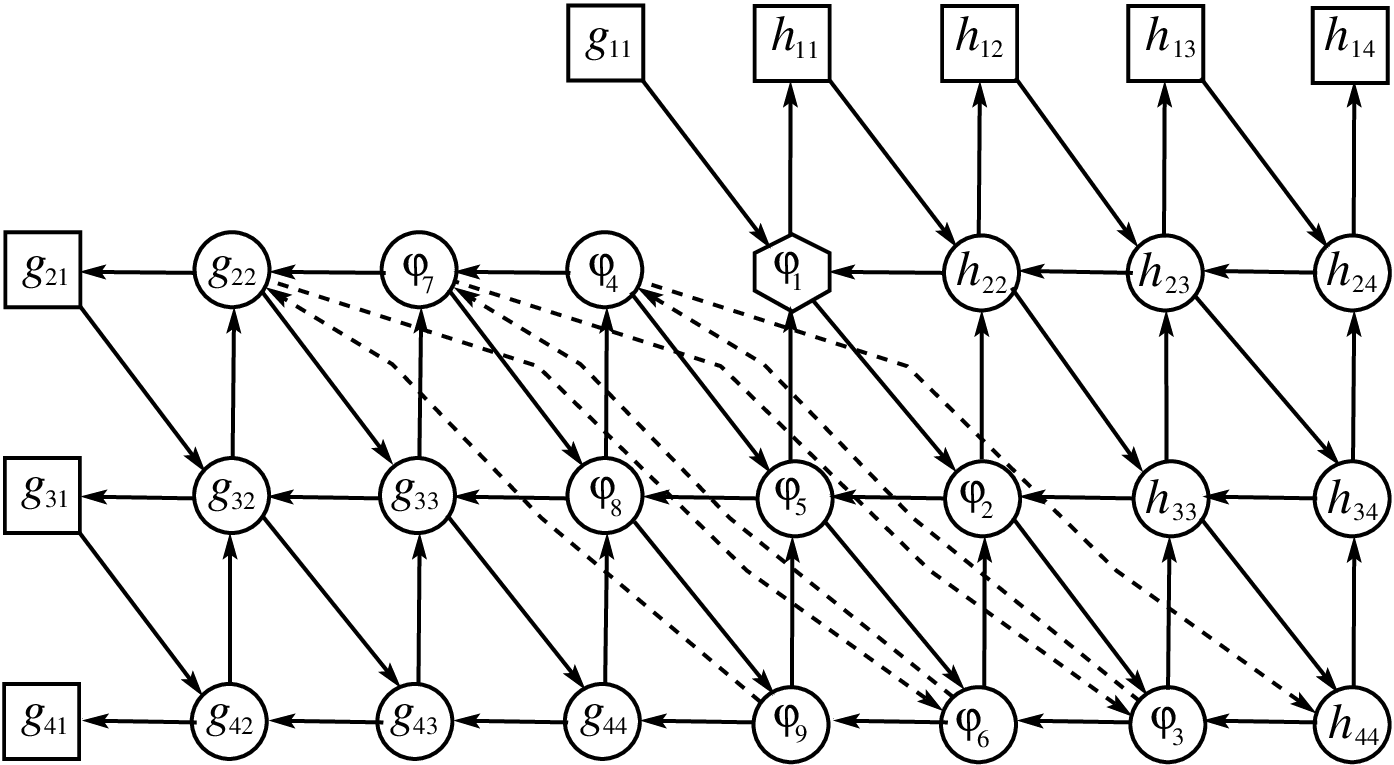}
\end{center}
\caption{Rearranged quiver $\bar Q_4$}
\label{example_quiver_alt}
\end{figure}

To treat matrix entries of $X_{[2,n]}$, we start with rearranging the vertices of $\bar Q_n$. All vertices except for those 
corresponding to $g_{11}$ and $h_{1i}$, $i=1,\dots,n$, are arranged into an $(n-1)\times 2n$ grid. It is obtained by moving
the lower $(n-1)\times n$ part and placing it on the left from the upper $(n-1)\times n$ part; the remaining $n+1$ 
vertices are placed above as an additional row aligned on the right. All former dashed edges become regular, and the 
path  
\[
(n-1,n+1)\to(1,2)\to(n-1,n+2)\to(1,3)\to\dots\to(1,n)\to(n-1,2n)
\]
becomes dashed; 
see Fig.~\ref{example_quiver_alt} for the rearranged version of $\bar Q_4$. To proceed further, we 
temporarily freeze vertices that correspond to $\phhi_{k (n-1) +1}$, $k=0,\ldots, n-2$, and $h_{2j}$, $j=2,\ldots, n$, and compare the result with an initial seed for the standard cluster structure on 
$\Mat_{(n-1)\times 2n}$ defined by the matrix $T$ given in Proposition \ref{longmatrix}.
\end{proof}

Recall that the generalized exchange relation for  $\phhi_1$ is given by
\begin{equation}
\label{longidXY}
\phhi_1 \phhi'_1=\sum_{i=0}^n c_i(X,Y) \left ( (-1)^{n-1} h_{22}\phhi_{n+1}\right )^i \phhi_2 ^{n-i},
\end{equation}
where $\phhi'_1$ is a polynomial in the entries of $X$ and $Y$, see \cite[Section~4]{GSVstaircase}. 
Denote $\bar\FFF'_n=\bar\FFF_n\setminus\{\phhi_1\}\cup\{\phhi_1'\}$.

The following lemma implies that entries of the first row of $X$ belong to the generalized upper cluster 
algebra $\UU (\NGCC_n^D)$.

\begin{lemma}\label{firstrow}
Every matrix entry $x_{1i}$ can be expressed as $\frac {P(X,Y)} {\phhi_1}$, where $P$ is a polynomial in matrix entries 
of $Y$, $X_{[2,n]}$  and functions $c_j(X,Y)$, $j=1,\ldots, n$. Alternatively, $x_{1i}$ can be expressed as 
Laurent polynomials in terms of cluster variables in 
$\bar\FFF'_n$.
\end{lemma}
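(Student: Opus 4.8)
The plan is to recover the first row of $X$ by differentiating a single, explicitly known exchange relation, namely the generalized exchange relation \eqref{longidXY} for $\phhi_1$. First I would recall how $\phhi_1$ depends on the first row of $X$. The matrix $\Phi$ has its $X$-blocks equal to $X_{[2,n]}$, which does \emph{not} involve the first row of $X$; the only place the entries $x_{1i}$ enter the construction is through the coefficients $c_i(X,Y)$ appearing on the right-hand side of \eqref{longidXY}, together with $\phhi_1'$ on the left. Since $\det(\lambda Y+\mu X)=\sum c_i(X,Y)\mu^i\lambda^{n-i}$, each $c_i$ is a polynomial in the entries of $X$ and $Y$ that is affine-linear, in fact of degree at most one, in the entries of the first row of $X$ (expanding the determinant along the first row of $\lambda Y+\mu X$ shows the $x_{1j}$ occur to the first power only, multiplied by complementary $(n-1)\times(n-1)$ minors built from rows $2,\dots,n$, i.e. from entries of $X_{[2,n]}$ and $Y$). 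Consequently the right-hand side of \eqref{longidXY}, call it $F$, is a polynomial in $x_{1i}$ ($i=1,\dots,n$), in the entries of $X_{[2,n]}$ and $Y$, and in the $c_j$.

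Next I would extract $x_{1i}$ by applying the derivation $\partial/\partial x_{1i}$ (or, more robustly, by a discrete difference, replacing $x_{1i}$ by $x_{1i}+t$ and reading off the linear term in $t$) to both sides of \eqref{longidXY}. On the left, $\phhi_1$ is independent of the first row of $X$, so $\partial(\phhi_1\phhi_1')/\partial x_{1i}=\phhi_1\,\partial\phhi_1'/\partial x_{1i}$. On the right, $\partial F/\partial x_{1i}=\sum_{j=0}^n\bigl(\partial c_j/\partial x_{1i}\bigr)\bigl((-1)^{n-1}h_{22}\phhi_{n+1}\bigr)^j\phhi_2^{n-j}$, and each $\partial c_j/\partial x_{1i}$ is the appropriate signed $(n-1)\times(n-1)$ minor of $\lambda Y+\mu X$ obtained by deleting the first row and the $i$th column, hence a polynomial in the entries of $Y$ and $X_{[2,n]}$ only. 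Comparing and solving gives
\[
x_{1i}=\frac{P(X,Y)}{\phhi_1},\qquad
P=\sum_{\ell}\Bigl(\text{coeff.}\Bigr)\cdot(\text{entries of }\phhi_1',\ \phhi_2,\ \phhi_{n+1},\ h_{22},\ \partial c_j/\partial x_{1i}),
\]
a polynomial in matrix entries of $Y$, of $X_{[2,n]}$, and in the functions $c_j$, as required. (One recovers $x_{1i}$ rather than merely $\phhi_1 x_{1i}$ because the affine dependence of the $c_j$ on $x_{1i}$ means the relation \eqref{longidXY}, read as an identity of polynomials, pins down $x_{1i}$ uniquely once all the other data are known.)

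For the second, "alternatively" clause: by Proposition \ref{matrixentries} all entries of $X_{[2,n]}$ and $Y$ are cluster variables of $\NGCC_n^D$, hence Laurent polynomials in the variables of \emph{any} fixed extended cluster, in particular in $\bar\FFF_n'$ (one first mutates $\bar\FFF_n'$ back to $\bar\FFF_n$ and then out to the relevant clusters; note $\phhi_1\in\bar\FFF_n$ is itself a Laurent polynomial in $\bar\FFF_n'$ via the exchange relation). The functions $c_j=\tilde c_j$ up to sign are frozen variables, hence trivially in every cluster. Since $\phhi_1'\in\bar\FFF_n'$ and $\phhi_1$ is a Laurent polynomial in $\bar\FFF_n'$, the expression $P(X,Y)/\phhi_1$ is a Laurent polynomial in $\bar\FFF_n'$. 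The main obstacle, and the only genuinely non-formal point, is the first step: verifying that the $c_j$ depend affinely on the first row of $X$ with cofactor coefficients lying in the subring generated by entries of $X_{[2,n]}$ and $Y$, and then checking that the linear system produced by $\partial/\partial x_{1i}$ is actually solvable for $x_{1i}$ (equivalently, that $\partial\phhi_1'/\partial x_{1i}$ and the minors $\partial c_j/\partial x_{1i}$ do not conspire to make the coefficient of $x_{1i}$ vanish) — this is where one uses the explicit form of $\phhi_1'$ from \cite[Section~4]{GSVstaircase}.
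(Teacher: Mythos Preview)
Your argument for the first claim has a genuine gap. Differentiating \eqref{longidXY} with respect to $x_{1i}$ gives
\[
\phhi_1\,\frac{\partial\phhi_1'}{\partial x_{1i}}=\sum_{j=0}^{n}\frac{\partial c_j}{\partial x_{1i}}\bigl((-1)^{n-1}h_{22}\phhi_{n+1}\bigr)^{j}\phhi_2^{\,n-j},
\]
which is an identity expressing $\partial\phhi_1'/\partial x_{1i}$ in terms of data independent of the first row of $X$; it does \emph{not} contain $x_{1i}$ and therefore cannot be ``solved'' for it. A single scalar relation cannot determine the $n$ unknowns $x_{11},\dots,x_{1n}$. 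The paper instead uses all $n$ affine relations
\[
c_j(X,Y)-c_j(\bar X,Y)=\sum_{i=1}^n x_{1i}\,z_{ij}(\bar X,Y),\qquad j=1,\dots,n,
\]
and inverts the $n\times n$ matrix $Z=(z_{ij})$. The nontrivial point---which your proposal sidesteps---is to show that $\det Z$ is a nonzero scalar multiple of $\phhi_1$; this is done via the identification of $\phhi_1$ with $\pm(\det Y)^{n-1}\det[U^{n-1}e_1\ \cdots\ Ue_1\ e_1]$ from \cite[Lemma~3.3]{GSVstaircase} together with a degree count. Moreover, your displayed formula for $P$ involves $\phhi_1'$, which is a polynomial in \emph{all} entries of $X$, so $P$ as you wrote it is not a polynomial in $Y$, $X_{[2,n]}$, and the $c_j$ alone.

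The second claim also does not follow as you suggest. Knowing that $P$ and $\phhi_1$ are each Laurent polynomials in the variables of $\bar\FFF_n'$ does not imply that the quotient $P/\phhi_1$ is one; a ratio of Laurent polynomials is generally not Laurent. The paper's argument here is considerably more delicate: writing $Z=Z^0+\phhi_1 Z^1$ and $Z^{-1}=\phhi_1^{-1}W^0+W^1$ with $W^0$ of rank one, one identifies the kernel of $Z^0$ explicitly with the span of the monomial vector $u=\bigl(((-1)^{n-1}h_{22}\phhi_{n+1})^{j}\phhi_2^{\,n-j}\bigr)_{j=1}^n$, and then uses \eqref{longidXY} applied to both $X$ and $\bar X$ to rewrite $\phhi_1^{-1}\sum_j(c_j-\bar c_j)u_j$ as $\phhi_1'-\bar\phhi_1'$, thereby eliminating the $\phhi_1$ denominator. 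This cancellation is the heart of the Laurent statement and is absent from your outline.
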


\begin{proof} Denote  by $\bar X$ the matrix obtained from $X$ by setting all entries of the first row to $0$. 
For $1\le j \le n$, $c_j(X,Y)$ is a linear function in matrix entries $x_{1i}$, and so we can write
\begin{equation}
\label{TheSystem}
c_j(X,Y) = c_j(\bar X, Y)  + \sum_{i=1}^n x_{1i} z_{ij}(\bar X, Y), 
\end{equation}
where 
$z_{ij}(\bar X, Y)$ are polynomials in the entries of $X_{[2,n]}$ and $Y$. Thus we obtain a linear system for 
the entries $x_{1i}$  
and $Z= \left (z_{ij}(\bar X, Y)\right )_{i,j=1}^n$ is the matrix of this system. Clearly, solutions of the 
system are polynomials in the entries of $X_{[2,n]}$ and $Y$ divided by $\det Z$. 

Note that $z_{ij}(\bar X,Y)$ is a polynomial of 
degree $n-j-1$ in the entries of $X$ and of degree $j$ in the entries of $Y$ and so, both $\det Z$ and $\phhi_1(X,Y)$ are polynomials of total degree $\frac {n(n-1)}{2}$ in terms of both $X$ and $Y$.
We will now show that, up to a scalar multiple, $\det Z$ coincides with $\phhi_1(X,Y)$. To this end, we will demonstrate that 
$\det Z=0$ implies $\fy_1=0$.   Since $\phhi_1$ is irreducible (\cite[Lemma 4.2]{GSVstaircase}), this means that  $\det Z$ and $\phhi_1(X,Y)$ differ by a constant multiple.

To prove the implication $\det Z=0 \Rightarrow\fy_1=0$ , suppose that $\det Z$ vanishes. Then the system \eqref{TheSystem} is still solvable, but its solution is not unique. Consequently, there exists a non-zero row vector 
$v^T$ such that $\det\left ( e_1 v^T + X + \lambda Y \right ) = \det\left (X + \lambda Y \right )$. The determinant on the left
 is evaluated via the Schur complement:
\begin{equation}
\label{Schur}
\det\begin{bmatrix} X+\lambda Y & e_1\\
                       v^T & -1
	\end{bmatrix}
	=\det (X+\lambda Y) \left(1+v^T(X+\lambda Y)^{-1}e_1\right),										
\end{equation}
which means that $v^T(X + \lambda Y )^{-1}e_1 = 0$ for any $\lambda$.  Equivalently, $v^T Y^{-1} U^j e_1=0$ 
for $j=0, 1, 2,\ldots$, where $U = X Y^{-1}$,  and hence $\det \left [U^{n-1}e_1 \ U^{n-2}e_1 \ldots U e_1 e_1 \right ] =0$. However, by \cite[Lemma 3.3]{GSVstaircase}, 
\[
\phhi_1(X,Y) = \pm \left (\det Y\right )^{n-1} \det \left [U^{n-1}e_1 U^{n-2}e_1 \cdots U e_1\ e_1 \right ]
\]
and so, whenever $\det Z$ vanishes so does $\phhi_1$. This proves the first claim of the Lemma.

To establish the second claim, let us consider the dependence of the coefficient matrix $Z$ and functions $\bar c_j=
c_j(\bar X,Y)$ on the initial cluster variables. In view of the proof of Proposition \ref{matrixentries}, all $z_{ij}$ 
and $\bar c_j$ are Laurent polynomials in variables from $\widehat\FFF_n$ and, moreover, are polynomials in $\phhi_1$. 
Write $Z = Z^0 + \phhi_1 Z^1$, where $Z^0$ does not depend on $\phhi_1$. Since $\det Z$ is a scalar multiple of 
$\phhi_1$, the entries of $Z^{-1}$ are Laurent polynomials in cluster variables from $\widehat\FFF_n$. 
Furthermore, it is easy to see that $\det(Z^0 + \phhi_1 Z^1)$ is proportional to $\phhi_1^{n-\rank(Z^0)}$, 
and hence the rank of  $Z^0$ is equal to $n-1$. Further, $Z^{-1} = \phhi_1^{-1} W^0  + W^1$, 
where $W^1$ is polynomial in $\phhi_1$, and  $W^0$ does not depend on $\phhi_1$ and satisfies relations
$Z^0 W^0 = W^0 Z^0=0$. It follows that $W^0$ is of rank $1$, that is,  
$W^0 = w_1 w_2^T$, where $w_1, w_2$ are non-zero column vectors such that $w_1$ spans the kernel of $Z^0$. 
Therefore,  the first row of $X$ can be expressed as
\[
X_{[1]} = \phhi_1^{-1} \left(\sum_{j=1}^n (c_j - \bar c_j) w_{1j}\right) w_2^T + \L, 
\]
where $\L$ is a vector of Laurent polynomials in terms of cluster variables in $\bar\FFF'_n$.

Let us show that the vector
\[
u=\left ( \left ( (-1)^{n-1} h_{22}\phhi_{n+1}\right )^j \phhi_2 ^{n-j}\right )_{j=1}^{n}
\]
spans the kernel of $Z^0$. Note that \eqref{longidXY} implies 
\begin{align*}
\sum_{j=1}^n c_j u_{j} + \det Y \phhi_2^n&= \phhi_1 \phhi'_1,\\
\sum_{j=1}^n \bar c_j u_{j} + \det Y \phhi_2^n&= \phhi_1 \bar\phhi'_1,
\end{align*}
where $\bar\phhi'_1$ is a polynomial in the entries of $X_{[2,n]}$ and $Y_{[2,n]}$, and thus can be written as a Laurent polynomial in variables from $\widehat\FFF_n$ which is  polynomial in $\phhi_1$, and hence as a Laurent polynomial in variables from $\bar\FFF'_n$. Consequently,
$\sum_{j=1}^n( c_j - \bar c_j) u_{j} = \phhi_1 ( \phhi'_1 - \bar\phhi'_1 )$, 
so that
\[
\phhi_1 ( \phhi'_1 - \bar\phhi'_1 )=X_{[1]}Zu=X_{[1]}Z^0u+\phhi_1X_{[1]}Z^1u,
\] 
and hence $\phhi_1=0$ implies $X_{[1]}Z^0u=0$. Note that $Z^0$ and $u$ does not depend on $\phhi_1$ and on $X_{[1]}$, 
which means that $u$ spans the kernel of $Z^0$, as claimed. Therefore, we can choose $w_1=u$, and since the entries of 
$u$ are monomials in terms of variables from $\bar\FFF_n\setminus\{\phhi_1\}$, entries of $w_2$ are Laurent monomials
in terms of the same variables. 
We conclude that 
\[
X_{[1]} = \phhi_1^{-1} \left (\sum_{j=1}^n(c_j - \bar c_j) u_j\right ) w_2^T + \L =  
\phhi'_1 w_2^T + ( \L - \bar\phhi'_1 w_2^T ), 
\]
which proves the claim.
\end{proof}

It follows immediately from Lemma~\ref{firstrow} that each $x_{1i}$ can be written as a Laurent polynomial in terms of the
cluster variables in the initial cluster and in any of its neighbours, since by Proposition \ref{matrixentries}, all entries 
of $Y$ and $X_{[2,n]}$ are cluster variables, and hence Laurent polynomials in any cluster.

\section{Two generalized cluster structures on $D(GL_4)$}\label{twogcs}

 In \cite{GSVDouble} we described a generalized cluster structure $\GCC_n^D$ on $D(GL_n)$. 
It is easy to see that the generalized cluster structures $\NGCC_n^D$ described in this paper and $\GCC_n^D(Y^T,X^T)$
have the same set of frozen variables. Moreover, for $n=2$ the initial seeds $\bar\Sigma_2(X,Y)$ and $\Sigma_2(Y^T,X^T)$
coincide. For $n=3$, a straightforward computation shows that the sequence of mutations at vertices $(4,3)$, $(3,2)$, $(2,1)$ takes $\bar\Sigma_3(X,Y)$ to $\Sigma_3(Y^T,X^T)$. In this section we prove that for $n=4$ no such sequence exists, and hence generalized cluster structures $\NGCC_4^D$ and $\GCC_4^D(Y^T,X^T)$ are distinct. We conjecture that this holds
true for any $n>4$ as well, see Remark~\ref{alln} below.

\begin{proposition}\label{notequiv}
The generalized cluster structures $\NGCC_4^D=\NGCC_4^D(X,Y)$ and $\GCC_4^D=\GCC_4^D(Y^T,X^T)$ are distinct.
\end{proposition}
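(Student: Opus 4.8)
The plan is to show that $\NGCC_4^D$ and $\GCC_4^D(Y^T,X^T)$ cannot be mutationally equivalent by producing a seed-invariant that distinguishes them. Since both structures live in the same field $\C(D(GL_4))$, have the same frozen variables, and (by Theorem \ref{newstructure} and its analogue in \cite{GSVDouble}) are both complete, one cannot distinguish them at the level of the upper cluster algebra or the ambient Poisson structure. Instead I would look for a combinatorial/structural invariant of the whole mutation class. The most natural candidate is the behavior forced by the unique special vertex: both quivers carry exactly one special vertex with multiplicity $n=4$, and the structure of the generalized exchange polynomial attached to it, together with the way that vertex sits inside the quiver (which mutable and frozen vertices it is connected to, and with what multiplicities), is constrained under mutation in a controllable way.

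The key steps, in order, would be: (1) Recall from Section \ref{prelim} that under mutation the special vertex and its string are essentially rigid — the multiplicity $d_k$ never changes, and the string only reverses — so any seed in either mutation class has a unique special vertex carrying the string $(1,\tilde c_1,\tilde c_2,\tilde c_3,1)$ (up to reversal), hence the generalized exchange polynomial at that vertex is always $\sum_{r=0}^4 \tilde c_r\, u_{>}^r v_{>}^{[r]} u_{<}^{4-r} v_{<}^{[4-r]}$ for appropriate monomials. (2) Identify the cluster variable $z$ sitting at the special vertex in the initial seed of each structure (for $\NGCC_4^D$ this is the variable at the hexagon vertex $(2,1)$; trace through Section \ref{init} to see which function of $X,Y$ it is, and do the analogous computation in $\GCC_4^D(Y^T,X^T)$ from \cite{GSVDouble}), and observe that the product $z\cdot z'$ over the special exchange, when the string is stripped, reduces to a sum of $n+1$ Laurent monomials in the other cluster variables — an invariant amount of combinatorial data. (3) Show that if the two structures were mutationally equivalent, there would be a cluster variable common to both initial configurations, or more precisely a permutation of frozen variables compatible with an isomorphism of the two labelled seed patterns; then derive a contradiction by comparing a concrete numerical invariant. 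The cleanest such invariant is the exchange relation for the distinguished mutable variable $\phhi_1$ recorded in \eqref{longidXY}: in $\NGCC_4^D$ it reads $\phhi_1\phhi_1' = \sum_{i=0}^4 c_i (-h_{22}\phhi_5)^i \phhi_2^{4-i}$, a polynomial whose Newton polytope, set of monomials, and the specific frozen/mutable variables appearing are explicitly computable; the corresponding relation for $\GCC_4^D(Y^T,X^T)$ is the transpose-twisted version, and one checks these two cannot be matched under any relabeling of the (common) frozen variables.

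Concretely, I would compute in both structures the multiset of exponent vectors appearing in the exchange polynomial at the special vertex, expressed in terms of frozen variables only (i.e. the Casimirs $\hat p_{1r}$ together with the frozen part of $u_>,u_<$), since this data is literally invariant under mutation away from the special vertex and is only reversed by mutation at it. If these multisets differ between $\NGCC_4^D$ and $\GCC_4^D(Y^T,X^T)$ — and the transpose swap $X\leftrightarrow Y^T$ should visibly break the symmetry because the $g$-region and $h$-region play asymmetric roles relative to the $\phhi$-region and the special vertex — we are done. As a fallback invariant I would use the total number of cluster variables that are \emph{not} Laurent monomials times a single cluster variable, or simply exhibit an explicit regular function on $D(GL_4)$ (say a particular matrix entry of $X$) that is a cluster variable in one structure but provably not in the other, using the completeness description from Section \ref{complet} on one side and an obstruction (e.g. wrong degree, or failure of the Laurent-with-prescribed-denominator form) on the other.

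The main obstacle I anticipate is step (3): ruling out \emph{all} possible mutation sequences rather than just checking the initial seeds. Mutational equivalence is a statement about the entire (infinite) exchange graph, so a genuine proof needs an invariant that is manifestly preserved by every mutation, not merely a difference visible in the chosen initial seeds. Pinning down such an invariant — most plausibly something anchored at the rigid special vertex and its Casimir string, or a global count like the number of frozen variables that are cut out by a single exchange relation — and verifying it is truly mutation-invariant (including through mutations at the special vertex, where the string reverses) is the delicate part; the rest is bookkeeping with the explicit quivers $\bar Q_4$ and the quiver from \cite{GSVDouble}.
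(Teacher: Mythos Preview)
Your proposal has a genuine gap: the invariants you hope to use either fail to be mutation-invariant or fail to distinguish the two structures.

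The string attached to the special vertex is $(1,\tilde c_1,\tilde c_2,\tilde c_3,1)$ in both cases, since $\det(\lambda Y+\mu X)=\det(\lambda Y^T+\mu X^T)$; so the Casimirs $\hat p_{1r}$ coincide and carry no distinguishing information. More seriously, the ``frozen part'' of the exchange polynomial at the special vertex is \emph{not} invariant under mutation away from that vertex: mutating a mutable neighbour $j$ of the special vertex $k$ can create or destroy edges between $k$ and frozen vertices via paths through $j$, so the monomials $v_{k;>}^{[r]},v_{k;<}^{[r]}$ genuinely change. Your fallback of exhibiting a regular function that is a cluster variable in one structure but not the other requires proving a negative statement about an infinite exchange graph, and you offer no mechanism for that. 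In short, you have correctly located the difficulty (ruling out all mutation sequences) but none of your proposed invariants survives it.

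The paper's argument is entirely different and avoids searching for a global invariant. It first performs an explicit sequence of eight mutations on $\bar\Sigma_4$ so that the resulting seed shares a large set of cluster variables with the initial seed of $\GCC_4^D(Y^T,X^T)$: all variables except those at three mutable vertices coincide. It then invokes a result of Cao--Li \cite{CL2} stating that if two mutation-equivalent seeds share a set of cluster variables, some connecting mutation sequence avoids those variables. This reduces the question to whether two specific $4$-vertex subquivers $Q(0,1,0)$ and $Q(0,-1,0)$ (a triangle with one generalized vertex of multiplicity $4$, plus one extra vertex) can be connected by mutations at the three triangle vertices only. Finally a direct case analysis, organized by the ``charge'' $C(Q)=|\alpha|+|\beta|+|\gamma|$, shows this is impossible. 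The reduction step via \cite{CL2} is the key idea you are missing; without it there is no finite computation to perform.
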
  

\begin{proof}
We start with the seed $\bar\Sigma_4$ for $\NGCC_4^D$ and perform a sequence of mutations at vertices 
$(5,4)$, $(4,3)$, $(3,2)$, $(6,4)$, $(5,3)$, $(4,2)$, $(5,4)$, 
$(4,3)$. In this way, we get functions $\phhi_4'$, $\phhi_3'$, $\phhi_2'$, $\phhi_8'$, 
$\phhi_7'$, $\phhi_6'$, $\phhi_4''$, $\phhi_3''$, 
respectively. A straightforward computation shows that
\begin{gather*}
\phhi_8'=\det\begin{bmatrix} y_{43} & y_{44}\\
                      x_{43} & x_{44}
              \end{bmatrix}, \quad
\phhi_7'=\det\begin{bmatrix} y_{42} & y_{43} & y_{44}\\
                             x_{32} & x_{33} & x_{34}\\
                             x_{42} & x_{43} & x_{44}
              \end{bmatrix}, \quad		
\phhi_4''=\det\begin{bmatrix} y_{32} & y_{33} & y_{34}\\
                              y_{42} & y_{43} & y_{44}\\
                              x_{42} & x_{43} & x_{44}
              \end{bmatrix},	\\						
\phhi_6'=\det\begin{bmatrix} y_{31} & y_{32} & y_{33} & y_{34}\\
                             y_{41} & y_{42} & y_{43} & y_{44}\\
                             x_{31} & x_{32} & x_{33} & x_{34}\\
                             x_{41} & x_{42} & x_{43} & x_{44}
              \end{bmatrix}, \quad
\phhi_3''=\det\begin{bmatrix} y_{21} & y_{22} & y_{23} & y_{24}\\
                              y_{31} & y_{32} & y_{33} & y_{34}\\
                              y_{41} & y_{42} & y_{43} & y_{44}\\  
                              x_{41} & x_{42} & x_{43} & x_{44}
              \end{bmatrix}.							
\end{gather*}
The corresponding quiver is shown in Fig.~\ref{modiQ4}. 
The quiver for the initial seed for $\GCC_4^D$ constructed in \cite{GSVDouble}
is shown in Fig.~\ref{oldQ4}.  Recall that we are interested in the seed $\Sigma_4(Y^T,X^T)$, and hence in this case $g_{ij}(Y^T,X^T)=\det (Y^T)_{[i,n]}^{[j,j+n-i]}=h_{ji}(X,Y)$ and $h_{ij}(Y^T,X^T)=g_{ji}(X,Y)$. Further, functions 
$f_{ij}$ are defined via 
\[
f_{ij}(Y^T,X^T)=\det \left[(Y^T)^{[n-i+1,n]}\ (X^T)^{[n-j+1,n]}\right]_{[n-i-j+1,n]}, 
\]
and hence $f_{11}(Y^T,X^T)=\fy'_8(X,Y)$, $f_{21}(Y^T,X^T)=\fy''_4(X,Y)$, and $f_{12}(Y^T,X^T)=\fy'_7(X,Y)$. Finally, as explained in \cite[Remark 3.1]{GSVDouble},
functions $\fy_{ij}$ with $i+j=4$ are defined via the same expression as $f_{ij}$, and hence $\fy_{31}(Y^T,X^T)=\fy''_3(X,Y)$, 
$\fy_{22}(Y^T,X^T)=\fy'_6(X,Y)$, and $\fy_{13}(Y^T,X^T)=\fy_9(X,Y)$. We thus see
that the restrictions of both quivers to the three lower rows coincide,
as well as the functions attached to the corresponding vertices. Moreover, the functions attached to the fourth
row from below in both quivers coincide as well, as well as the arrows between the third and the fourth row.
We will prove that the corresponding two seeds
are not mutationally equivalent.

\begin{figure}[ht]
\begin{center}
\includegraphics[width=11cm]{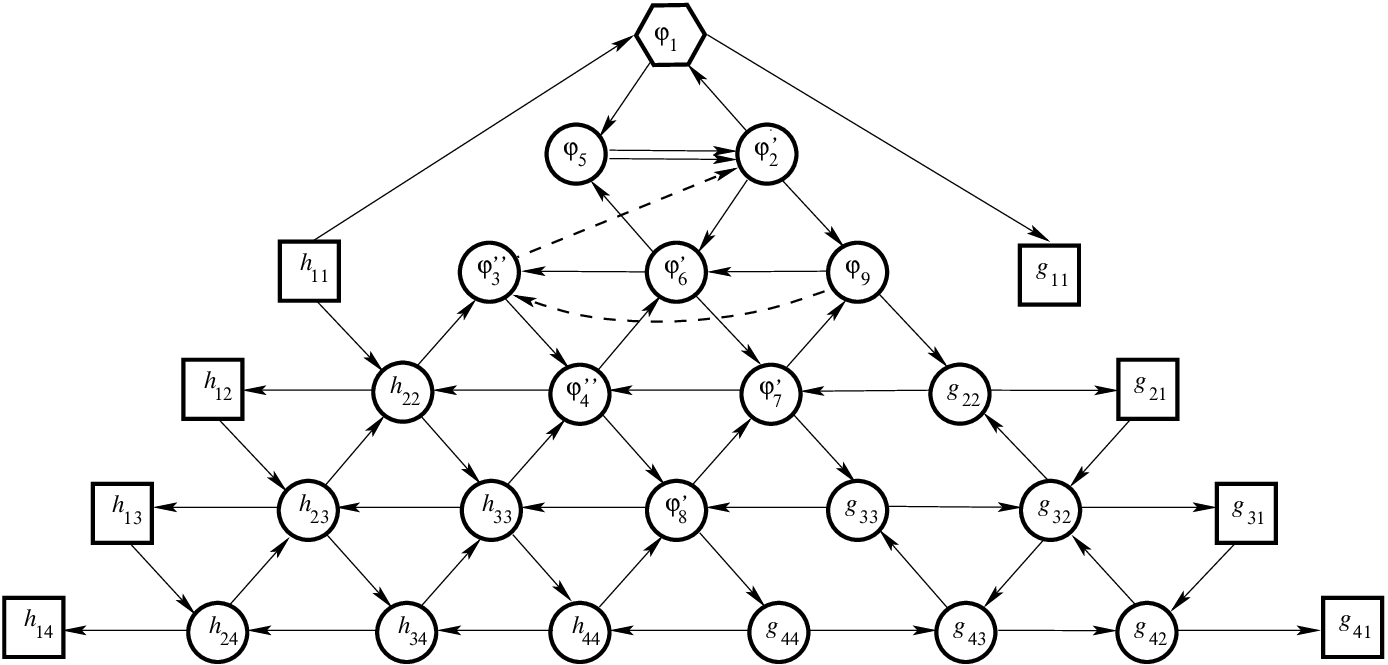}
\end{center}
\caption{Modified $\bar Q_4$}
\label{modiQ4}
\end{figure}

\begin{figure}[ht]
\begin{center}
\includegraphics[width=11cm]{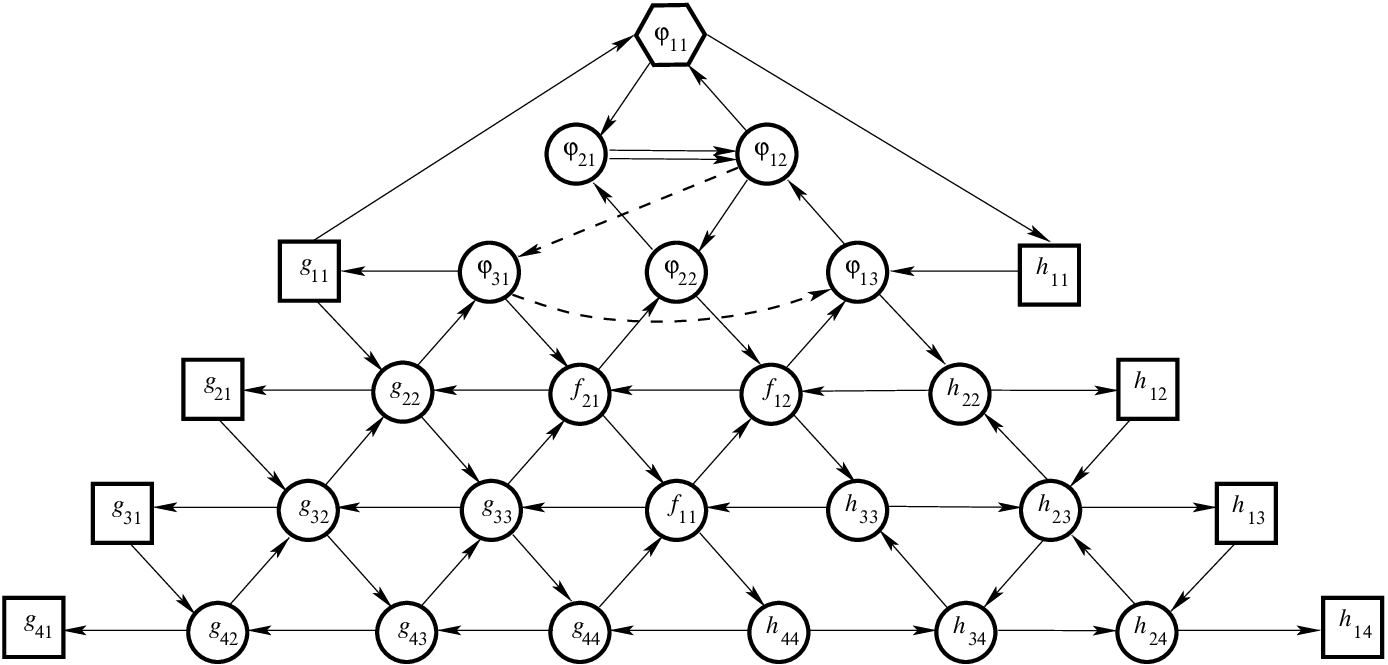}
\end{center}
\caption{Quiver $ Q_4$}
\label{oldQ4}
\end{figure}

 By \cite[Th.~3.6]{CL2}, if two seeds are mutationally equivalent and share a set of common cluster variables, there exists a sequence of mutations that connects these seeds and does not involve the common cluster variables. 

\begin{remark}
In fact, the definition of
a generalized cluster structure in \cite{CL2} and in the preceding paper \cite{CL1} is more restrictive, since it imposes
a reciprocity condition on exchange coefficients, following \cite{Na}. However, this condition is only used in the proof
of Lemma 4.20 in \cite{CL1}, which in turn is based on Proposition 3.3 in \cite{Na}. This proposition claims that every cluster
variable can be written as a Laurent polynomial in cluster variables of the initial cluster and an ordinary polynomial in frozen
variables. It is an analog of the corresponding statement for ordinary cluster structures and its proof extends to the case of generalized cluster structures as defined in Section \ref{prelim} without any changes. 
\end{remark}

Consequently, if the above two seeds are
equivalent, there should exist a sequence of mutations that involves only three vertices comprising the uppermost triangle.
We will concentrate on two four-vertex subquivers that are formed by the uppermost triangle and the vertex corresponding
to  $\fy_{13}(Y^T,X^T)=\fy_9(X,Y)$. These two subquivers are shown in Fig.~\ref{two4quivers}. We claim that there is no sequence of mutations at
the vertices $1$, $2$ and $3$ that takes one subquiver to the other one. Note that  although the mutations at vertex $4$ are not allowed, it is not frozen.

\begin{figure}[ht]
\begin{center}
\includegraphics[width=7cm]{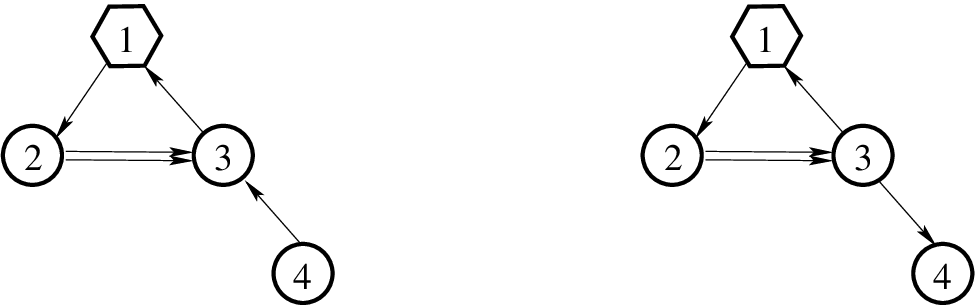}
\end{center}
\caption{Subquivers of $Q_4$ and $\bar Q_4$}
\label{two4quivers}
\end{figure}

To prove our claim we consider the evolution of a more general quiver $Q(\alpha,\beta,\gamma)$ shown in Fig.~\ref{samplequiv} under mutations at the vertices $1$, $2$ and $3$. Here multiplicities $\alpha$, $\beta$ and $\gamma$ can take any integer values except for $\alpha=\beta=\gamma=0$. A negative value 
means that the direction of the corresponding arrow is reversed. Clearly, two quivers shown in  Fig.~\ref{two4quivers} are
$Q(0,1,0)$ and $Q(0,-1,0)$. To keep track of the mutations it will be convenient to renumber the vertices so that the generalized vertex is always vertex $1$, and the direction of arrows in the triangle is
$1\to2\to3\to1$. Note that any mutation of $Q(\alpha,\beta,\gamma)$ transforms it into  $Q(\alpha',\beta',\gamma')$
for certain values of $\alpha'$, $\beta'$ and $\gamma'$. 

\begin{figure}[ht]
\begin{center}
\includegraphics[width=2cm]{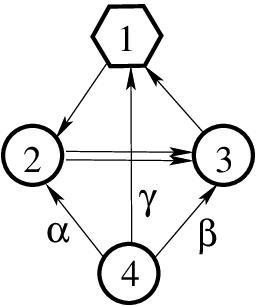}
\end{center}
\caption{Quiver $Q(\alpha,\beta,\gamma)$ }
\label{samplequiv}
\end{figure}

Define the {\it charge\/} of $Q=Q(\alpha,\beta,\gamma)$ as $C(Q)=|\alpha|+|\beta|+|\gamma|$. The nodes of the 3-regular tree
$\T$ that describes all possible mutations of $Q(\alpha,\beta,\gamma)$ can be classified into 10 possible types with respect to
the charge. We encode these types by a triple $[i,j,k]$, where $i$ stands for the number of mutations that increase the
charge, $j$ stands for the number of mutations that preserve the charge, and $k$ stands for the number of mutations that decrease the charge, so that $i+j+k=3$. Note that both quivers $Q(0,1,0)$ and $Q(0,-1,0)$ have charge $1$. Consequently, if they are
mutation equivalent, then either all quivers along the simple  path (the one that never returns to the same vertex) in $\T$ that connects $Q(0,1,0)$ and $Q(0,-1,0)$ have charge $1$, or
this path contains two quivers $Q_1$, $Q_2$ that differ by one mutation, such that $C(Q_1)<C(Q_2)$ and $C(Q_2)\ge C(Q)$ for any other quiver $Q$ along the path.

Consider the first possibility. A straightforward computation shows that mutations at vertices $1$ and $2$ 
preserve the charge and take $Q(0,1,0)$ to $Q(1,0,0)$, while mutation at vertex $3$ increases the charge. Further, 
mutations of $Q(1,0,0)$ at vertices $1$ and $3$ preserve the charge and take $Q(1,0,0)$ to $Q(0,1,0)$, while mutation at  vertex $2$ increases the charge. Therefore, $Q(0,-1,0)$ can not be reached from $Q(0,1,0)$ along a path with the constant charge $1$.

Consider the second possibility. 
 Let $[i,j,k]$ be the type of $Q_2$, then $k\ge 1$ and $j+k\ge2$, so we remain with the following possibilities: 
$[1,1,1]$, $[1,0,2]$, $[0,2,1]$, $[0,1,2]$, $[0,0,3]$. We are interested in finding conditions on $\alpha$, $\beta$ and 
$\gamma$ that would guarantee that the type of $Q_2$ is indeed one of the types listed above. One can distinguish 
eight possible cases according to the signs of $\alpha$, $\beta$, and $\gamma$. Let us consider in detail one of the nontrivial cases.

\begin{table}[htp]
\caption{Types of $Q$}
\begin{center}
\begin{tabular}{|c||c|c|}
\hline
 & {\it Case 1\/}: $\alpha\ge 0,\beta\ge 0, \gamma\ge 0$ &  {\it Case 2\/}: $\alpha<0,\beta\ge 0,\gamma\ge 0$ \\
 \hline
$C(\mu_1(Q))$  & $C(Q)+4\gamma\ge C(Q)$ &  $C(Q)+\alpha+|\alpha+4\gamma|$ \\
 \hline
$C(\mu_2(Q))$  & $C(Q)+2\alpha\ge C(Q)$ &   $C(Q)-\gamma+|\alpha+\gamma|$ \\
\hline
$C(\mu_3(Q))$  & $C(Q)+\beta\ge C(Q)$ &  $C(Q)+\beta$ \\
\hline
type of $Q$ & $[i,j,0]$ & 
$\begin{cases}
[1,1,1], &\text{ if } \alpha+2\gamma>0, \beta=0 \\ &\text{ or if } \alpha+2\gamma<0, \\ & \gamma\ne 0, \beta=0 \\ [i,j,0] \text{ or }  & \\
[2,0,1] & \text{ otherwise.}  
\end{cases}$ \\
\hline
\hline
\hline
 &  {\it Case 3\/}: $\alpha\ge 0,\beta < 0, \gamma\ge 0$ &  {\it Case 4\/}: $\alpha\ge 0,\beta\ge 0, \gamma< 0$\\
 \hline
$C(\mu_1(Q))$   & $C(Q)+4\gamma\ge C(Q)$ & $C(Q)-\beta+|\beta+4\gamma|$ \\
 \hline
$C(\mu_2(Q))$   & $C(Q)+\beta+|\beta+2\gamma|$ & $C(Q)+2\alpha\ge C(Q)$ \\
\hline
$C(\mu_3(Q))$  & $C(Q)-\alpha+|\alpha+2\beta|$   & $C(Q)+\gamma+|\gamma+\beta|$\\
\hline
type of $Q$  & 
$\begin{cases}
[1,1,1], &\text{ if } \alpha+\beta>0, \gamma=0 \\ &\text{ or if } \alpha+\beta<0,\\ & \alpha\ne 0, \gamma=0 \\ [i,j,0] \text{ or }  & \\
[2,0,1] & \text{ otherwise.} 
 \end{cases}$ & $\begin{cases}
[1,1,1], &\text{ if } \beta+2\gamma>0, \alpha=0 \\ &\text{ or if } \beta+2\gamma<0, \\ & \gamma\ne 0, \alpha=0 \\ [i,j,0] \text{ or }  & \\
[2,0,1] & \text{ otherwise.}  
\end{cases}$  \\
\hline
\hline
\hline
 &  {\it Case 5\/}: $\alpha<0,\beta< 0, \gamma\ge 0$ & {\it Case 6\/}: $\alpha< 0,\beta \ge 0, \gamma< 0$ \\
 \hline
$C(\mu_1(Q))$  &  $C(Q)+\alpha+|\alpha+4\gamma|$ &  $C(Q)-\beta+|\beta+4\gamma|$ \\
 \hline
$C(\mu_2(Q))$  &   $C(Q)-\gamma+|\alpha+\gamma|$ &  $C(Q)-\alpha>C(Q)$ \\
\hline
$C(\mu_3(Q))$   &  $C(Q)-2\beta>C(Q)$ &  $C(Q)+\gamma+|\gamma+\beta|$ \\
\hline
type of $Q$ & 
$[i,j,0]\text{ or } [2,0,1]$
&
$[i,j,0]\text{ or } [2,0,1]$\\
\hline
\hline
\hline
 &  {\it Case 7\/}: $\alpha\ge0,\beta< 0, \gamma< 0$ & {\it Case 8\/}: $\alpha< 0,\beta < 0, \gamma< 0$ \\
 \hline
$C(\mu_1(Q))$  &  $C(Q)-4\gamma>C(Q)$ & $C(Q)-4\gamma>C(Q)$ \\
 \hline
$C(\mu_2(Q))$  &   $C(Q)+\beta+|\beta+2\alpha|$ & $C(Q)-\alpha>C(Q)$ \\
\hline
$C(\mu_3(Q))$   &   $C(Q)-\alpha+|\alpha+2\beta|$ & $C(Q)-2\beta>C(Q)$ \\
\hline
type of $Q$ & 
$[i,j,0]\text{ or } [2,0,1]$
&
[3,0,0] \\
\hline

\end{tabular}
\end{center}
\label{table:typesQ}
\end{table}%

Let $\alpha<0$, $\beta\ge0$, $\gamma\ge0$. Mutation at vertex $1$ takes $Q=Q(\alpha,\beta,\gamma)$ to
$Q'=Q(\beta,\alpha+4\gamma, -\gamma)$, so that $C(Q')=C(Q)+\alpha+|\alpha+4\gamma|$, and hence $C(Q')<C(Q)$
if $\alpha+2\gamma<0$ and $\gamma\ne0$. Mutation at vertex $2$ takes 
$Q$ to $Q'=Q(\beta,-\alpha, \gamma+\alpha)$, so that $C(Q')=C(Q)-\gamma+|\alpha+\gamma|$, and hence $C(Q')<C(Q)$
if $\alpha+2\gamma>0$. Mutation at vertex $3$ takes 
$Q$ to $Q'=Q(-\beta,\alpha, \gamma+\beta)$, so that $C(Q')=C(Q)+\beta\ge C(Q)$. Consequently, the type of
$Q(\alpha,\beta,\gamma)$ is $[1,1,1]$ if $\alpha+2\gamma>0$ and $\beta=0$, or if $\alpha+2\gamma<0$, $\gamma\ne0$ and 
$\beta=0$. For the other values of parameters, the type of
$Q(\alpha,\beta,\gamma)$ is either $[i,j,0]$  with $i+j=3$ or $[2,0,1]$.

Results of similar considerations in all the remaining  cases are summarized in Table~\ref{table:typesQ} 
that contains,  for each case, the values of the charge after the three possible mutations and possible types of 
$Q$ depending on the values of $\alpha$, $\beta$, and $\gamma$.

 It follows from the results presented in the table that the only possible candidates for the quiver $Q_2$ 
are quivers of type $[1,1,1]$ in Cases 2, 3 and 4. 
In all three cases the next node in the path should have the same charge.
If $Q_2$ is as in Case 2 then the next node is obtained by mutation 
at vertex $3$, and the resulting quiver is $Q'=Q(0,\alpha,\gamma)$ with $\alpha<0$ and $\gamma>0$. This situation is covered
by Case 3, and the other two mutations of $Q'$ yield $C(Q')+4\gamma>C(Q')$ and $C(Q')+|2\alpha|>C(Q')$. Consequently,
the maximality condition for the charge of $Q_2$ fails.

 The remaining two cases are analyzed in a similar way, with Case 3 leading to Case 2 and Case 4 leading to Case 4.
Therefore,  $Q(0,-1,0)$ can not be reached from $Q(0,1,0)$ along a path with a varying charge, which completes the proof.
\end{proof}

\begin{remark}  
In a recent preprint \cite{Zhou}, a technique of scattering diagrams is used to construct a log Calabi--Yau variety with two nonequivalent cluster structures both associated with the Markov quiver. This variety is obtained by a certain 
augmentation of a cluster A-variety with principal coefficients in the sense of \cite{GHKK}. In contrast to this, 
the example we presented above gives two explicitly defined nonequivalent generalized cluster structures with the same set of frozen variables in an affine variety obtained by deleting a divisor with a normal crossing from an affine space.
\end{remark}

\begin{remark}\label{alln}
For $n>4$ one can build a sequence of mutations that takes the seed $\bar\Sigma_n$ to a seed $\Sigma=(\x,Q,\P)$ with the following properties. Let $\Sigma_n(Y^T,X^T)=(\x_n,Q_n,\P_n)$, and assume that $Q_n$ and $Q$ are arranged in $2n-2$ rows, 
as in Figs.~\ref{modiQ4} and~\ref{oldQ4},
then the restrictions of $Q$ and $Q_n$ to $n-1$ lower rows coincide, as well as the functions attached to the corresponding vertices. 
So do the functions attached to the $n$th row from below in both quivers and
the arrows between the $n$th and the $(n-1)$th row. The restrictions of
$Q$ and $Q_n$ to the remaining $n-2$ upper rows also coincide, however,  the functions
attached to the corresponding vertices differ. Finally, $\phhi_{n-2}\to\phhi_{N-n+1}$ is an edge in $Q$, 
$\phhi_{1,n-1}\to\phhi_{1,n-2}$ is
an edge in $Q_n$, $\phhi_{N-n+1}(X,Y)=\phhi_{1,n-1}(Y^T,X^T)$, vertices $\phhi_{n-2}$ and $\phhi_{1,n-2}$ correspond each other in the upper parts of $Q$ and $Q_n$, respectively, and there are no edges between $\phhi_{N-n+1}$ and the upper part of $Q$ ($\phhi_{1,n-1}$ and the upper part of $Q_n$, respectively). We believe that similarly to the case $n=4$, one can prove that it is impossible to invert the arrow in question via a sequence of mutations at the vertices of the upper part, which would imply that the generalized cluster structures $\NGCC_n^D(X,Y)$ and $\GCC_n^D(Y^T,X^T)$ are distinct.
\end{remark}

The example presented above describes two different generalized cluster structures  
$\GCC_1=\NGCC_4^D$, $\GCC_2=\GCC_4^D$
such that 
\begin{itemize}
\item the corresponding upper cluster algebras coincide with the ring $\mathbb{C}[D(GL_4)]$ of regular functions on 
the Drinfeld double of $GL_4$;
\item both generalized cluster algebras are compatible with the standard Poisson--Lie bracket on $D(GL_4)$
and have the same collection of frozen cluster variables.  
\end{itemize}

We believe that both generalized cluster structures $\GCC_1$ and $\GCC_2$ can be related to  the same ordinary cluster 
structure $\CC$ using a conjectural construction outlined below  for the case of general $n$.

The cluster structure $\CC$ is associated with the moduli space ${\mathcal A}_{G,S}$ introduced by Fock and Goncharov in the study of $G$-local systems on a marked Riemann surface $S$, see \cite{FG}.  In our example $G=GL_n$, $S$ is  
the punctured disk with $4$ marked points on the boundary.
The variety ${\mathcal A}_{GL_n,S}$ is homeomorphic to the configuration space of triples $({\bf F}, M,\Phi)$ modulo 
the $GL_n$-action where ${\bf F}$ denotes a quadruple of decorated flags at the marked points, $M$ is the 
$GL_n$-monodromy about the puncture, $\Phi$ is a flag at the puncture which is  invariant under the monodromy.
Note that the invariant flag at the puncture is not uniquely defined by the monodromy: there are $n!$ choices 
corresponding to different orderings of monodromy eigenvalues.
The Weyl group $W=S_n$ acts on ${\mathcal A}_{GL_n,S}$ by reordering eigenvalues, which results in a different choice 
of the invariant flag.

The parametrization of ${\mathcal A}_{GL_n,S}$ introduced in \cite{FG} endows it with a cluster structure which, 
in turn, leads to a compatible Poisson bracket. 
This Poisson structure has corank $2n$ with $n$ Casimirs given by the coefficients of the characteristic polynomial of
the monodromy  and $n$ additional Casimirs.
Fixing values of $n$ additional Casimirs to $1$, we obtain a codimension $n$ subvariety $V$. 
The action of $W$ restricts to $V$.
Further, $V$ is a cluster variety whose coordinate ring is equipped with the cluster structure $\CC$. 
In particular, $\CC$ inherits the $W$-action. 

There is a natural projection  $\pi:V\to D(GL_n)$  
with a fiber $W\times H$ where $H$ is the Cartan subgroup.
We conjecture that the projection $\pi$ provides a natural connection between generalized cluster structures $\GCC_i$ in 
${\mathbb C}[ D(GL_n)]$ for $i=1,2$ and $\CC$. More precisely, the pullbacks of all cluster variables in $\GCC_i$ are 
$W$-invariant cluster variables in $\CC$. 
Furthermore, each seed $\Sigma$ of the generalized cluster structure 
$\GCC_i$ contains  one cluster variable $g(\Sigma)$ attached to the special vertex of the quiver and satisfying a 
generalized mutation rule, while the remaining cluster variables  obey the usual mutation rules. 
The rank of  $\CC$ is $n-1$ more than the rank of $\GCC_i$. Any seed $\Sigma$ of $\GCC_i$ corresponds 
to a seed $\tilde \Sigma$ of $\CC$ in which $g(\Sigma)$ corresponds to an $n$-tuple of cluster variables 
$\tilde g_j(\tilde \Sigma)$, $j\in[1,n]$, such that $\pi^*g(\Sigma)=\prod_{j=1}^n \tilde g_j(\tilde \Sigma)$.
The remaining cluster variables of the seed $\tilde \Sigma$ and frozen variables of $\CC$ are obtained as pullbacks 
$\pi^*$ of the corresponding cluster variables of $\Sigma$ and frozen variables in $\GCC_i$.
The generalized mutation of $g(\Sigma)$ corresponds to the composition $s(\tilde \Sigma)$ of $n$ mutations at all 
$\tilde g_j(\tilde\Sigma)$ taken in any order (mutations of $\tilde g_j(\tilde\Sigma)$ commute). Namely, let
$g'$ denote the function obtained by the generalized mutation of  $g(\Sigma)$   and  
$\tilde g'_j=s(\tilde \Sigma)(\tilde g_j(\tilde\Sigma))$, then 
$\pi^*g'=\prod_{j=1}^n \tilde g'_j$. Further, the set of seeds of $\CC$ corresponding to $\GCC_1$ is disjoint from 
the set of seeds corresponding to $\GCC_2$.

A detailed proof will be presented elsewhere.

\begin{remark}
Let $G$ be a semisimple  complex Lie group with the Lie algebra $\mathfrak g$.
The group $G$ is equipped with the standard Poisson-Lie structure. In~\cite{Shen} the semiclassical limit of ${\mathcal U}_q({\mathfrak g})$ is realized as a quotient by an ideal generated by Poisson central elements
of the $W$-invariant subring of the coordinate ring of the second moduli space 
 ${\mathcal X}_{G,S_{0,1,2}}$ of Fock--Goncharov cluster ensemble, where  $S_{0,1,2}$ is a once punctured disk with two marked points on the boundary. This construction seems to be closely related to the projection $\pi$ above. However, no cluster structure on the $W$-invariant subring was considered in ~\cite{Shen}. 
\end{remark}

\section{Reduction to a generalized cluster structure on band periodic matrices}\label{bandmat}

In \cite{GSVstaircase} we presented a framework for constructing  generalized cluster structures. It is  based on certain identities associated with periodic staircase shaped matrices. One  of the examples considered in
\cite{GSVstaircase} was the generalized cluster structure $\NGCC_n^D$ that we treated in previous sections. 
Another example was a generalized cluster structure on the space 
of $(k+1)$ diagonal $n$-periodic band matrices with $k\leq n$. In this section, we will show how the latter structure, denoted here by $\GCC(\L_{kn})$ can be obtained as a restriction of the former. In particular, this will allow us to obtain an analogue of Theorem \ref{newstructure} for $\GCC(\L_{kn})$.

\subsection{Initial cluster}\label{bandinit}
In the case of the Drinfeld double $D(GL_n)$, the periodic staircase matrix mentioned above is an infinite block bidiagonal matrix 
\begin{equation}
\label{shapeL}
L= \left [
\begin{array}{ccccccc}
\ddots &\ddots &\ddots &\ddots & & &\\
 & 0 & X & Y & 0 &  &\\
& & 0 & X & Y & 0 &\\
 & & &\ddots &\ddots &\ddots &\ddots
 \end{array}
\right ],
\end{equation}
that corresponds to $(X,Y)\in D(GL_n)$.  Now we drop the invertibility requirement for $X$ and 
 choose $Y$ to be a lower triangular band matrix with  min$(k+1,n)$ non-zero diagonals 
(including the main diagonal) and $X$ to be  a
 matrix with zeroes everywhere outside of the $k\times k$ upper triangular block in the upper right corner:
\begin{equation}
\label{XYband}
\begin{aligned}
X &= \left [
\begin{array}{cccccc}
0 & \cdots & 0 & a_{11} & \cdots & a_{k1}\\
0 & \cdots& 0 & 0 & a_{12} & \cdots \\
\vdots & \vdots & \vdots & \vdots & \ddots& \ddots \\
0 & \cdots & 0 & \cdots  & 0 & a_{1k}\\
0 & \cdots & 0 & \cdots & \cdots & 0\\
\vdots & \vdots & \vdots & \vdots & \vdots& \vdots \\
\end{array}
\right ],\\
Y&=
\left [
\begin{array}{cccccc}
a_{k+1,1} & 0 & \cdots & \cdots& \cdots & 0\\
a_{k2} & a_{k+1,2}  & 0 & \cdots & \cdots & \cdots \\
\vdots & \vdots & \ddots & \vdots & \vdots& \vdots \\
a_{1,k+1}& a_{2,k+1}& \cdots  & a_{k+1,k+1} & 0 &\cdots\\
0  & \ddots & \ddots & \vdots & \ddots& \vdots \\
0 & \cdots & a_{1n} & a_{2n} &\cdots & a_{k+1,n}
\end{array}
\right ] \quad\text{for $k<n$},\\
Y&= \left(a_{n+1-i+j,i} \right )_{i,j=1}^n\quad\text{for $k=n$},
\end{aligned}
\end{equation} 
where we assume that $a_{ti}=0$ when $t>k+1$. Then $L$ in \eqref{shapeL} is  a $(k+1)$-diagonal $n$-periodic band matrix.
We denote by $\L_{kn}$ the space of such matrices with an additional condition that all entries of the lowest and the 
highest diagonals are nonzero. 
Let $\bar \L_{kn}$ be the closure of $\L_{kn}$ in the space $D(\Mat_n)=\Mat_n\times \Mat_n$. Every element of 
$\bar\L_{kn}$ is identified with a pair of matrices of the form \eqref{XYband}. In particular, vanishing of the lowest 
diagonal yields an inclusion $\L_{k-1,n}\subset\bar\L_{kn}$.

Note that when such matrices are substituted into \eqref{Phi}, $\Phi$ becomes reducible with a leading irreducible block 
$\Phi^{(k)}$ of  size $(k-1)(n-1)\times(k-1)(n-1)$. For $i=1,\ldots, (k-1)(n-1)$ we define
\begin{equation}
\label{tildephi}
\tilde\phhi_i=\tilde\phhi_i^{(k)} = \det\Phi_{[i,(k-1)(n-1)]}^{[i,(k-1)(n-1)]}.
\end{equation}

By \cite{GSVstaircase}, a generalized cluster structure in the space of regular functions on $\L_{kn}$ is defined by the following data.

Define the family $\FFF_{kn}$ of $(k+1)n$ regular functions  on $\L_{kn}$ via 
\begin{equation}
\label{bandseed}
\FFF_{kn}=\left\{\{\tilde\phhi_{i}^{(k)} \}_{i=1}^{(k-1)(n-1)};\ \tilde a_{11};\ \{a_{1i}\}_{i=2}^n;\ \{a_{k+1,i}\}_{i=1}^n;\
\{\tilde c_i(X,Y)\}_{i=1}^{k-1}\right\},
\end{equation}
where $\tilde a_{11}=(-1)^{k(n-1)}a_{11}$ and $\tilde c_i(X,Y)=(-1)^{i(n-1)}c_i(X,Y)$ for $1\le i\le k-1$ with $c_i(X,Y)$
satisfying the identity $\det(\lambda Y +\mu X)=\lambda^{n-k}\sum_{i=0}^k c_i(X,Y)\mu^i\lambda^{k-i}$.

Let $Q_{kn}$ be the quiver with $(k+1)n$ vertices, of which $k-1$ vertices are isolated and are not shown in the figure below, 
$(k+1)(n-1)$ are arranged in an $(n-1)\times (k+1)$ grid and denoted
$(i,j)$, $1\le i\le n-1$, $1\le j\le k+1$, and the remaining two are placed on top of the leftmost and the rightmost 
columns in the grid and denoted $(0,1)$ and $(0,k+1)$, respectively.  All vertices in the leftmost and in the 
rightmost columns are frozen. The vertex $(1,k)$ is special, and its multiplicity equals $k$. All other vertices are regular mutable vertices.

The edge set of $Q_{kn}$
consists of the edges $(i,j)\to (i+1,j)$ for $i=1,\ldots, n-2$, $j= 2,\ldots, k$; 
$(i,j)\to (i,j-1)$ for $i=1,\ldots, n-1$, $j= 2,\ldots, k$, $(i,j)\ne (1,k)$; 
$(i+1,j)\to (i,j+1)$ for $i=1,\ldots, n-2$, $j= 2,\ldots, k$, shown by solid lines. 
In addition, there are edges  $(n-1,3)\to (1,2)$, $(1,2)\to (n-1,4)$,
$(n-1,4)\to (1,3),\ldots, (1,k-1)\to(n-1,k+1)$ that form a directed path (shown by dotted lines). 
Save for this path, and the missing edge 
$(1,k)\to (1,k-1)$, mutable vertices of $Q_{kn}$ form a mesh of consistently oriented triangles

Finally, there are edges between the special vertex $(1, k)$ and frozen vertices $(i,1)$, $(i,k+1)$ for $i=0,\ldots n-1$. 
There are $k-1$ parallel edges between $(1,k)$ and $(i,k+1)$ for $i=1,\dots,n-1$, 
and one edge between $(1,k)$ and all other frozen vertices 
(including $(0,k+1)$).  The edge to $(0,k+1)$ is directed from $(1,k)$; if
$k>2$, all other edges are directed towards $(1, k)$, and if $k=2$, the direction
of the edge between $(1,1)$ and $(1,k)$ is reversed. 

Quiver $Q_{47}$ is shown in Figure~\ref{Qkn}. 

\begin{figure}[ht]
\begin{center}
\includegraphics[width=8cm]{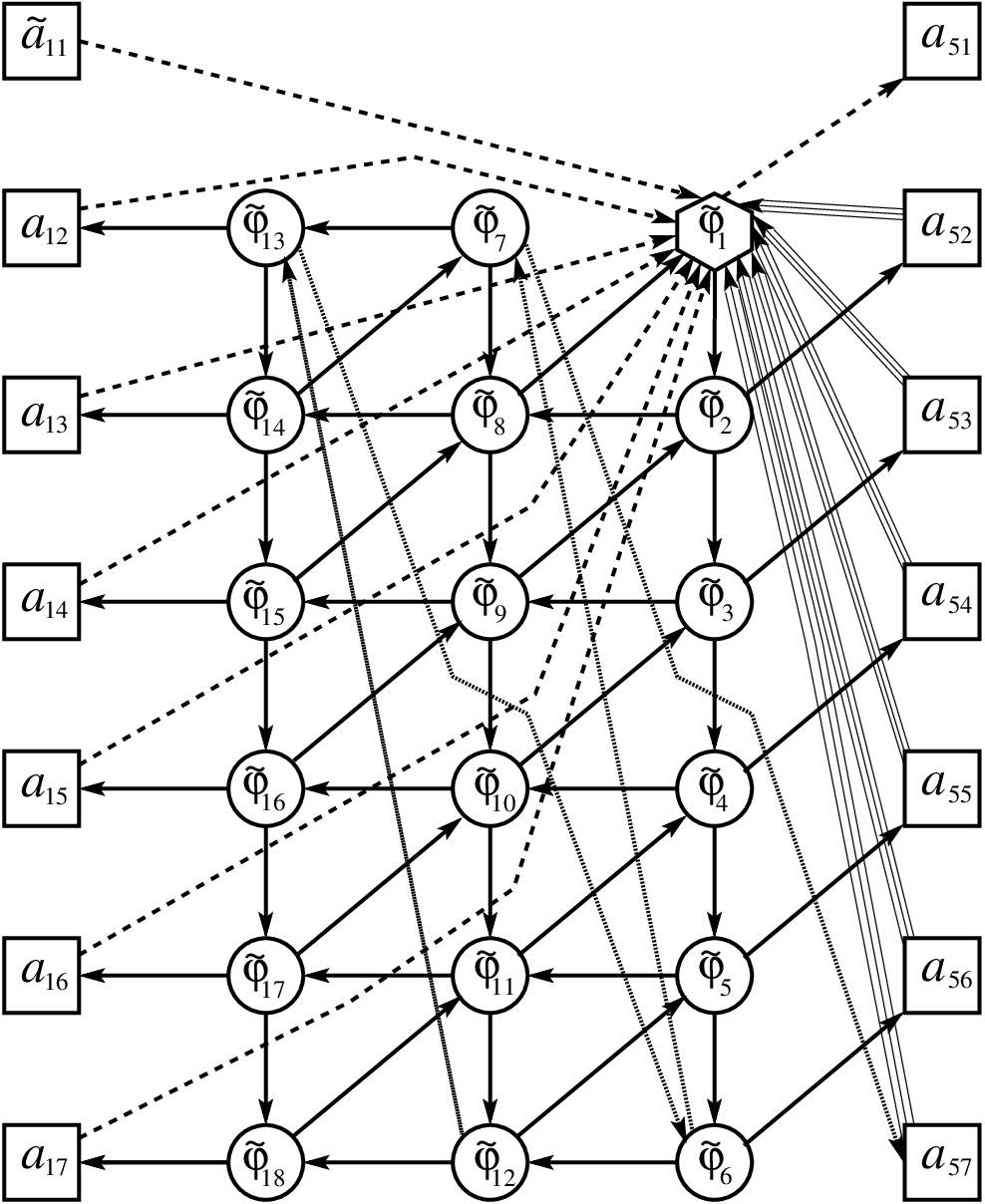}
\end{center}
\caption{Quiver $Q_{47}$}
\label{Qkn}
\end{figure}

We attach functions $\tilde a_{11}, a_{12}, \ldots, a_{1n}$, in a top to bottom order, to the vertices of the leftmost 
column in $Q_{kn}$, and functions $a_{k+1,1},\dots, a_{k+1,n}$, in the same order, to the vertices of the rightmost column
in $Q_{kn}$. Functions $\tilde\phhi_{i}$ are attached, in a 
top to bottom, right to left order, to the remaining vertices of $Q_{kn}$, starting with $\tilde\phhi_{1}$ attached to the special vertex $(1,k)$. The set of strings $\P_{kn}$ contains a unique nontrivial string 
$(1,\tilde c_1(X,Y),\dots,\tilde c_{k-1}(X,Y),1)$ corresponding to the unique special vertex.

\begin{theorem}
\label{Band_structure}
The seed $\Sigma_{kn}=(\FFF_{kn}, Q_{kn},\P_{kn)}$ defines a complete generalized cluster structure  
$\GCC(\Sigma_{kn})$ in the ring of regular functions on $\L_{kn}$ compatible with the restriction of the standard Poisson--Lie structure on $D(\Mat_n)$. 
\end{theorem}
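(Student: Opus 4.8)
The plan is to deduce Theorem \ref{Band_structure} from Theorem \ref{newstructure} by exhibiting $\GCC(\Sigma_{kn})$ as a \emph{restriction} of $\NGCC_n^D$, using the restriction operation for generalized cluster structures developed in \cite{GSVMem, GSVDouble}. First I would record the geometry: a pair $(X,Y)$ of the form \eqref{XYband} lies in $D(\Mat_n)\setminus D(GL_n)$, and $\bar\L_{kn}$ is precisely the subvariety of the ambient space cut out by the vanishing of the entries of $Y$ above its $k+1$ lowest diagonals, of the entries of $X$ outside the $(k-1)\times(k-1)$ corner block, and hence of a distinguished collection of members of $\bar\FFF_n$ (among them $g_{11}=\det X$ and the coefficients $c_k,\dots,c_{n-1}$). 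The inclusion chain $\L_{k-1,n}\subset\bar\L_{kn}$ of Section \ref{bandinit} shows that these vanishings can be organized so that one descends from $\NGCC_n^D$ to $\GCC(\Sigma_{kn})$ by a sequence of such restrictions, each deleting --- rather than specializing to a nonzero constant --- the corresponding frozen vertices; the restriction formalism then guarantees that completeness and Poisson compatibility are inherited at every step, provided the deleted functions form a legitimate frozen sub-collection and the restriction is clean.

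Next I would match the seed data. When $(X,Y)$ of the form \eqref{XYband} is substituted into \eqref{Phi}, $\Phi$ becomes block lower triangular with leading irreducible block $\Phi^{(k)}$ of size $(k-1)(n-1)$, and one checks that each trailing principal minor $\phhi_i$ restricts, up to a monomial factor in the boundary entries $a_{1i}$ and $a_{k+1,i}$, either to the minor $\tilde\phhi^{(k)}_i$ of \eqref{tildephi} or to such a monomial; the minors $g_{ij}$, $h_{ij}$ restrict to monomials in those boundary entries; and the $\tilde c_i$ restrict to the band functions of the same name, the higher ones vanishing. Consequently $\bar Q_n$ restricts to $Q_{kn}$ by freezing the vertices whose functions become monomials in frozen variables, discarding the resulting isolated vertices, and relabeling the surviving $\phhi$-, $g$- and $h$-vertices as in Section \ref{bandinit}; the special vertex of $\bar Q_n$ stays special and becomes $(1,k)$, its string $(1,\tilde c_1,\dots,\tilde c_{n-1},1)$ truncating to $(1,\tilde c_1,\dots,\tilde c_{k-1},1)$. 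Regularity of $\GCC(\Sigma_{kn})$ on $\L_{kn}$ then follows as in \cite[Section 5]{GSVstaircase}.

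With the reduction in place, compatibility (to be carried out in Section \ref{bandcompat}) can be obtained either by observing that the Sklyanin bracket \eqref{sklyadoubleGL} restricts to $\bar\L_{kn}$ and that, since every function deleted in the reduction is frozen in the restricted structure, log-canonicity of every extended cluster of $\NGCC_n^D$ and the hypotheses of Proposition \ref{compatchar} pass to $\GCC(\Sigma_{kn})$, or by repeating the computations of Sections \ref{logcan}--\ref{compat} verbatim with $\Phi$ replaced by $\Phi^{(k)}$ --- the invariance properties of Lemma \ref{inv}, the weight formulas \eqref{weights}, and the periodicity of the exponents $\varkappa_\ell$ all have exact analogs, and the toric action of left and right diagonal multiplication is extendable exactly as in Lemma \ref{gtaind}. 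Completeness (Section \ref{bandcomplet}) would mimic Section \ref{complet}: temporarily freezing the vertices carrying $\tilde\phhi^{(k)}_{k'(n-1)+1}$ together with a suitable set of boundary vertices turns a sub-seed of $\Sigma_{kn}$ into the initial seed of a standard cluster structure on a space of rectangular matrices built from the blocks of $\Phi^{(k)}$, via band analogs of Propositions \ref{tallmatrix}--\ref{longmatrix}; then \cite[Lemma 8.4]{GSVMem}, the irreducibility of the functions in $\FFF_{kn}$ (the band counterpart of \cite[Lemma 4.2]{GSVstaircase}), and the inductive ``nonzero constant term'' argument of Proposition \ref{matrixentries} show that every band entry $a_{ij}$ is a cluster variable of $\GCC(\Sigma_{kn})$, while $\tilde a_{11}$ is handled by a band analog of Lemma \ref{firstrow} using the generalized exchange relation for $\tilde\phhi_1$. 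This identifies $\UU(\GCC(\Sigma_{kn}))\otimes\C$ with $\O(\L_{kn})$.

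The main obstacle, I expect, is making the reduction genuinely rigorous rather than heuristic: one must verify that the restriction is \emph{clean}, i.e., that no generalized exchange polynomial of $\NGCC_n^D$ acquires a spurious common factor or vanishes identically on $\bar\L_{kn}$, equivalently that the deleted functions really do form a legitimate frozen sub-collection whose common zero locus in $D(\Mat_n)$ is exactly $\bar\L_{kn}$ with no extra components, and that the restrictions can be iterated compatibly along $\L_{k-1,n}\subset\bar\L_{kn}$. The second delicate point is the completeness step, where controlling the Laurent-monomial ambiguity in the matrix-entry argument requires both the band irreducibility lemma and the inductive constant-term argument exactly as in the proof of Proposition \ref{matrixentries}; most of the work will lie there.
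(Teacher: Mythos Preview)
Your outline is close to the paper's in spirit---both descend from Theorem~\ref{newstructure} along the chain $\bar\L_{nn}\supset\cdots\supset\bar\L_{kn}$ using the factorization $\phhi_i=\tilde\phhi_i^{(k)}\cdot\phhi_{(k-1)(n-1)+1}$---but two of your mechanisms do not work as stated. For compatibility the paper does not invoke a black-box ``restriction formalism''; it runs a downward induction on $k$ in which the key computation is that every $y$-variable $y_i^{(n)}$ of $Q_{nn}$ equals the restriction to $\L_{nn}$ of the corresponding $y_i$ of $\bar Q_n$ (using $g_{jj}=a_{1j}\cdots a_{1n}$, $h_{jj}=a_{n+1,j}\cdots a_{n+1,n}$ and~\eqref{factor_n}), so that the identity $\{\log y_i,\log\phhi_j\}=d_i\delta_{ij}$ transfers; the step $k\to k-1$ is identical. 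Your seed-matching claim, that $Q_{kn}$ arises from $\bar Q_n$ by freezing and then deleting isolated vertices, is in fact false: in $Q_{kn}$ the special vertex $(1,k)$ is joined to \emph{every} frozen vertex, with $k-1$ parallel edges to each $(i,k+1)$ for $i\ge 1$, and these edges have no antecedent in $\bar Q_n$. So $\Sigma_{kn}$ is not a sub-seed of $\bar\Sigma_n$ in any naive sense; what survives the restriction is precisely the equality of $y$-variables, and that is what the paper checks. Your alternative~(b), redoing Sections~\ref{logcan}--\ref{compat} with $\Phi^{(k)}$, would presumably go through but is longer.

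For completeness the auxiliary cluster structure is not the standard one on full rectangular matrices. The matrix $T$ of Proposition~\ref{bandlongmatrix} is an $(n-1)\times(n+k-1)$ \emph{band} matrix, and the comparison is with the cluster structure $\CC(\L'_{k,n-1})$ of~\cite{quasichris}, obtained via a quasi-isomorphism from the cone over $\Gr(n-1,n+k-1)$; the initial seed there has to be identified explicitly and justified by analyzing boundary exchange relations. Moreover one freezing does not suffice: after recovering the $a_{ij}$ with $(i,j)\notin R=\{(l,m):3\le l\le k,\ n-l+3\le m\le n\}$ from $\CC(\L'_{k,n-1})$, a second freezing (of the anti-diagonal $\tilde\phhi_{jn+1}$ together with a parallel strip) is compared with $\CC(\L'_{2k-2,k-1})$ to capture the remaining corner entries. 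Finally, the entries requiring the Lemma~\ref{firstrow} analogue are $a_{i1}$ for $i=2,\dots,k$ (the nontrivial first-row entries of $X$ in~\eqref{XYband}), not $\tilde a_{11}$, which is already frozen.
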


\begin{proof}  The proof follows closely that for Theorem \ref{newstructure}. Regularity of $\GCC(\Sigma_{kn})$ 
is borrowed from \cite[Theorem 5.1]{GSVstaircase}. The proof of log-canonicity and of compatibility is based on the downward
induction on $k$, see
Section \ref{bandcompat} below. The proof of completeness is a modification of a similar statement for the Drinfeld double
and relies on the same ideas, see Section \ref{bandcomplet}.
\end{proof}

\subsection{Compatible Poisson bracket}\label{bandcompat}

Let us re-write the Poisson bracket \eqref{sklyadoubleGL} in terms of matrix entries of a pair of matrices $(X,Y)$:
\begin{equation}
\begin{split}
\label{sklyadoubleXY}
\{ x_{ij}, x_{pq}\} &= \frac 1 2 \left ( \sign (p-i) +  \sign (q - j) \right ) x_{iq} x_{pj},\\
\{ y_{ij}, y_{pq}\} &= \frac 1 2 \left ( \sign (p-i) +  \sign (q - j) \right ) y_{iq} y_{pj},\\
\{ y_{ij}, x_{pq}\} &=  \frac 1 2\left (1+\sign (q - j))  y_{iq} x_{pj} - (1+\sign(i-p)) x_{iq} y_{pj}\right ).
\end{split}
\end{equation}
This Poisson bracket can be extended to $D(\Mat_n)$. It follows from \eqref{sklyadoubleXY} that every inclusion 
in the chain
\[
D(\Mat_n) \supset \bar\L_{nn} \supset \bar \L_{n-1,n} \supset \cdots \supset \bar\L_{kn} \supset \cdots  \supset \bar\L_{2n} 
\]
is a Poisson map. The same is true about inclusions $\L_{kn} \subset \bar\L_{kn}$.

\begin{proposition}
\label{log-can-band}
The family $\FFF_{kn}$ defined in \eqref{bandseed} is log-canonical with respect to the restriction of the Poisson bracket \eqref{sklyadoubleXY} to  $\L_{kn} $.
\end{proposition}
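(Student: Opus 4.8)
The strategy is a downward induction on $k$, starting from the ``top'' case $k=n$, where $\L_{nn}$ is an open dense subset of $D(GL_n)$ (after dropping the invertibility of $X$, it is the open subset where $Y$ is lower triangular with nonzero extreme diagonals and $X$ is supported in the upper-right $(n-1)\times(n-1)$ triangle). In that case the functions in $\FFF_{nn}$ are exactly the restrictions of the functions in $\bar\FFF_n$ from Section \ref{init}: the minors $\tilde\phhi_i^{(n)}$ agree with the $\phhi_i$, the boundary variables $a_{1i}$, $a_{n+1,i}$ agree with appropriate $g_{ij}$ and $h_{ij}$, and the $\tilde c_i$ are the same coefficients of $\det(\lambda Y+\mu X)$. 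Hence log-canonicity of $\FFF_{nn}$ is immediate from the log-canonicity of $\bar\FFF_n$ established in Section \ref{logcan}, since restriction to a Poisson submanifold preserves log-canonical brackets. (One must only check that the identification of the families is correct, which is a matter of unwinding the definitions \eqref{Phi}, \eqref{tildephi}, \eqref{bandseed}.)

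For the inductive step, I would use the fact, recorded just before the Proposition, that the inclusion $\bar\L_{k-1,n}\subset\bar\L_{kn}$ is Poisson, together with the inclusion $\L_{kn}\subset\bar\L_{kn}$ being Poisson. Thus it suffices to compute the bracket of any two functions in $\FFF_{kn}$ on the larger space $\bar\L_{kn}$ and observe that the answer is log-canonical. The key observation is that $\bar\L_{k-1,n}$ is the locus in $\bar\L_{kn}$ where the lowest diagonal of $Y$ (equivalently the variables $a_{k+1,i}$, renamed suitably) vanishes, and that on this locus the matrix $\Phi^{(k)}$ degenerates so that its trailing minors $\tilde\phhi_i^{(k)}$ reduce, up to the obvious reindexing, to the minors $\tilde\phhi_i^{(k-1)}$ together with a product of boundary variables. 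Concretely: the plan is to show that each $\tilde\phhi_i^{(k)}$, as a function on $\bar\L_{kn}$, is log-canonical with all other members of $\FFF_{kn}$ by relating its gradient to that of a function already known (by the induction hypothesis, applied on $\bar\L_{k-1,n}$) to be log-canonical, plus contributions from the extra diagonal which are controlled by the explicit bracket \eqref{sklyadoubleXY} among the $a_{k+1,i}$, $a_{1i}$. The brackets among the boundary variables $\tilde a_{11}$, $\{a_{1i}\}$, $\{a_{k+1,i}\}$ are computed directly from \eqref{sklyadoubleXY}: these are matrix entries of $X$ and $Y$, and \eqref{sklyadoubleXY} shows that the bracket of two matrix entries is a monomial in matrix entries, which for entries lying on a single diagonal (or on the extreme diagonals) collapses to a log-canonical expression. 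The $\tilde c_i$ are Casimirs, so they bracket trivially with everything.

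The main obstacle is the mixed brackets $\{\tilde\phhi_i^{(k)},\tilde\phhi_j^{(k)}\}$, i.e.\ exactly the analogue of the computation carried out in Section \ref{logcan} for the $\phhi$-family on $D(GL_n)$. The cleanest route is probably not to redo that computation from scratch for each $k$, but to exhibit, for the band case, an analogue of the invariance properties \eqref{inv_prop} and of the block-gradient identities \eqref{nablaPhi}--\eqref{nablaPhiEntries}: the matrix $\Phi^{(k)}$ has the same block-bidiagonal structure as $\Phi$ but with blocks $\X=X_{[\,\cdot\,]}$, $\Y=Y_{[\,\cdot\,]}$ of the appropriate smaller size, and the same telescoping argument that produced \eqref{sums} should apply, yielding that $\{\log\tilde\phhi_i^{(k)},\log\tilde\phhi_j^{(k)}\}$ equals a constant $\varkappa$-type quantity coming from the homogeneity of the minors under the scaling $Y\mapsto e^t Y$. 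I would therefore: (1) establish the homogeneity weights of the $\tilde\phhi_i^{(k)}$ under left/right diagonal scalings and under $Y\mapsto e^tY$, mimicking \eqref{kappas}; (2) establish the band analogues of \eqref{inv_prop} (invariance under unipotent multiplication on the appropriate sides); (3) feed these into \eqref{sklyadoubleXY} exactly as in Section \ref{logcan} to get log-canonicity of the $\tilde\phhi^{(k)}$ family; and (4) combine with the boundary-variable computation and the Casimir observation, using the Poisson inclusions to transfer everything down from $\bar\L_{kn}$. The place where care is genuinely needed is bookkeeping the index shifts relating $\tilde\phhi_i^{(k)}$ on $\bar\L_{kn}$ to $\tilde\phhi_i^{(k-1)}$ and to products of $a_{k+1,i}$'s on the boundary divisor, since the inductive hypothesis is invoked through that identification.
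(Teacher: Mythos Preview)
Your downward induction on $k$ is the right overall strategy and matches the paper, but your base case contains a genuine error that propagates through the argument.

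First, $\L_{nn}$ is \emph{not} an open dense subset of $D(GL_n)$: the conditions that $X$ be supported in the upper-right triangle and $Y$ be lower triangular are closed (polynomial) conditions cutting out a proper subvariety. Consequently, the functions $\tilde\phhi_i^{(n)}$ are \emph{not} the restrictions of $\phhi_i$. What actually happens is that when $(X,Y)\in\L_{nn}$ the matrix $\Phi$ becomes block reducible, with $\Phi^{(n)}$ as the leading $(n-1)^2\times(n-1)^2$ block, so the trailing minors factor:
\[
\phhi_i\big|_{\L_{nn}}=\tilde\phhi_i^{(n)}\cdot\phhi_{(n-1)^2+1}\qquad(i\le(n-1)^2).
\]
Likewise $h_{ii}$ and $g_{ii}$ restrict to products $a_{n+1,i}\cdots a_{n+1,n}$ and $a_{1i}\cdots a_{1n}$, not to single entries. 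The paper's argument is then the one-line observation that if $\{\log\phhi_i,\log\phhi_j\}=\omega_{ij}$ is constant on $D(GL_n)$ (hence on the Poisson submanifold $\L_{nn}$), the factorization gives
\[
\{\log\tilde\phhi_i^{(n)},\log\tilde\phhi_j^{(n)}\}=\omega_{ij}-\omega_{i,(n-1)^2+1}+\omega_{j,(n-1)^2+1},
\]
which is again constant. The same factorization-of-minors trick drives the inductive step: on $\L_{k-1,n}\subset\bar\L_{kn}$ one has $\tilde\phhi_i^{(k)}=\tilde\phhi_i^{(k-1)}\cdot\tilde\phhi_{(k-2)(n-1)+1}^{(k)}$, and the same subtraction of constants applies.

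Your proposed alternative route---re-establishing analogues of the invariance properties \eqref{inv_prop} and rerunning the gradient computation of Section~\ref{logcan} directly for each $\Phi^{(k)}$---might in principle work, but it is far more labor than needed and is not what the paper does. Once you fix the base case to use the factorization rather than a spurious identification, the entire proof becomes this short recursive ratio argument, with no need to revisit the block-gradient identities at all.
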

\begin{proof}  
For $k=n$, substitute $(X,Y) \in \L_{nn}$ into \eqref{Phi}. As was mentioned above, $\Phi$ becomes reducible with an irreducible $(n-1)^2 \times (n-1)^2$ upper left block $\Phi^{(n)}$ and, for $ i \leq (n-1)^2$, 
functions $\phhi_i$ restricted to $\L_{nn}$ factor as 
\begin{equation}
\label{factor_n}
\phhi_i = \tilde\phhi^{(n)}_i \phhi_{(n-1)^2+1}.
\end{equation}
 By Theorem \ref{newstructure}, Poisson brackets $\{ \log\phhi_i, \log\phhi_j\} = \omega_{ij}$ are constant on $D(GL_n)$ and, by extension, on $D(\Mat_n)$. Since $\L_{nn}$ is a Poisson submanifold in $D(\Mat_n)$, we obtain
$\{ \log\tilde\phhi^{(n)}_i, \log\phhi_{(n-1)^2+1}\} = \omega_{i,(n-1)^2+1}$ for $ i \leq (n-1)^2$, 
and thus
\[
\{\log\tilde\phhi^{(n)}_i, \log\tilde\phhi^{(n)}_j\} = \omega_{ij} - \omega_{i,(n-1)^2+1} + \omega_{j,(n-1)^2+1}
\]
 is constant on $\L_{nn}$ for any $i,j =1,\ldots, (n-1)^2$; we denote this constant by $\omega^{(n)}_{ij}$.
Furthermore, on $\L_{nn}$ we have $h_{ii}(Y) = a_{n+1,i}\cdots a_{n+1,n}$, $g_{ii}(X)=a_{1i}\cdots a_{1n}$, and so the 
log-canonicity of the entire family $\FFF_{nn}$ follows from the log-canonicity of $\bar\FFF_n$. By extension, we get 
the log-canonicity of the family $\FFF_{nn}$ on $\bar\L_{nn}$.

Using induction, assume that $\{\log\tilde\phhi^{(k)}_i, \log\tilde\phhi^{(k)}_j\} =\omega^{(k)}_{ij}$ is constant on 
$\bar\L_{kn}$ for any $i,j =1,\ldots, (k-1)(n-1)$. Substituting $(X,Y) \in \L_{k-1,n}$ into the 
$(k-1)(n-1)\times (k-1)(n-1)$ matrix $\Phi^{(k)}$ makes it reducible  with an irreducible 
$(k-2) (n-1) \times (k-2)(n-1)$ upper left block $\Phi^{(k-1)}$ and functions $\tilde\phhi^{(k)}_i$ restricted to 
$\L_{k-1,n}$ factor as $\tilde\phhi^{(k)}_i = \tilde\phhi^{(k-1)}_i \tilde\phhi^{(k)}_{(k-2)(n-1)+1}$
for $ i \leq (k-2) (n-1)$.  In addition, $\tilde\phhi^{(k)}_{(k-2)(n-1)+i}= a_{1,i+1}\cdots a_{1n}$ for $i=2,\ldots, n$.
Arguing precisely as above, we conclude  that
\begin{equation}
\label{band_const}
\{\log\tilde\phhi^{(k-1)}_i, \log\tilde\phhi^{(k-1)}_j\} = \omega^{(k)}_{ij} - \omega^{(k)}_{i,(k-2)(n-1)+1} + 
\omega^{(k)}_{j,(k-2)(n-1)+1}
\end{equation}
is a constant on $\L_{k-1,n}$ for any $i,j =1,\ldots, (k-2)(n-1)$ and denote it by $\omega^{(k-1)}_{ij}$. Therefore, 
the log-canonicity of the entire family $\FFF_{k-1,n}$ on $\L_{k-1,n}$, and hence on $\bar\L_{k-1,n}$, 
follows from the log-canonicity of $\FFF_{kn}$
on $\bar\L_{kn}$.
\end{proof}

\begin{proposition}
\label{compat-band}
The Poisson bracket \eqref{sklyadoubleXY} is compatible with the generalized cluster 
structure $\GCC(\Sigma_{kn})$ on $\L_{kn}$. 
\end{proposition}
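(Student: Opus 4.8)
The plan is to prove the statement by downward induction on $k$, in parallel with the proof of Proposition~\ref{log-can-band}, reducing at each stage to a compatibility fact that has already been established. Throughout, we invoke Proposition~\ref{compatchar}: to prove compatibility of $\GCC(\Sigma_{kn})$ it is enough to check, for the initial seed $\Sigma_{kn}$, that every $y$-variable $y_v$ attached to a mutable vertex $v$ of $Q_{kn}$ satisfies $\{\log x_u,\log y_v\}=\lambda d_v\delta_{uv}$ for all vertices $u$ and a single rational $\lambda$, and that the Casimir monomials $\hat p_{1r}$ of the unique special vertex $(1,k)$ are Casimirs of the restriction of the bracket \eqref{sklyadoubleXY}. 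The mechanism behind the reduction is that these conditions are inherited under the operation ``restrict all generating functions to a Poisson submanifold, declare a set of mutable vertices frozen, and divide the remaining cluster variables by monomials in the now-frozen variables'': restriction to a Poisson submanifold $V'$ respects brackets, $\{f|_{V'},g|_{V'}\}=\{f,g\}|_{V'}$, so log-canonicity and the Casimir property descend, while dividing a cluster variable by a monomial in (frozen) cluster variables is a monomial change of coordinates that preserves all log-canonicity relations and carries the $y$-variables and Casimirs of the larger structure to those of the smaller one.

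For the base case $k=n$ we use Theorem~\ref{newstructure}: $\NGCC_n^D$ is compatible with $\Poi_D$, and, as in the proof of Proposition~\ref{log-can-band}, the relevant log-canonicity and $y$-variable relations among the functions of $\bar\FFF_n$ (being identities in their Poisson brackets) extend from $D(GL_n)$ to $D(\Mat_n)$ and hence restrict to the Poisson submanifold $\bar\L_{nn}$. On $\L_{nn}$ the factorizations recorded in the proof of Proposition~\ref{log-can-band} hold: $\phhi_i=\tilde\phhi^{(n)}_i\,\phhi_{(n-1)^2+1}$ for $i\le(n-1)^2$, together with $g_{ii}=a_{1i}\cdots a_{1n}$ and $h_{ii}=a_{n+1,i}\cdots a_{n+1,n}$. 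Using them one checks that $\Sigma_{nn}$ is obtained from $\bar\Sigma_n$ by restricting to $\L_{nn}$, freezing the appropriate vertices, and dividing the $\phhi_i$ by the relevant power of the frozen variable $\phhi_{(n-1)^2+1}=g_{11}$; matching the quivers $Q_{nn}$ and $\bar Q_n$ and the nontrivial strings (which coincide since the $\tilde c_i$ are unchanged by the restriction) yields compatibility of $\GCC(\Sigma_{nn})$ on $\L_{nn}$, and hence, by extension, on $\bar\L_{nn}$. For the inductive step, assume $\GCC(\Sigma_{kn})$ is compatible with the restricted bracket on $\bar\L_{kn}$. Restrict to the Poisson submanifold $\L_{k-1,n}\subset\bar\L_{kn}$ and use the factorizations $\tilde\phhi^{(k)}_i=\tilde\phhi^{(k-1)}_i\,\tilde\phhi^{(k)}_{(k-2)(n-1)+1}$ for $i\le(k-2)(n-1)$ and $\tilde\phhi^{(k)}_{(k-2)(n-1)+i}=a_{1,i+1}\cdots a_{1n}$ for $i=2,\ldots,n$ to identify $\Sigma_{k-1,n}$ with the result of freezing, restricting, and dividing in $\Sigma_{kn}$; the conclusion then propagates exactly as in Proposition~\ref{log-can-band}, first to $\L_{k-1,n}$ and then to $\bar\L_{k-1,n}$.

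The main obstacle is the combinatorial bookkeeping in the reduction step: one must verify that $Q_{k-1,n}$, together with the functions attached to its vertices and its nontrivial string, is precisely the quiver produced from $Q_{kn}$ by freezing the vertices carrying $\tilde\phhi^{(k)}_{(k-2)(n-1)+1}$ and $\tilde\phhi^{(k)}_{(k-2)(n-1)+i}$ for $i=2,\ldots,n$, and that the generalized exchange relation at the special vertex descends correctly after division by the appropriate monomial. This requires a version of \cite[Lemma 8.4]{GSVMem} for generalized exchange relations, guaranteeing that every mutation sequence of $\GCC(\Sigma_{k-1,n})$ lifts to one of $\GCC(\Sigma_{kn})$ that avoids the frozen-in vertices, so that each cluster of the smaller structure is a restricted-and-rescaled cluster of the larger one; only then does compatibility transfer to \emph{all} clusters, as required by the definition. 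A secondary point is the Casimir condition: one checks that the monomials $\hat p_{1r}$ for $\Sigma_{kn}$, after the monomial adjustment, are exactly the monomials $\hat p_{1r}$ for $\Sigma_{k-1,n}$, which reduces to the facts that the $\tilde c_i$ are, up to sign, coefficients of the characteristic polynomial of $\lambda Y+\mu X$ and restrict consistently along $\L_{k-1,n}\subset\bar\L_{kn}$, and that the frozen-variable part of each $\hat p_{1r}$ is a Casimir -- the latter being inherited, via the same restriction argument, from the corresponding statement for $\NGCC_n^D$ established in Section~\ref{compat}. Alternatively, one may bypass the induction altogether and verify the hypotheses of Proposition~\ref{compatchar} for $\Sigma_{kn}$ directly, computing the left and right weights $\xi_L,\xi_R$ of the functions in $\FFF_{kn}$ in the manner of Lemma~\ref{gtaind} and running a case analysis as in Section~\ref{compat}; the inductive route is, however, considerably shorter.
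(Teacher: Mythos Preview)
Your inductive scheme matches the paper's: both proceed by downward induction on $k$, using the factorizations $\tilde\phhi^{(k)}_i=\tilde\phhi^{(k-1)}_i\tilde\phhi^{(k)}_{(k-2)(n-1)+1}$ on $\L_{k-1,n}$ (and the analogue on $\L_{nn}$) to reduce to the already established case. The difference is in how the verification is carried out. You frame the reduction as a ``freeze certain vertices, then rescale by frozen monomials'' operation on seeds, and you correctly identify that the combinatorial bookkeeping---matching $Q_{k-1,n}$ with the frozen-and-rescaled $Q_{kn}$, and likewise $Q_{nn}$ with $\bar Q_n$---is the main obstacle. The paper bypasses this entirely: rather than matching quivers, it simply computes the $y$-variables $y_i^{(k)}$ of $Q_{kn}$ explicitly and shows that, as functions on $\L_{kn}$, they coincide with the $y$-variables $y_i^{(k+1)}$ of $Q_{k+1,n}$ (respectively with the $y_i$ of $\bar Q_n$ in the base case). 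Since Proposition~\ref{compatchar} only requires the relations $\{\log x_u,\log y_v\}=\lambda d_v\delta_{uv}$ at the initial seed, equality of $y$-variables plus the factorization $\phhi_j=\tilde\phhi^{(n)}_j\phhi_{(n-1)^2+1}$ immediately gives $\{\log y_i^{(n)},\log\tilde\phhi^{(n)}_j\}=\{\log y_i,\log\phhi_j-\log\phhi_{(n-1)^2+1}\}=d_i\delta_{ij}$, and the induction step is identical.

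One point in your write-up is a red herring: you do \emph{not} need a version of \cite[Lemma~8.4]{GSVMem} to lift mutation sequences and thereby transfer compatibility to all clusters. Proposition~\ref{compatchar} is a statement about the initial seed only; once its hypotheses are checked there, compatibility with every cluster follows automatically. So the sentence ``only then does compatibility transfer to all clusters, as required by the definition'' overstates what is needed. Your alternative suggestion---direct verification via weights as in Section~\ref{compat}---would also work, but as you note the inductive route is shorter, and the paper's $y$-variable computation makes it shorter still.
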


\begin{proof} We will use induction again.  Within this proof, it will be convenient to refer to the vertex in
$\bar Q_{n}$ to which the variable $\phhi_{i}$ is attached and to the vertex in $Q_{nn}$ to which the variable 
$\tilde \phhi_{i}$ is attached as the vertex $i$ in the corresponding quiver.
Assume first that $k=n$, and let $y_i$ be the $y$-variable corresponding to 
the vertex $i$ in $\bar Q_{n}$ and $y_i^{(n)}$ be the $y$-variable corresponding to the vertex $i$ in $Q_{nn}$.
We claim that on $\L_{nn}$, $y_i^{(n)}=y_i$ for all mutable vertices  in $Q_{nn}$. 

Indeed, for  $n < i \leq (n-2)(n-1)$, the neighborhood of the vertex labeled $i$ in $Q_{nn}$ is identical to the 
neighborhood of the vertex labeled $i$ in $\bar Q_{n}$. We claim that on $\L_{nn}$, $y_i^{(n)}=y_i$. 

For $(n-2)(n-1) < i \leq (n-1)^2$, let $i'=i- (n-2)(n-1)$. Then 
\[
y_i^{(n)}=\frac{\tilde\phhi^{(n)}_{i+1}\tilde\phhi^{(n)}_{i-n}a_{1,i' +1}}{\tilde\phhi^{(n)}_{i-1}\tilde\phhi^{(n)}_{i-n+1}} = \frac{\phhi_{i+1} \phhi_{i-n}  g_{i'+1,i'+1}}  {\phhi_{i-1} \phhi_{i-n+1}  g_{i'+2,i'+2}} = y_i,
\] 
by \eqref{factor_n} and since $g_{jj}=a_{1j}\cdots a_{1n}$ on $\L_{nn}$.

For $1 < i \leq n$, 
\[
y_i^{(n)}=\frac{  \tilde\phhi^{(n)}_{i+1}   \tilde\phhi^{(n)}_{i+n-1}  a_{n+1,i}}  {\tilde\phhi^{(n)}_{i-1}   \tilde\phhi^{(n)}_{i+n}} = \frac{\phhi_{i+1} \phhi_{i+n-1}  h_{ii}}  {\phhi_{i-1} \phhi_{i+n}  h_{i+1,i+1}} = y_i,
\] 
by \eqref{factor_n} and since $h_{jj}=a_{n+1,j}\cdots a_{n+1,n}$ on $\L_{nn}$.

Finally, 
\[
y_1^{(n)} = \left (\frac{  \tilde\phhi^{(n)}_{2} }  {\tilde\phhi^{(n)}_{n+1}}\right )^n \frac{a_{n+1,1}} {\prod_{j=1}^n a_{1j} 
\left ( \prod_{j=2}^n a_{n+1,j} \right )^{n-1}} = \left (\frac{\phhi_{2} }  {\phhi_{n+1} h_{22}}\right )^n \frac{h_{11}} 
{g_{11}} = y_1.
\]
Therefore $\{\log y_i^{(n)}, \log  \tilde\phhi^{(n)}_{j}\} =  \{\log y_i,  \log{\phhi_j} - \log{\phhi_{(n-1)^2+1}}\} = d_i \delta_{ij}$, where $d_1=n$ and $d_i=1$ otherwise. The induction step is performed in precisely the same fashion by showing that on $\L_{k-1,n}$, for all mutable vertices in $Q_{k-1,n}$, $y_i^{(k-1)}=y_i^{(k)}$.
\end{proof}

\begin{remark}
When restricted to $\L_{kn}$, the Poisson structure \ref{sklyadoubleXY} coincides with the one considered in a recent 
paper \cite{Anton} on the space of  {\em properly bounded $n$-periodic difference operators}. A modification of that Poisson bracket for spaces of {\em sparse\/} pseudo difference operators was also considered in \cite{Anton} in order to derive complete integrability of a family of pentagram-like maps. It would be interesting to see if such a modification has a cluster-algebraic meaning as well.
\end{remark}

\subsection{Completeness}\label{bandcomplet}

The next two propositions are analogous  to Propositions \ref{tallmatrix}, \ref{longmatrix} and can be proved in exactly the same way. 

\begin{proposition}
\label{bandtallmatrix}
There exists a $(k-1)\times(k-1)$ unipotent upper triangular matrix $G=G(X, Y)$ such that

{\rm(i)} entries of $G$ are rational functions in $X$, $Y$ whose denominators are monomials in cluster variables 
$\tilde\phhi_{j n +1}$, $j=1,\ldots, k-2$, and 

{\rm(ii)} the $(2k-2)\times (k-1)$ matrix $S=\begin{bmatrix} Y^{[n-k+2,n]}_{[n-k+2,n]}\\ G X_{[1,k-1]}^{[n-k+2,n]}\end{bmatrix}$ satisfies
\begin{equation}
\label{bandtallmatrixminors}
 \det G_{[ k+j-i , k+j-1]}^{[k-i , k-1]} = \frac{\tilde\phhi_{j n - i +1 }} {\tilde\phhi_{j n +1}}, 
\qquad j=1,\ldots, k-2,\quad i=1,\ldots, k-1.
\end{equation}
\end{proposition}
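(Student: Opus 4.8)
The plan is to mirror, almost verbatim, the proof of Proposition \ref{tallmatrix}, adapting only the bookkeeping to the band case. First I would substitute a pair $(X,Y)\in\L_{kn}$ of the special form \eqref{XYband} into the matrix $\Phi$ of \eqref{Phi}; as noted right before \eqref{tildephi}, $\Phi$ becomes reducible with leading irreducible block $\Phi^{(k)}$ of size $(k-1)(n-1)\times(k-1)(n-1)$, and $\tilde\phhi_i=\tilde\phhi_i^{(k)}$ is the $i$th trailing principal minor of $\Phi^{(k)}$. Since the relevant data ($X_{[1,k-1]}^{[n-k+2,n]}$ is the nonzero $(k-1)\times(k-1)$ upper-triangular block of $X$, and $Y_{[n-k+2,n]}^{[n-k+2,n]}$ is the $(k-1)\times(k-1)$ block carrying the lowest diagonals of $Y$), the matrix $\Phi^{(k)}$ is itself block bidiagonal of the same shape as $\Phi$ but with $(k-1)$ block rows/columns in place of the $n$ that appear there, with $\X=X_{[1,k-1]}^{[n-k+2,n]}$ and $\Y=Y_{[n-k+2,n]}^{[n-k+2,n]}$ playing the roles of the blocks in \eqref{Phi}. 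Thus the combinatorics of trailing principal minors is structurally identical to the one in Proposition \ref{tallmatrix}, with $n$ replaced by $k$ throughout.

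Next I would run the same elimination procedure: a sequence of $k-2$ left multiplications by block T\"oplitz (then block) upper-triangular unipotent matrices, each using a submatrix $\Psi_j=\Phi^{(k)}{}_{[jn+1,(k-1)(n-1)]}^{[jn+1,(k-1)(n-1)]}$ (using the labeling convention $\tilde\phhi_{jn+1}=\det\Psi_j$, which matches the indices $\tilde\phhi_{jn+1}$ in the statement), to clear the block row immediately above $\Psi_j$. The $j$th step multiplies every $X$-block on the left by a single unipotent upper-triangular matrix $G_j=G_{j1}\cdots G_{11}$ whose only modified row is the $j$th; the denominator introduced at step $j$ is exactly $\det\Psi_j=\tilde\phhi_{jn+1}$, which gives property (i). The resulting matrix is block lower triangular on the relevant index range, so its trailing principal minors are unchanged, and reading off the minors of $S=\begin{bmatrix}Y^{[n-k+2,n]}_{[n-k+2,n]}\\ GX_{[1,k-1]}^{[n-k+2,n]}\end{bmatrix}$ with $G=G_{k-2,1}\cdots G_{11}$ yields the identities \eqref{bandtallmatrixminors}, exactly as \eqref{tallmatrixminors} was obtained from \eqref{nablaPhi}. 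This is the content of the phrase in the text ``can be proved in exactly the same way,'' so the proof is genuinely a transcription with $n\rightsquigarrow k$ and $N=(n-1)n\rightsquigarrow (k-1)(n-1)$.

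The one point that needs a moment of care — and the place I'd expect the only real friction — is verifying that the block structure of $\Phi^{(k)}$ really is the bidiagonal one needed, i.e. that after restricting to $\L_{kn}$ the off-diagonal $Y$-blocks and diagonal $X$-blocks vanish in precisely the pattern that makes $\Phi^{(k)}$ look like a ``size-$(k-1)$'' copy of \eqref{Phi}. This is a direct consequence of the shape \eqref{XYband}: $X$ has its only nonzero entries in a $(k-1)\times(k-1)$ upper-triangular block, and the relevant corner of $Y$ is the $(k-1)\times(k-1)$ lower-triangular block of its top $k+1$ diagonals; a short index chase confirms that the trailing $(k-1)(n-1)$ rows and columns of $\Phi$ form a block-bidiagonal matrix with these two blocks repeated $k-1$ times. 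Once this identification is in place, the minor identities \eqref{nablaPhiEntries}-style cancellations and the elimination argument carry over without change, and one concludes exactly as in Proposition \ref{tallmatrix}.
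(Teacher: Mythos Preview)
Your overall strategy—run the very same elimination procedure as in Proposition~\ref{tallmatrix}, stop after $k-2$ steps, and read off the minor identities—is exactly what the paper intends. But the structural picture you sketch of $\Phi^{(k)}$ is wrong, and it is inconsistent with the (correct) indices you use later in the same paragraph.

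You claim that $\Phi^{(k)}$ is a miniature copy of \eqref{Phi} with $(k-1)\times(k-1)$ blocks $\X=X_{[1,k-1]}^{[n-k+2,n]}$, $\Y=Y_{[n-k+2,n]}^{[n-k+2,n]}$ and ``$n$ replaced by $k$ throughout.'' That cannot be: $\Phi^{(k)}$ has size $(k-1)(n-1)\times(k-1)(n-1)$, not $k(k-1)\times k(k-1)$, so it is not assembled from $(k-1)\times(k-1)$ blocks in the pattern of \eqref{Phi}. Your own choice $\Psi_j=(\Phi^{(k)})_{[jn+1,(k-1)(n-1)]}^{[jn+1,(k-1)(n-1)]}$ and the target ratio $\tilde\phhi_{jn-i+1}/\tilde\phhi_{jn+1}$ still carry the period $n$; if the ``$n\to k$'' picture were right, these would sit at positions $jk+1$. (Also, $\Phi^{(k)}$ is the \emph{leading} $(k-1)(n-1)$ block, not the trailing one.)

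What actually happens is that $\Phi^{(k)}$ is the upper-left $(k-1)(n-1)$ block of the band-restricted $\Phi$, still built from the $(n-1)\times n$ blocks $X_{[2,n]}$, $Y_{[2,n]}$. One runs the identical elimination of Proposition~\ref{tallmatrix} on $\Phi$ (or equivalently on $\Phi^{(k)}$), using the same $(n-1)\times(n-1)$ block row operations, but only the first $k-2$ steps are needed; step $j$ introduces the denominator $\det\Psi_j=\tilde\phhi_{jn+1}$, giving~(i). The band shape \eqref{XYband} is what forces the nonzero content of $GX_{[2,n]}$ and of the relevant portion of $Y$ to live in $(k-1)\times(k-1)$ corners, so that the dense minors one reads off are minors of the small matrix $S$ in the statement, yielding~(ii). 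So the transcription is in the number of elimination steps and in the sizes of the output $G$ and $S$, not in the ambient block size of $\Phi$.
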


\begin{proposition}
\label{bandlongmatrix}
There exists a $(k-1)\times (k-1)$ unipotent lower triangular matrix $H=H(X, Y)$ such that

{\rm (i)} entries of $H$ are rational functions in $X$, $Y$ whose denominators are monomials in cluster variables 
$\tilde\phhi_{j (n-1) +1}$, $j=1,\ldots, k-2$, and 

{\rm (ii)} the $(n-1)\times (n+k-1)$ band matrix 
\[
T=\begin{bmatrix} X_{[2,n]}^{[n-k+2,n]} & Y_{[2,n]}^{[1,n-k+1]} & Y_{[2,n]}^{[n-k+2,n]}H   \end{bmatrix}
\] 
satisfies
\begin{equation}
\label{bandlongmatrixminors}
 \det T_{[ n-i, n-1]}^{[n+j-i+1 , n+j]} = \frac{\tilde\phhi_{(n-j) (n -1) - i +1 }} {\tilde\phhi_{(k-j) (n-1) +1}}, 
\qquad j=2,\ldots, k-1, \quad i=1,\ldots, n-1.
\end{equation}
\end{proposition}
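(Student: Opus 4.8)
The plan is to transcribe the proof of Proposition~\ref{longmatrix} (which itself runs parallel to that of Proposition~\ref{tallmatrix}), changing only the combinatorial bookkeeping dictated by the band shape~\eqref{XYband}. First I would substitute a pair $(X,Y)\in\L_{kn}$ into $\Phi$ as in~\eqref{Phi}; since $X$ is supported on a $(k-1)\times(k-1)$ corner block and $Y$ is lower triangular with $k+1$ non-zero diagonals, $\Phi$ becomes block lower triangular with a single irreducible leading block $\Phi^{(k)}$ of size $(k-1)(n-1)$, whose trailing principal minors are exactly the functions $\tilde\phhi_i=\tilde\phhi_i^{(k)}$ of~\eqref{tildephi}. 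All subsequent manipulations take place inside $\Phi^{(k)}$, which here plays the role played by the full matrix $\Phi$ in Proposition~\ref{longmatrix}.

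Next I would run a sequence of $k-2$ elementary transformations; the $j$-th of them uses the trailing principal submatrix $\Psi_j=(\Phi^{(k)})_{[j(n-1)+1,(k-1)(n-1)]}^{[j(n-1)+1,(k-1)(n-1)]}$ to clear, by right column operations, the entries of $\Phi^{(k)}$ lying in the block of $n-1$ columns immediately to the left of $\Psi_j$. Exactly as in the proof of Proposition~\ref{tallmatrix}, this amounts to right-multiplying the $Y$-blocks of $\Phi^{(k)}$ by a block-T\"oplitz upper triangular matrix whose non-trivial $(n-1)\times(n-1)$ blocks have a single non-zero row; it leaves all trailing principal minors of $\Phi^{(k)}$ unchanged, and it introduces only denominators that are powers of $\det\Psi_j=\tilde\phhi_{j(n-1)+1}$. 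One also checks, as before, that these operations preserve the corner/band pattern of $X$ and $Y$, so that at every stage the transformed matrix is still assembled from the blocks $X_{[2,n]}^{[n-k+2,n]}$, $Y_{[2,n]}^{[1,n-k+1]}$ and modified copies of $Y_{[2,n]}^{[n-k+2,n]}$. The accumulated product $H=H_{k-2}\cdots H_1$ is then a $(k-1)\times(k-1)$ unipotent lower triangular matrix satisfying~(i).

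Finally, to obtain~(ii) I would note that after all $k-2$ steps the relevant submatrix of the transformed $\Phi^{(k)}$ is block lower triangular, so the dense minors of $T=[\,X_{[2,n]}^{[n-k+2,n]}\ \ Y_{[2,n]}^{[1,n-k+1]}\ \ Y_{[2,n]}^{[n-k+2,n]}H\,]$ that meet its last block column can be identified, via Cauchy--Binet applied inside $\Phi^{(k)}$, with ratios of trailing principal minors of $\Phi^{(k)}$, precisely the way the minors of the tall matrix $S$ were identified in~\eqref{tallmatrixminors}. I expect this last identification to be the only genuinely delicate point: one has to match the index shifts, verify the ranges $j=2,\dots,k-1$ and $i=1,\dots,n-1$, and confirm that the two subscripts $(n-j)(n-1)-i+1$ and $(k-j)(n-1)+1$ appearing in~\eqref{bandlongmatrixminors} are the correct ones, using where needed the coincidences among the $\tilde\phhi_m$ forced by the band structure (such as $\tilde\phhi_{(k-2)(n-1)+i}=a_{1,i+1}\cdots a_{1n}$, as in the proof of Proposition~\ref{log-can-band}). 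Everything else is a routine copy of the proofs of Propositions~\ref{tallmatrix} and~\ref{longmatrix}.
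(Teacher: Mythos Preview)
Your proposal is correct and takes essentially the same approach as the paper, which in fact gives no independent argument at all: it simply states that Propositions~\ref{bandtallmatrix} and~\ref{bandlongmatrix} ``are analogous to Propositions~\ref{tallmatrix}, \ref{longmatrix} and can be proved in exactly the same way.'' Your outline---work inside the irreducible block $\Phi^{(k)}$, perform $k-2$ successive column operations using the trailing blocks $\Psi_j$ with $\det\Psi_j=\tilde\phhi_{j(n-1)+1}$, accumulate these into a unipotent lower triangular $H$, and then read off the minor identities from the resulting block-triangular shape---is precisely the intended adaptation, and your caveat about tracking the index shifts in~\eqref{bandlongmatrixminors} is the only place where any genuine bookkeeping is needed.
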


The completeness statement for $\GCC(\L_{kn})$ is based on the following result.

\begin{proposition}
\label{bandmatrixentries}
All matrix entries $a_{ij}$, $i=2,\ldots, k$, $j=2,\ldots n$, are cluster variables in $\GCC(\L_{kn})$.
\end{proposition}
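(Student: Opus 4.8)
The plan is to mirror the proof of Proposition~\ref{matrixentries} essentially line by line, with Propositions~\ref{tallmatrix} and~\ref{longmatrix} replaced by Propositions~\ref{bandtallmatrix} and~\ref{bandlongmatrix}, the auxiliary matrices of sizes $(2n-1)\times n$ and $(n-1)\times 2n$ replaced by the matrices $S$ of size $(2k-2)\times(k-1)$ and $T$ of size $(n-1)\times(n+k-1)$ supplied by those propositions, and $\NGCC_n^D$ replaced by $\GCC(\L_{kn})$. First I would split the range $2\le i\le k$, $2\le j\le n$ into the entries $a_{ij}$ that occur among the entries of the corner matrix $S$ and those that occur among the entries of the wide band matrix $T$; a direct inspection of the shapes in~\eqref{XYband} shows that these two families exhaust the required range.

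For the entries of $S$: temporarily declare the vertices of $Q_{kn}$ carrying $\tilde\phhi_{jn+1}$, $j=0,\ldots,k-2$, to be frozen (so that, in particular, the special vertex becomes frozen and the vertex carrying $\tilde a_{11}$ becomes isolated), call the quiver on the remaining non-isolated vertices $\widehat Q_{kn}$ and the corresponding subfamily $\widehat\FFF_{kn}$. Rescale by $\tilde\phhi_{jn-i+1}\mapsto\tilde\phhi_{jn-i+1}/\tilde\phhi_{jn+1}$ as in Proposition~\ref{bandtallmatrix}, delete the dotted edges of $Q_{kn}$, and use the minor identities~\eqref{bandtallmatrixminors} to identify the resulting seed $(\tilde\FFF_{kn},\tilde Q_{kn})$ with the initial seed of the standard cluster structure on $\Mat_{(2k-2)\times(k-1)}$, the rescaled function at the $(i,j)$ grid vertex being the dense minor of $S$ with top-left corner at position $(i,j)$. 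As in Proposition~\ref{matrixentries}, every exchange relation of this seed is the corresponding exchange relation of $\GCC(\L_{kn})$ divided by a monomial in the frozen $\tilde\phhi_{jn+1}$; hence, by repeated use of \cite[Lemma~8.4]{GSVMem}, applying to $\GCC(\L_{kn})$ the sequence of mutations (avoiding the frozen $\tilde\phhi_{jn+1}$) that produces a prescribed entry of $S$ as a cluster variable in $\Mat_{(2k-2)\times(k-1)}$ yields a cluster variable of $\GCC(\L_{kn})$ of the form $a_{ij}M$ with $M$ a Laurent monomial in the $\tilde\phhi_{jn+1}$. To see that $M=1$ one argues exactly as before: all factors in $M$ have nonnegative degree because the cluster variable is a regular function and the members of $\FFF_{kn}$ are irreducible --- here I would invoke the band analog of \cite[Lemma~4.2]{GSVstaircase} --- while, viewed as a polynomial in each $\tilde\phhi_{jn+1}$, the cluster variable has a nonzero constant term (clear for a single mutation off the initial cluster, then by induction). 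Running the same argument with $T$, Proposition~\ref{bandlongmatrix}, the standard cluster structure on the corresponding space of $(n-1)\times(n+k-1)$ band matrices, and the extra temporary freezing that parallels the freezing of the $h_{2j}$-vertices in the second half of the proof of Proposition~\ref{matrixentries}, disposes of the remaining entries $a_{ij}$.

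The main obstacle is the same bookkeeping that is the crux of Proposition~\ref{matrixentries}: checking that, after freezing the $\tilde\phhi_{jn+1}$ (resp.\ the $\tilde\phhi_{j(n-1)+1}$) and rescaling, deleting the dotted edges really does turn the mesh of consistently oriented triangles in $Q_{kn}$ into the standard matrix quiver, and that the identities~\eqref{bandtallmatrixminors} and~\eqref{bandlongmatrixminors} attach the rescaled functions to the correct grid vertices --- together with the final verification that $M=1$. Everything else is a verbatim transcription of the $D(GL_n)$ argument, the only genuinely new inputs being Propositions~\ref{bandtallmatrix} and~\ref{bandlongmatrix} and the irreducibility of the members of $\FFF_{kn}$.
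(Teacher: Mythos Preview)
Your overall strategy is right, but the comparison you propose for the matrix $S$ does not go through, and this is a genuine gap rather than just bookkeeping.

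After freezing only the anti-diagonal $\tilde\phhi_{jn+1}$, $j=0,\dots,k-2$, the remaining mutable part of $Q_{kn}$ cannot be identified with the standard quiver for $\Mat_{(2k-2)\times(k-1)}$: the grid of non-frozen vertices in $Q_{kn}$ has $(k-1)(n-1)$ vertices, so after removing the $k-1$ frozen ones you are left with $(k-1)(n-2)$ mutable vertices, whereas the rectangular quiver for $\Mat_{(2k-2)\times(k-1)}$ has $(2k-2)(k-1)$ vertices; these agree only when $n=2k$. More to the point, $S$ is not a generic $(2k-2)\times(k-1)$ matrix: its top block $Y_{[n-k+2,n]}^{[n-k+2,n]}$ is lower triangular and its bottom block $G X_{[1,k-1]}^{[n-k+2,n]}$ is upper triangular, so $S$ is a band matrix with $k$ diagonals and many of the dense minors that would serve as initial cluster variables for $\Mat_{(2k-2)\times(k-1)}$ vanish identically. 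Hence the seed you write down is not the initial seed of any cluster structure on full rectangular matrices.

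The paper handles this by comparing with cluster structures on \emph{band} matrices (built in \cite{quasichris} via a quasi-isomorphism with a Grassmannian), not on full rectangular matrices, for \emph{both} auxiliary comparisons. For the $T$-part one freezes the top row $\tilde\phhi_{j(n-1)+1}$, $j=1,\dots,k-2$, and identifies the resulting seed (after rescaling) with the initial seed for $\CC(\L'_{k,n-1})$ on $(n-1)\times(n+k-1)$ band matrices; this already recovers all $a_{ij}$ except those with $(i,j)\in R=\{(l,m): 3\le l\le k,\ n-l+3\le m\le n\}$, since $t_{ij}=a_{ij}$ for $(i,j)\notin R$ while the entries of $Y_{[2,n]}^{[n-k+2,n]}H$ are not $a$'s. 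For the $S$-part one must freeze not just the anti-diagonal $\tilde\phhi_{jn+1}$ but the entire strip between it and the second anti-diagonal $\tilde\phhi_{(k-2)(n-1)-jn}$, $j=0,\dots,k-3$; only then does the surviving subquiver $\hat Q_{kn}$ match the initial quiver for $\CC(\L'_{2k-2,k-1})$ on $(2k-2)\times(k-1)$ band matrices with $k$ diagonals, and the argument with \cite[Lemma~8.4]{GSVMem} and irreducibility can be run to recover the remaining $a_{ij}$ with $(i,j)\in R$. Your outline for the $T$-part is essentially correct once you drop the spurious analogue of the $h_{2j}$-freezing, but the $S$-part needs both the larger freezing and the band-matrix auxiliary structure.
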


\begin{proof} We use an argument similar to that in the proof of Proposition \ref{matrixentries}. Namely, we will temporarily freeze certain subsets of vertices in $Q_{kn}$ and compare the result with initial seeds of appropriate previously studied cluster structures. First, freeze the vertices in the top row of $Q_{kn}$, that is, those that correspond to 
$\tilde\phhi_{j (n-1) +1}$, $j=1,\ldots, k-2$. Then the vertices that correspond to $a_{11}$, $a_{12}$, $a_{k+1,1}$, 
$a_{k+1,n}$ become isolated. The subquiver $\tilde Q_{kn}$ of $Q_{kn}$ formed by the remaining vertices is closely related to an initial quiver $Q'_{k,n-1}$ for the regular cluster structure  $\CC(\L'_{k,n-1})$ on the space $\L'_{k,n-1}$ of 
$(n-1)\times (n+k-1)$ band matrices with $k+1$ diagonals that was constructed in \cite[Section 10]{quasichris}  
 via a quasi-isomorphism from the regular
cluster structure on the affine cone $\widehat{\Gr}(n-1,n+k-1)$
over the Grassmannian $\Gr(n-1,n+k-1)$. The difference is that in  $Q'_{k,n-1}$ there are no edges between the vertices in the top row and vertices in the bottom row. Let $T$ be an element in $\L'_{k,n-1}$:
\[
T= \left [
\begin{array}{ccccccc}
t_{12} & t_{22} & \cdots &t_{k+1,2} & 0 & \cdots & 0\\
0 & t_{13} & t_{23}  & \cdots & t_{k+1,3} & 0 & \vdots \\
\vdots & \ddots & \ddots & \ddots & \cdots& \ddots & \ddots\\
0 & \cdots &0 & t_{1n} & t_{2n} &\cdots & t_{k+1,n}
\end{array}
\right ];
\]
note that $t_{ij} = T_{j-1, i+j-2}$. Initial cluster variables that correspond to $Q'_{k,n-1}$ in $\CC(\L'_{k,n-1})$ are functions $\psi_{ij}(T)$, $i=2,\ldots, k$, $j=2,\ldots, n$, where $\psi_{ij}(T)$ is the maximal dense minor of $T$ with 
$t_{ij}$ in the upper left corner, and matrix entries $t_{1j}$, $j=3,\ldots, n$, $t_{k+1,j}$, $j=2,\ldots, n-1$.
The latter are  frozen and attached to the same vertices in $Q'_{kn}$  that $a_{1j}$, $j=3,\ldots, n$, and 
$a_{k+1,j}$, $j=2,\ldots, n-1$, are attached in $\tilde Q_{kn}$. The function $\psi_{ij}(T)$ is  attached to the 
vertex $(i-1,j)$ of  $Q'_{kn}$. In addition to the frozen variables mentioned above, the variables  attached to 
the vertices of  the first row of $Q'_{kn}$ are also frozen. {\em All\/} the irreducible row-dense minors of $T$ are cluster  variables in $\CC(\L'_{k,n-1})$.

Note that in \cite{quasichris} an initial seed for $\CC(\L'_{k,n-1})$ is not described explicitly. To justify our explicit description of the seed above we rely on two observations. First, the functions $\psi_{ij}(T)$ are images under the quasi-isomorphism of  \cite{quasichris} of cluster variables of the initial seed for ${\widehat \Gr}(n-1,n+k-1)$ as described in \cite[Chapter 4]{GSVb}. This means that subquivers formed by non-frozen vertices in  the initial quivers for these two structures coincide, the only difference is in the arrows that connect  frozen and  non-frozen variables. Second, the edges between frozen and non-frozen variables in the initial quiver for  $\CC(\L'_{k,n-1})$ are uniquely determined by the regularity of this cluster structures. To see this, one needs to analyze, in a bottom to top order, the exchange relations for left-most and right-most mutable vertices in $Q'_{k,n-1}$ and apply the standard Desnanot--Jacobi identities.

Now, assume that $T\in \L'_{k,n-1}$ is the matrix defined in Proposition \ref{bandlongmatrix}. Then it follows from 
\eqref{bandlongmatrixminors} that $t_{1j}=a_{1j}$ for $j=3,\ldots, n$, $t_{k+1,j}=a_{k+1,j}$ for $j=2,\ldots, n-1$ and
 $\psi_{ij}(T) = \frac{\tilde\phhi_{(k-i) (n -1) + j -1 }} {\tilde\phhi_{(k-i+1) (n-1) +1}}$ for $i=2,\ldots, k$, 
$j=2,\ldots, n$. Then, just like in the proof of Proposition \ref{matrixentries}, we conclude that sequences of mutations 
in $\GCC(\L_{kn})$ that do not involve functions $\tilde\phhi_{j (n-1) +1}$, $j=1,\ldots, k-1$, result in corresponding sequences of mutations in $\CC(\L'_{k,n-1})$ with the initial seed defined by $Q'_{kn}$ and functions 
$t_{1j}$, $t_{k+1,j}$, $\psi_{ij}(T)$. Since every $t_{ij}$ is a cluster variable in $\CC(\L'_{k,n-1})$ and $t_{ij} = a_{ij}$ unless $(i,j) \in R= \{ (l,m), \ l=3,\ldots, k,\  m = n-l+3, \ldots, n \}$, we use the argument in the proof of 
Proposition \ref{matrixentries} to conclude that, for $(i,j) \notin R$, $a_{ij}$ is a cluster variable in $\GCC(\L_{kn})$.

The case of $(i,j) \in R$ is treated in a similar way. Namely, consider the vertices corresponding to 
$\tilde\phhi_{j n +1}$, $j=0,\ldots, k-2$, and to $\tilde\phhi_{(k-2)(n-1)-jn}$, $j=0,\ldots,k-3$, 
in $Q_{kn}$. The vertices in the first set form an anti-diagonal that starts in the upper right corner 
of the grid formed by non-frozen vertices of $Q_{kn}$, and the vertices in the second set lie
immediately below the anti-diagonal that starts in the lower left corner of this grid. 
Let us temporarily freeze  the vertices in both sets as well as all the vertices lying between them. 
The quiver $\hat Q_{kn}$ obtained by deleting all isolated vertices is, once again, similar to the quiver of the 
initial seed for the cluster structure $\CC(\L'_{2k-2,k-1})$ on  a set of finite band matrices, 
this time $(2k -2) \times (k-1)$ matrices with $k$ diagonals. To see this, one just needs to move the vertices of 
$\hat Q_{kn}$ around in a way illustrated in Figure \ref{Q57_red} for the case $n=7, k=5$.

\begin{figure}[ht]
\begin{center}
\includegraphics[width=8cm]{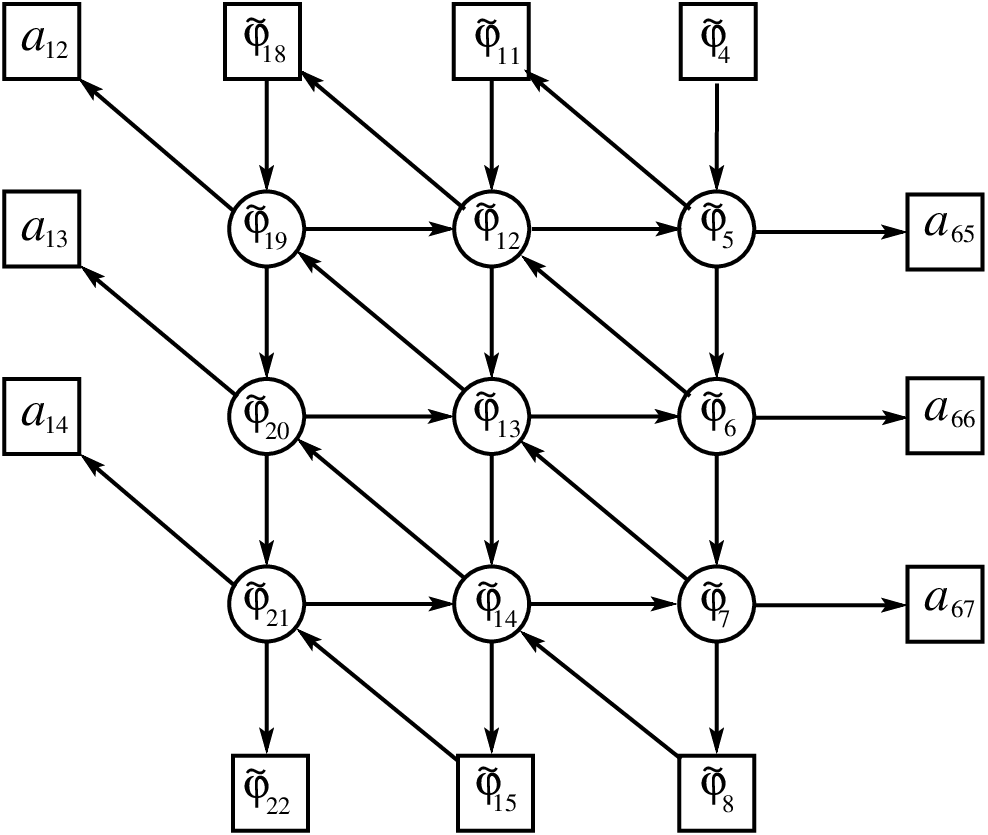}
\end{center}
\caption{Quiver $\hat Q_{57}$}
\label{Q57_red}
\end{figure}

The latter quiver is isomorphic to $Q'_{k-1,2k-2}$, but we denote it by $Q^T_{k-1,2k-2}$ to reflect the fact  that the initial seed for  $\CC(\L'_{2k-2,k-1})$ can be easily obtained from the one for $\CC(\L'_{k-1, 2k-2})$ via transposition. 
To obtain $Q^T_{k-1,2k-2}$ from $\hat Q_{kn}$  one simply needs to erase vertices in the bottom row that correspond 
to functions $\tilde\phhi_{j n +1}$, $j=1,\ldots, k-2$. Then the functions 
\[
\frac{\tilde\phhi_{j n - i +1 }} {\tilde\phhi_{j n +1}}, \qquad j=1,\ldots, k-2,\quad i=1,\ldots, k-1
\]
 are subject to exchange relations in  $\CC(\L'_{2k-2,k-1})$. At the same time, by Proposition~\ref{bandtallmatrix}, these functions represent the minors of  the band matrix $A$ that, together with the frozen variables
$a_{1i}$, $i=2,\ldots,k-1$, and $a_{k+1,i}$, $i=n-k+3,\ldots,n$, form an initial seed for 
$\CC(\L'_{2k-2,k-1})$. Then the argument concludes exactly as above.

\end{proof}

To establish completeness of $\GCC(\L_{kn})$, it now remains to show that $a_{i1}$, $i=2,\ldots, k$, belong to the  generalized upper cluster algebra $\UU(\GCC(\L_{kn}))$.  Since these are the entries of the first row of $X$ as defined in \eqref{XYband}, one applies a modification of  
Lemma \ref{firstrow} and its proof. To this end, we replace the system \eqref{TheSystem} with 
\begin{equation}
\label{TheBandSystem}
\tilde c_j(X,Y) = \tilde c_j(\bar X, Y)  + \sum_{i=1}^k a_{i1} z_{ij}(\bar X, Y), 
\end{equation}
where $\bar X$ is defined as in the proof of Lemma \ref{firstrow}. The implication $\det Z=\det\left (z_{ij}(\bar X, Y)\right )_{i,j=1}^k=0 \Rightarrow \tilde\phhi_1=0$ is established via the same reasoning as before, except now $XY^{-1} =\begin{bmatrix} U & \star \\ 0 & 0\end{bmatrix}$, where the block $U$ is $k\times k$ and \eqref{Schur} implies that $ \det \left [U^{k-1}e_1 U^{k-2}e_1 \cdots U e_1\ e_1 \right ]=0$. As before,  \cite[Lemma 3.3]{GSVstaircase} states that the determinant in the last equation is a nonzero multiple of $\tilde\phhi_1$ and the desired implication is confirmed. The rest of the arguments in the proof of Lemma \ref{firstrow} transfer to the current situation in a straightforward way.

\section*{Acknowledgments}

Our research was supported in part by the NSF research grant DMS \#1702054 (M.~G.), NSF research grant DMS \#1702115 
and International Laboratory of Cluster Geometry NRU HSE, RF Government grant, ag.~\# 075-15-2021-608 from 08.06.2021 (M.~S.), 
and ISF grant \#1144/16 (A.~V.). While working on this project, we benefited from support of the following institutions 
and programs: Research Institute for Mathematical Sciences, Kyoto (M.~G., M.~S., A.~V., Spring 2019), Research in Pairs Program at the Mathematisches Forschungsinstitut Oberwolfach (M.~S. and A.~V., Summer 2019), Istituto Nazionale di Alta Matematica Francesco Severi and the Sapienza University of Rome (A.~V., Fall 2019), University of Haifa 
(M.~G., Fall 2019), Mathematical Science Research Institute, Berkeley (M.~S., Fall 2019),
Michigan State University (A.~V., Spring 2020), University of Notre Dame (A.~V., Spring 2020). 
We are grateful to all these institutions for their hospitality and outstanding working conditions they provided. Special thanks are due to Peigen Cao and Fang Li  who in response to our request provided a generalization \cite{CL2} of their previous results, to Linhui Shen for pointing out to us the preprint \cite{Zhou}, to Alexander Shapiro and Gus Schrader for many fruitful discussions, and to the reviewers for constructive suggestions.

\end{document}